\documentclass[a4paper, 10pt, twoside]{article}

\usepackage{amsmath, amscd, amsfonts, amssymb, amsthm,
latexsym, url, color, todonotes, bm, framed, rotating,
enumerate} 
\usepackage{graphicx}
\usepackage[left=1in, right=1in, top=1.2in, bottom=1in, includefoot, headheight=13.6pt]{geometry}
\usepackage{mathtools}
\input{xy}
\xyoption{all}

\usepackage[colorlinks,breaklinks=true]{hyperref}
\usepackage[figure,table]{hypcap}
\hypersetup{
	bookmarksnumbered,
	pdfstartview={FitH},
	citecolor={black},
	linkcolor={black},
	urlcolor={black},
	pdfpagemode={UseOutlines}
}
\makeatletter
\newcommand\org@hypertarget{}
\let\org@hypertarget\hypertarget
\renewcommand\hypertarget[2]{%
  \Hy@raisedlink{\org@hypertarget{#1}{}}#2%
}
\makeatother

\setlength{\marginparwidth}{0.8in}

\newtheorem{theorem}{Theorem}[section]
\newtheorem{lemma}[theorem]{Lemma}
\newtheorem{corollary}[theorem]{Corollary}
\newtheorem{proposition}[theorem]{Proposition}

\theoremstyle{definition}
\newtheorem{definition}[theorem]{Definition}
\newtheorem{definition/proposition}[theorem]{Definition/Proposition}
\newtheorem{remark}[theorem]{Remark}

\newtheorem{conjecture}[theorem]{Conjecture}
\newtheorem{question}[theorem]{Question}

\newcommand{\xysquare}[8]{
\[\xymatrix{
#1 \ar@{#5}[r] \ar@{#6}[d] & #2 \ar@{#7}[d]\\
#3 \ar@{#8}[r] & #4
}\]
}

\newcommand{\bb}{\mathbb}

\renewcommand{\phi}{\varphi}

\newcommand{\roi}{\mathcal{O}}

\newcommand{\xto}{\xrightarrow}

\renewcommand{\cal}{\mathcal}
\renewcommand{\hat}{\widehat}
\renewcommand{\frak}{\mathfrak}
\newcommand{\indlim}{\varinjlim}

\renewcommand{\ker}{\operatorname{Ker}}

\DeclareMathOperator{\codim}{codim}

\DeclareMathOperator{\dlog}{dlog}

\DeclareMathOperator{\Gal}{Gal}

\DeclareMathOperator{\Hom}{Hom}

\DeclareMathOperator{\Spec}{Spec}

\DeclareMathOperator{\Val}{Val}

\newcommand{\CH}{C\!H}

\def\T{{\frak T}}
\def\et{{\mathrm{\acute{e}t}}}
\def\t{{\mathrm{t}}}
\def\Nis{{\mathrm{Nis}}}
\def\Cat{{\mathrm{Cat}}}



\newcommand{\rcoeq}[3]{\xymatrix{ #1\ar@/^3mm/[r]^f \ar@/_3mm/[r]_g & #2 \ar[l]_e\ar[r] & #3}}

\usepackage[bbgreekl]{mathbbol}
\DeclareSymbolFontAlphabet{\mathbbm}{bbold}

\usepackage{fancyhdr}

\pagestyle{fancy}
\fancyhead{}
\fancyfoot[C]{\thepage}
\fancyhead[EL, OR]{}
\fancyhead[EC]{Morten L\"uders}
\fancyhead[OC]{Tame Tate twists}

\usepackage{sectsty}
\sectionfont{\Large\sc\centering}
\chapterfont{\large\sc\centering}
\chaptertitlefont{\LARGE\centering}
\partfont{\centering}

\begin{document}
\itemsep0pt

\title{$p$-adic tame Tate twists}

\author{Morten L\"uders}

\date{}

\maketitle

\begin{abstract} 
Recently, Hübner-Schmidt defined the tame site of a scheme. We define $p$-adic tame Tate twists in the tame topology and prove some first properties. We establish a framework analogous to the Beilinson-Lichtenbaum conjectures in the tame topology for $p$-adic tame Tate twists and tame logarithmic deRham-Witt sheaves. Both only differ from their \'etale counterpart in cohomological degrees above the weight. These cohomology groups can be analysed using the Gersten conjecture which, at least conjecturally, has a nice shape in the tame topology. We prove the Gersten conjecture for tame logarithmic deRham-Witt sheaves for curves in positive characteristic and note that the conjecture in arbitrary dimension would follow from strict $\bb A^1$-invariance.
\end{abstract}

\tableofcontents

\section{Introduction}
In \cite{HuebnerSchmidt2021}, Hübner-Schmidt define the tame site of a scheme. The tame site is finer than the Nisnevich site but coarser than the \'etale site and if $X$ is a scheme over a fixed base scheme $S$, then there are natural morphisms of sites
$$X_{\et}\xrightarrow{\alpha}(X/S)_t \xrightarrow{\beta} X_{\Nis} \quad \quad X_{\et}\xrightarrow{\epsilon}X_{\Nis},$$ the tame site of $X$ over $S$ being denoted by $(X/S)_t$. In this article we study how various $p$-adic cohomology theories change when restricted along $\alpha$ and $\beta$. Hübner-Schmidt show that the \'etale cohomology groups of $X$ coincide with the tame cohomology groups of $X$ if $X$ is proper over $S$ or if the coefficients are invertible on $S$. Discarding wild phenomena, the tame site has some nice properties which do not hold for the \'etale site: assuming resolution of singularities, the tame cohomology groups are homotopy invariant for all locally constant torsion sheaves in positive characteristic, and one expects full and not just semi-purity for logarithmic deRham-Witt sheaves, the case of dimension one being shown in \cite{Huebner2023}.
Logarithmic deRham-Witt sheaves give the right $p$-adic \'etale motivic cohomology theory in characteristic $p>1$, and naturally are sheaves in the tame topology. The $p$-adic \'etale Tate twists give the right $p$-adic \'etale motivic cohomology in mixed characteristic $(0,p)$.
One of the goals of this article is to define and study their tame analogue by pushing them forward to the tame site. 
In the Zariski and Nisnevich topology, motivic cohomology can be defined using cycle complexes and in the following, for a scheme $X$ of finite type over a base scheme $S$ which is the spectrum of a field or of a discrete valuation ring, we denote by $\bb Z(n)_X$ the motivic complexes defined by Bloch and Levine \cite{Bl86,Le01}. Since $p$-adic \'etale Tate twists are constructed by glueing vanishing cycles with logarithmic deRham-Witt sheaves, we begin with a detailed study of the  characteristic $p$ situation. Our first result reads as follows:
\begin{theorem}
Let $k$ be a perfect field of characteristic $p>0$ and $X$ be a smooth $k$-scheme. We denote by $W_r\Omega_{X,\log,t}^n$ the logarithmic deRham-Witt sheaves in the tame topology.
\begin{enumerate}
\item (Beilinson-Lichtenbaum; Prop. \ref{proposition_BL_dRW}) There is an isomorphism
$$\bb  Z(n)_X/p^r \stackrel{\cong}{\longrightarrow}\tau_{\leq n}R\beta_*(W_r\Omega_{X,\log,t}^n[-n])$$
in $D^b(X_{\Nis},\bb Z/p^r\bb Z).$
\item ((Semi-)Purity; Prop. \ref{proposition_semi_purity_logdeRhamWiss} and \ref{proposition_purity_curves}) Let $i : Z \hookrightarrow X$ be a locally closed regular subscheme
          of characteristic $p$ and of codimension
          $c \,(\ge 1)$.
         Then there is a Gysin isomorphism
$$
     W_r\Omega_{Z,\log,t}^{n-c} \stackrel{\cong}{\longrightarrow} \tau_{\leq 0} R i^!W_r\Omega_{X,\log,t}^{n}     
$$
in $D^b((Z/\Spec(k))_{t},\bb Z/p^r\bb Z)$. If $\dim X=1$, then full purity holds, i.e. there is an isomorphism $
     W_r\Omega_{Z,\log,t}^{n-c} \xto{\cong} R i^!W_r\Omega_{X,\log,t}^{n}$ in $D^b((Z/\Spec(k))_{t},\bb Z/p^r\bb Z).$
\end{enumerate}
\end{theorem}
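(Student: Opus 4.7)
The strategy for both parts is to reduce the tame statements to their well-known étale counterparts through the factorisation $\epsilon = \beta\circ\alpha$ of morphisms of sites, and to track the discrepancy via $R\alpha_*$.

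\medskip

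\noindent\emph{(i) Beilinson-Lichtenbaum.} I would start from the classical Geisser-Levine isomorphism in the Nisnevich topology,
$$\bb Z(n)_X/p^r \isoto \tau_{\leq n}R\epsilon_{*}(W_r\Omega^n_{X,\log}[-n])$$
in $D^b(X_{\Nis},\bb Z/p^r\bb Z)$, and use $R\epsilon_* = R\beta_*\circ R\alpha_*$. Presuming that the tame logarithmic deRham-Witt sheaf has been set up earlier in the paper so that $W_r\Omega^n_{X,\log,t} = \alpha_*W_r\Omega^n_{X,\log}$, the adjunction unit gives a canonical map $W_r\Omega^n_{X,\log,t} \To R\alpha_{*}W_r\Omega^n_{X,\log}$ whose cofibre has cohomology concentrated in degrees $\geq 1$, namely the sheaves $R^{\geq 1}\alpha_*W_r\Omega^n_{X,\log}$. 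After shifting by $[-n]$ the cofibre lives in degrees $\geq n+1$; applying $R\beta_*$ can only raise this lower bound, so $\tau_{\leq n}$ annihilates it. Hence $\tau_{\leq n}R\beta_*(W_r\Omega^n_{X,\log,t}[-n])\isoto \tau_{\leq n}R\epsilon_*(W_r\Omega^n_{X,\log}[-n])$, and composition with Geisser-Levine yields the asserted iso.

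\medskip

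\noindent\emph{(ii) Semi-purity.} For the closed immersion $i\colon Z\into X$ of codimension $c$, I would combine the étale Gysin iso of Gros-Shiho,
$$W_r\Omega^{n-c}_{Z,\log}[-c] \isoto Ri^{!}_{\et}W_r\Omega^n_{X,\log},$$
with a base-change identity $R\alpha^Z_{*}\circ Ri^!_{\et} \isoto Ri^!_{t}\circ R\alpha^X_{*}$ coming from the compatibility of the tame and étale sites under closed immersions. Pushing forward yields a canonical identification $Ri^!_{t}R\alpha^X_{*}W_r\Omega^n_{X,\log} \cong R\alpha^Z_{*}W_r\Omega^{n-c}_{Z,\log}[-c]$. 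The natural map $W_r\Omega^n_{X,\log,t} \To R\alpha^X_{*}W_r\Omega^n_{X,\log}$ from (i) then produces a Gysin map $W_r\Omega^{n-c}_{Z,\log,t} \To \tau_{\leq 0}Ri^!_t W_r\Omega^n_{X,\log,t}$. A careful analysis of the cohomological degrees in the resulting distinguished triangle, exploiting that the étale Gysin image is concentrated in degree $c$ and the cofibre of the comparison with $R\alpha^X_*$ lies in positive degrees, gives the iso after $\tau_{\leq 0}$.

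\medskip

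\noindent\emph{Curve case and principal obstacle.} For $\dim X = 1$, I would invoke H\"ubner's full purity theorem from \cite{Huebner2023} essentially verbatim: in the tame topology of a curve there are no obstructions of positive codimension, so the semi-purity truncation lifts to the full isomorphism. The main obstacle throughout is justifying the base change $R\alpha^Z_*\circ Ri^!_{\et} \simeq Ri^!_t\circ R\alpha^X_*$: since $\alpha$ is not étale this is not formal, and must be established by a local computation on strictly henselian tame-local rings using the explicit description of tame neighbourhoods of $Z$. A secondary difficulty is that upgrading semi-purity to full purity in dimension $\geq 2$ requires either vanishing of higher tame direct images of $W_r\Omega^n_{X,\log}$ or an independent Gersten-type resolution, which is precisely the conjecture left open in the paper.
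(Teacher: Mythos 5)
Your treatment of (i) and of the semi-purity half of (ii) follows the same route as the paper: identify $W_r\Omega^n_{X,\log,t}$ with $\alpha_*W_r\Omega^n_{X,\log,\et}$ (note this is not a definition but Proposition \ref{proposition_logdRW_in_various_top}, proved via the Gersten conjecture for Milnor $K$-theory and log deRham--Witt sheaves together with Bloch--Kato--Gabber), push the known \'etale statement through $R\alpha_*$, and observe that the truncation kills the higher direct images. Your degree-counting argument for why $\tau_{\leq n}R\beta_*$ does not see the cofibre of $W_r\Omega^n_{X,\log,t}\to R\alpha_*W_r\Omega^n_{X,\log,\et}$ is exactly the content of the paper's Lemma \ref{lemma_grothendieck_sp_seq}(iii). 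Your worry that the exchange $R\alpha_*\circ Ri^!_{\et}\cong Ri^!_t\circ R\alpha_*$ is ``not formal'' is unfounded: the paper obtains it by uniqueness of adjoints from the underived identity $\alpha^*i_*\cong i_*\alpha^*$, which is a stalk computation (Proposition \ref{proposition_commutation_pushforward}(iii)).

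There are, however, two genuine gaps. First, your \'etale input for (ii) is wrong as stated: $W_r\Omega^{n-c}_{Z,\log,\et}[-c]\to Ri^!_{\et}W_r\Omega^n_{X,\log,\et}$ is \emph{not} an isomorphism; only semi-purity $W_r\Omega^{n-c}_{Z,\log,\et}\cong\tau_{\leq 0}\bigl(Ri^!_{\et}W_r\Omega^n_{X,\log,\et}[c]\bigr)$ holds (Milne), and the failure of full purity \'etale-locally is precisely the defect the tame site is meant to repair. Your bookkeeping, which assumes the \'etale $Ri^!$ is concentrated in degree $c$, must be rerun with only the truncated input; the conclusion survives, but not for the reason you give. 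Second, the curve case cannot be settled by invoking H\"ubner ``verbatim'': her theorem is proved for adic spaces, and ``no obstructions of positive codimension'' is a restatement of the claim rather than an argument. The paper adapts her proof to schemes: one reduces the vanishing of $R^ji^!W_r\Omega^n_{X,\log,t}$ for $j\geq 2$ to the vanishing of $H^{j-1}_t(\Spec K(A)/k,\,W_r\Omega^n_{\log,t})$ for $j\geq 2$, where $A$ is the strict henselisation at the closed point, and proves the latter by comparing with the strongly \'etale site: the higher direct images along $(\Spec K(A)/k)_t\to \Spec(K(A))_{s\et}$ vanish because the relevant tame Galois group is of order prime to $p$ while the coefficients are $p$-torsion, and $\Spec K(A)$ has no nontrivial strongly \'etale covers. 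That Galois-cohomological computation is the actual content of full purity in dimension one and is absent from your proposal.
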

\noindent As mentioned already, one expects to be able to do without the truncation in $(ii)$ even in higher dimension; this is work in progress by Koubaa.
We now turn to the mixed characteristic situation and fix the following notation.
Let $A$ be a discrete valuation ring with perfect residue field $k$ of characteristic $p>0$, local parameter $\pi$ and fraction field $K$ of characteristic zero. Let $X$ be a regular semistable scheme which is flat of finite type over $S=\Spec A$. Let $i:Y\hookrightarrow X$ be the inclusion of the special fiber of $X$ and $j:X_K\hookrightarrow X$ the inclusion of the generic fiber. We also denote by $j$ the morphism of sites $(X_K/K)_t \xrightarrow{} (X/S)_\t$.
\begin{definition}
We define \textit{$p$-adic tame Tate twists}
$\T_r^t(n)_X:=\tau_{\leq n}R\alpha_*\T_r(n)_X\in D^b((X/S)_{t},\bb Z/p^r\bb Z).$
\end{definition}
Their main properties, which we prove in this article, are summarised in the following theorem: 
\begin{theorem}
\begin{enumerate}
\item There is a distinguished triangle of the form
$$i_*\nu^{n-1}_{Y,r,t}[-n-1]\xto{g} \T^t_r(n)_X\xto{t'}\tau_{\leq n}Rj_*\mu_{p^r}^{\otimes n}\xto{\sigma_{X,r}(n)}i_*\nu^{n-1}_{Y,r,t}[-n]\in D^b((X/S)_{t},\bb Z/p^r\bb Z)$$
in which $\T_r^t(n)_X$ is concentrated in $[0,n]$ and the triple $(g,\T_r^t(n)_X,t')$ is unique up to unique isomorphism and $g$ is determined by the pair $(\T_r^t(n)_X,t')$. For the definition of $\nu^{n-1}_{Y,r,t}$ see Definition \ref{definition_nu_X}. If $X$ is smooth over $S$, then $\nu^{n-1}_{Y,r,t}$ coincides with  $W_r\Omega_{X,\log,t}^{n-1}$. The map $\sigma_{X,r}(n)$ is Kato's residue map (see Section \ref{section_etaletatetwists}).
\item (Beilinson-Lichtenbaum; Prop. \ref{proposition_BL}) Let $X$ be smooth over $S$. Then the change of sites $(X/S)_t \xrightarrow{\beta} X_{\Nis}$ induces an isomorphism
$$\bb  Z(n)_X/p^r\xto{\cong}\tau_{\leq n}R\beta_*\T_r^t(n)_X. $$
\item (Semi-purity; Prop. \ref{proposition_semi_purity_Tate_twists}) Let $i : Z \hookrightarrow X$ be a locally closed regular subscheme
          of characteristic $p$ and of codimension
          $c \,(\ge 1)$.
         Then there is a Gysin isomorphism
$$
\begin{CD}
     W_r\Omega_{Z,\log,t}^{n-c}[-n-c] @>{\simeq}>> \tau_{\leq n+c} R i^!\T^t_r(n)_X
      \quad \hbox{ in } \; D^b((Z/\Spec(k))_{t},\bb Z/p^r\bb Z).
\end{CD}
$$
\item (Product structure; Prop. \ref{proposition_product_structure}) There is a morphism
$$\T_r^t(n)_X\otimes^{\bb L}\T_r^t(m)_X\to \T_r^t(n+m)_X  $$
in $D^b((X/S)_{t},\bb Z/p^r\bb Z)$ that extends the natural isomorphism $\mu_{p^r}^{\otimes n}\otimes \mu_{p^r}^{\otimes m}\cong \mu_{p^r}^{\otimes n+m}$ on $X[\frac{1}{p}]$.
\end{enumerate}
\end{theorem}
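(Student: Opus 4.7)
The plan is to derive each statement from its étale counterpart by pushing forward along $\alpha\colon X_{\et}\to(X/S)_{t}$ and analyzing how truncation at the weight interacts with $R\alpha_*$. All four parts reduce to combining known étale results (existence and uniqueness of $\T_r(n)_X$ as the fiber of Kato's residue map, Geisser--Levine/Beilinson--Lichtenbaum, étale semi-purity, and Sato's product) with a small amount of t-structure bookkeeping.

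For (i), I would apply $R\alpha_*$ to the étale distinguished triangle defining $\T_r(n)_X$. On the generic fiber $\mu_{p^r}^{\otimes n}$ has invertible order, so Hübner--Schmidt gives $R\alpha_*Rj_*^{\et}\mu_{p^r}^{\otimes n}\simeq Rj_*^t\mu_{p^r}^{\otimes n}$; on the special fiber, $R\alpha_*i_*\nu_{Y,r}^{n-1}\cong i_*\nu_{Y,r,t}^{n-1}$ by the very definition of the tame logarithmic deRham--Witt sheaves used earlier in the paper (coinciding with $W_r\Omega_{X,\log,t}^{n-1}$ when $X/S$ is smooth). Truncating the pushed-forward Kato residue map at level $n$ realises $\T_r^t(n)_X=\tau_{\leq n}R\alpha_*\T_r(n)_X$ as its fiber, producing the asserted triangle with $\T_r^t(n)_X$ concentrated in $[0,n]$ by construction. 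Uniqueness of the triple $(g,\T_r^t(n)_X,t')$ follows from the vanishing $\Hom(i_*\nu^{n-1}_{Y,r,t}[-n-1],\tau_{\leq n}Rj_*^t\mu_{p^r}^{\otimes n})=0$, since the source sits in cohomological degree $n+1$ and the target in $[0,n]$.

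Part (ii) is formal: from $\epsilon=\beta\circ\alpha$ and the left t-exactness of $R\beta_*$ one has $\tau_{\leq n}R\beta_*F=\tau_{\leq n}R\beta_*\tau_{\leq n}F$ for bounded-below $F$ (as $R\beta_*\tau_{>n}F\in D^{\geq n+1}$), hence $\tau_{\leq n}R\epsilon_*\T_r(n)_X=\tau_{\leq n}R\beta_*\T_r^t(n)_X$, and the étale Beilinson--Lichtenbaum theorem identifies the left-hand side with $\bb Z(n)_X/p^r$. For (iv), apply the lax-monoidal structure of $R\alpha_*$ to the étale product $\T_r(n)_X\dotimes\T_r(m)_X\to\T_r(n+m)_X$, precompose with the canonical maps $\T_r^t(\ell)_X\to R\alpha_*\T_r(\ell)_X$, and use that $\T_r^t(n)_X\dotimes\T_r^t(m)_X$ is supported in degrees $\leq n+m$ to factor the composite uniquely through $\T_r^t(n+m)_X=\tau_{\leq n+m}R\alpha_*\T_r(n+m)_X$; the claim that this extends the generic-fiber cup product is automatic since nothing is truncated over $X_K$.

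The main obstacle is (iii). The backbone is the base-change identity $Ri^!R\alpha_*\simeq R\alpha_*Ri^!$ (both are right adjoints of $\alpha^*i^*$), which, combined with étale semi-purity and $\alpha_*W_r\Omega^{n-c}_{Z,\log}=W_r\Omega^{n-c}_{Z,\log,t}$, produces
\[
\tau_{\leq n+c}Ri^!R\alpha_*\T_r(n)_X\simeq W_r\Omega^{n-c}_{Z,\log,t}[-(n+c)].
\]
The delicate point is passing from $R\alpha_*\T_r(n)_X$ to the truncation $\T_r^t(n)_X$: the cone $\tau_{>n}R\alpha_*\T_r(n)_X$ sits in degrees $>n$, but $Ri^!$ a priori only preserves this lower bound, whereas we need it pushed above $n+c$ for $\tau_{\leq n+c}$ to annihilate it. I would resolve this by computing $Ri^!\T_r^t(n)_X$ directly through the triangle of (i): the open contribution vanishes because $Ri^!Rj_*^t=0$ for $Z$ disjoint from the generic fiber, and the closed contribution is controlled by the (semi-)purity for tame logarithmic deRham--Witt sheaves proved in the preceding theorem. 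Matching degrees in the resulting long exact sequence pins down $\tau_{\leq n+c}Ri^!\T_r^t(n)_X$ as $W_r\Omega^{n-c}_{Z,\log,t}[-(n+c)]$.
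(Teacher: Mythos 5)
Your treatments of (ii) and (iv) are correct and essentially match the paper; for (iv) your direct factorization of the composite through $\tau_{\leq n+m}R\alpha_*\T_r(n+m)_X$ using the right t-exactness of $\otimes^{\bb L}$ is, if anything, cleaner than the paper's route through the residue map. The gaps are in (i) and (iii). For (i), the sentence ``truncating the pushed-forward Kato residue map at level $n$ realises $\T_r^t(n)_X$ as its fiber'' is precisely the point that requires proof: applying $\tau_{\leq n}$ to a distinguished triangle does not in general yield a distinguished triangle. After pushing forward one has the triangle $R\alpha_*\T_r(n)_X\to \tau_{\leq n}Rj_*\mu_{p^r}^{\otimes n}\to R\alpha_*i_*\nu^{n-1}_{Y,r,\et}[-n]$, and for its truncation to remain distinguished one needs the induced map of tame sheaves $\sigma_{X,r}(n):R^nj_*\mu_{p^r}^{\otimes n}\to i_*\nu^{n-1}_{Y,r,t}$ to be \emph{surjective}; otherwise the fiber of the truncated map acquires an $\cal H^{n+1}$. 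This surjectivity is the one genuinely new input of the paper's argument: it is verified on tame stalks (regular henselian local rings) by identifying both sides with Milnor $K$-theory mod $p^r$, where surjectivity of the residue is known, the semistable case being reduced to the smooth one. Your proposal omits this entirely (note also that $R\alpha_*\nu^{n-1}_{Y,r,\et}$ equals $\nu^{n-1}_{Y,r,t}$ only after applying $\tau_{\leq 0}$). Separately, the Hom-vanishing you invoke for uniqueness is the wrong one: $\Hom(i_*\nu^{n-1}_{Y,r,t}[-n-1],\tau_{\leq n}Rj_*\mu_{p^r}^{\otimes n})$ is a group of maps from an object of $D^{\geq n+1}$ to an object of $D^{\leq n}$, which computes higher Ext groups such as $\Ext^{1}(\nu^{n-1}_{Y,r,t},R^nj_*\mu_{p^r}^{\otimes n})$ and has no reason to vanish; the t-structure axiom kills maps in the opposite direction only. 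The vanishing needed for Lemma \ref{lemma_uniqueness_D}(iii) is $\Hom(\T^t_r(n)_X,i_*\nu^{n-1}_{Y,r,t}[-n-1])=0$, which does hold because the source lies in $D^{\leq n}$ and the target in degree $n+1$.

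For (iii) you correctly isolate the delicate point (the mismatch between the truncation at $n$ defining $\T_r^t(n)_X$ and the truncation at $n+c$ in the statement), but your proposed repair fails at its first step: $Ri^!\left(\tau_{\leq n}Rj_*\mu_{p^r}^{\otimes n}\right)$ does \emph{not} vanish. The identity $Ri^!Rj_*=0$ holds only for the untruncated pushforward; from the triangle $\tau_{\leq n}Rj_*\mu_{p^r}^{\otimes n}\to Rj_*\mu_{p^r}^{\otimes n}\to\tau_{>n}Rj_*\mu_{p^r}^{\otimes n}$ one gets, for the closed complement of the generic fiber, $Ri^!\tau_{\leq n}Rj_*\mu_{p^r}^{\otimes n}\cong i^*\tau_{>n}Rj_*\mu_{p^r}^{\otimes n}[-1]$, which is nonzero in general --- the nontrivial restriction of $\tau_{\leq n}Rj_*\mu_{p^r}^{\otimes n}$ to the special fiber is precisely the point of the construction of $p$-adic Tate twists. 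The paper's own proof instead applies $\tau_{\leq n+c}R\alpha_*$ directly to the \'etale Gysin isomorphism of {\bf T3} and commutes $R\alpha_*$ past $Ri^!$ using Proposition \ref{proposition_commutation_pushforward}(iii) and Lemma \ref{lemma_grothendieck_sp_seq}(iii); if you wish to pursue your alternative route through the triangle of (i), you must first compute $Ri^!$ of the truncated nearby-cycle term rather than discard it.
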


The Beilinson-Lichtenbaum conjecture for logarithmic deRahm-Witt sheaves and $p$-adic Tate twists both in the \'etale and the tame topology tells us that their main difference lies in cohomological degrees above the weight. Due to the expected full purity in the tame topology, one can expect a very nice form of the Gersten conjecture in the tame topology (see Conjecture \ref{conjecture_Gersten_logdRW} and \ref{conjecture_Gersten_tame_Tate_twists}) which we verify for curves in characteristic $p$.
\begin{proposition}(Prop. \ref{proposition_Gersten_curve})
Let $X$ be a smooth curve over a field $k$ of characteristic $p>0$. Let $x\in X$ be a closed point and $A:=\roi_{X,x}$. Then there are short exact sequences
$$ 0  \to W_r\Omega_{X,\log,t}^n(A) \to W_r\Omega_{X,\log,t}^n(K(A))\to W_r\Omega_{k(x),\log,t}^{n-1}(k(x))\to 0$$
and
$$0\to H^1_t(\Spec A/k,W_r\Omega_{A,\log,t}^n)\to  H^1_t(\Spec K(A)/k,W_r\Omega_{X,\log,t}^n) \to  H^{1}_t(\Spec k(x)/k,W_r\Omega_{k(x),\log,t}^{n-1}) \to 0 $$
and isomorphisms $H^j_t(\Spec A/k,W_r\Omega_{X,\log,t}^n)\xto{\cong}  H^j_t(\Spec K(A)/k,W_r\Omega_{X,\log,t}^n)$
for $j\geq 2$.
\end{proposition}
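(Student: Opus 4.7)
The plan is to apply the localisation/Gysin triangle on $(\Spec A/k)_t$, using full purity for curves (part (ii) of the first theorem) to identify $Ri^!$, and then to deduce the statements from the resulting long exact sequence after tracking the surjectivity of the residues and a cohomological-dimension bound over fields. Write $i\colon\Spec k(x)\hookrightarrow \Spec A$ for the closed point and $j\colon\Spec K(A)\hookrightarrow \Spec A$ for its open complement. Full purity gives $Ri^!W_r\Omega_{X,\log,t}^n\cong W_r\Omega_{k(x),\log,t}^{n-1}[-1]$, hence a distinguished triangle
\[
i_*W_r\Omega_{k(x),\log,t}^{n-1}[-1]\to W_r\Omega_{X,\log,t}^n|_{\Spec A}\to Rj_*W_r\Omega_{X,\log,t}^n|_{\Spec K(A)}\xrightarrow{+1}
\]
on $(\Spec A/k)_t$. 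Taking $R\Gamma_t(\Spec A/k,-)$ produces a long exact sequence relating the three groups in the proposition.

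Two ingredients feed into this long exact sequence. First, $H_t^j(\Spec F/k,W_r\Omega_{F,\log,t}^m)=0$ for $j\ge 2$ whenever $F/k$ is a field of characteristic $p$; this should follow by comparing the tame and \'etale sites of a point over $k$ and invoking $\mathrm{cd}_p(F)\le 1$ together with the Artin--Schreier--Witt short exact sequence for $W_r\Omega_{F,\log}^m$. Second, the residues $\partial_0$ and $\partial_1$ are surjective. One identifies $\partial_0$ with Kato's classical residue $W_r\Omega_{K(A),\log}^n(K(A))\to W_r\Omega_{k(x),\log}^{n-1}(k(x))$, split by $\alpha\mapsto \dlog\pi\wedge\widetilde\alpha$ for any lift $\widetilde\alpha$; for $\partial_1$ one argues in parallel, splitting the boundary on first cohomology by cup product with the class $\dlog\pi\in W_r\Omega_{K(A),\log}^1(K(A))$.

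With these inputs the long exact sequence unwinds. For $j\ge 3$ both $H_t^{j-1}(\Spec k(x)/k,\cdot)$ and $H_t^j(\Spec K(A)/k,\cdot)$ vanish, hence $H^j_t(\Spec A/k,\cdot)=0=H^j_t(\Spec K(A)/k,\cdot)$ and the asserted isomorphism is trivial. For $j=2$, the vanishing $H^2_t(\Spec K(A)/k,\cdot)=0$ combined with the surjectivity of $\partial_1$ forces $H^2_t(\Spec A/k,\cdot)=0$, yielding the isomorphism. For $j=0,1$ the surjectivities of $\partial_0$ and $\partial_1$ cut the long exact sequence into the two claimed short exact sequences.

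The main obstacle is surjectivity of $\partial_1$: one must verify that the boundary arising from the tame Gysin triangle matches Kato's classical residue on the $H^1$-level, so that the cup-product splitting by $\dlog\pi$ applies. This boils down to tracing the purity isomorphism of part (ii) of the first theorem through the connecting map of the localisation triangle. A secondary subtlety is the vanishing step over fields, which demands a careful comparison of tame and \'etale cohomology for a point over $k$ with these $p$-primary coefficients.
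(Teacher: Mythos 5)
Your overall skeleton (localisation triangle plus full purity for curves) matches the paper's, but two of your key inputs do not hold up, and they are precisely where the real work lies. First, the claimed vanishing $H_t^j(\Spec F/k,W_r\Omega_{F,\log,t}^m)=0$ for $j\ge 2$ and \emph{every} field $F/k$ of characteristic $p$ is unjustified for $F=K(A)$. The comparison of tame and \'etale cohomology (Theorems \ref{theorem_tame_coh_l_coef} and \ref{theorem_tame_coh_proper}) requires either invertible coefficients or properness, neither of which applies to the generic point of a curve with $p$-torsion coefficients; tame cohomology of $\Spec K(A)/k$ is governed by the Galois group of the maximal extension tame at all $k$-valuations, which is a \emph{quotient} of the absolute Galois group, and quotients can have larger $p$-cohomological dimension, so $\mathrm{cd}_p(K(A))\le 1$ does not transfer. (For the residue field $k(x)$ the comparison is fine, since $k(x)/k$ is algebraic and hence carries only the trivial $k$-valuation.) Note that the proposition itself asserts isomorphisms $H^j_t(\Spec A/k,\cdot)\cong H^j_t(\Spec K(A)/k,\cdot)$ for $j\ge 2$, not vanishing; the paper never claims these groups are zero.

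Second, your treatment of $\partial_1$ is where the argument actually breaks. A cup-product splitting by $\dlog\pi$ of the boundary $H^1_t(\Spec K(A)/k,W_r\Omega^n)\to H^1_t(\Spec k(x)/k,W_r\Omega^{n-1})$ presupposes a map from cohomology of the residue field to cohomology of the fraction field, and $k(x)$ is not a subfield of $K(A)$ (and $A=\roi_{X,x}$ is not henselian, so there is no coefficient field to exploit); this works for $\partial_0$ at the level of symbols via the Gersten sequence for Milnor $K$-theory, but not at the $H^1$ level. The paper instead proves the equivalent statement that $H^2_x(\Spec A/k,W_r\Omega^n_{\log,t})\to H^2_t(\Spec A/k,W_r\Omega^n_{\log,t})$ is zero by a genuinely geometric argument: Gabber's presentation lemma produces a Nisnevich neighbourhood $U'\to\bb A^1_k$ of $x$, excision transfers the problem to $\bb A^1_k$, and compactifying to $\bb P^1_k$ one uses Theorem \ref{theorem_tame_coh_proper}, purity for curves, and the strict $\bb A^1$-invariance of Proposition \ref{proposition_strict_A1invariance} (giving $s_0^*=s_\infty^*$) to conclude. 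This homotopy-invariance input is essential and entirely absent from your outline; without it, or a genuine substitute, the $j=1,2$ part of the statement is not proved.
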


The structure of the article is as follows. In Section \ref{section_tame_site} we recall the tame site and the most important theorems concerning it. We also prove a new rigidity result for tame morphisms. In Section \ref{section_homological_algebra} we collect the main technical tools from homological algebra which we will need. In Section \ref{section_logdeRhamWitt} we prove our main results about logarithmic deRham-Witt sheaves. The central result is the Gersten conjecture for curves. We expect that these results can be improved using work in progress by Koubaa.
In Section \ref{section_etaletatetwists} we recall Saito's $p$-adic \'etale Tate twists.
In Section \ref{section_tametatetwists} we define $p$-adic tame Tate twists and prove that they share many important properties with $p$-adic \'etale Tate twists.
In Section \ref{section_cycle_class} we discuss the mod $p^r$ cycle class map from Chow groups to tame cohomology.

\paragraph{Acknowledgement.} First of all, I would like to heartily thank Rızacan \c{C}ilo\u{g}lu for many discussions and help with this work. I would also like to thank Christian Dahlhausen, Amine Koubaa and Katharina Hübner for their comments, interest and helpful discussions. The author  acknowledges support by Deutsche Forschungsgemeinschaft (DFG, German Research Foundation) through the Collaborative Research Centre TRR 326 \textit{Geometry and Arithmetic of Uniformized Structures}, project number 444845124.

\section{The tame site}\label{section_tame_site}
\subsection{Definitions, facts and conjectures}
We recall the definition of the tame site and some facts on tame cohomology from \cite{HuebnerSchmidt2021}. We fix the following conventions. A valuation $v$ on a field $K$ is always non-archimedean, possibly trivial and not necessarily discrete or of finite rank. We denote by $\roi_v, \mathfrak{m}_v$ and $k(v)$ the corresponding valuation ring, maximal ideal and residue field.

\begin{definition}
An $S$-valuation on an $S$-scheme $X$ is a valuation $v$ on the residue field $k(x)$ of some point $x\in X$ such that there exists a morphism $\phi:\Spec(\roi_v)\to S$ making the diagram
$$\xymatrix{
\Spec(k(x)) \ar[r] \ar[d]_{} & X \ar[d]^{f} \\
\Spec(\roi_v) \ar[r]^-\phi  & S
}$$
commute. The set of all $S$-valuations is denoted by $\Val_SX$. We denote  elements of $\Val_SX$ by $(x,v),x\in X,v\in \Val_S(k(x))$.
\end{definition}
Given a commutative square
$$\xymatrix{
X' \ar[r]^-\phi \ar[d]_{f'} & X \ar[d]^{f} \\
S'\ar[r]^\psi  & S
}$$
there is an associated map $$\Val_\psi\phi: \Val_{S'}X'\to \Val_SX, \quad (x',v')\mapsto (x=\phi(x'),v=v'\mid_{k(x)}).$$
\begin{definition}
The tame site $(X/S)_t$ consists of the following data:
\begin{enumerate}
\item The underlying category $\mathrm{Cat}(X/S)_t$ is the category of \'etale morphisms $p:U\to X$.
\item A family $(U_i\to U)_{i\in I}$ of morphisms in $\mathrm{Cat}(X/S)_t$ is a covering if it is an \'etale covering and for every point $(u,v)\in \Val_SU$ there exists an index $i$ and a point $(u_i,v_i)\in \Val_SU_i$ mapping to $(u,v)$ such that $v_i/v$ is tamely ramified.
\end{enumerate}
When $S$ is the spectrum of a field for simplicity we sometimes denote $(X/S)_t$ by $X_t$.
\end{definition}

\begin{remark}
The Nisnevich topology has the same underlying category as the tame topology. A family $(U_i\to U)_{i\in I}$ of morphisms in $\mathrm{Cat}X_{\Nis}$ is a Nisnevich covering if for every $x\in U$ there exists an $i$ and a point $y\in U_i$ such that $k(x)\cong k(y)$. In particular for $(x,v)\in \Val_SU$ there exists a tamely ramified preimage $(y,v)\in \Val_SU_i$.
In other words,	every Nisnevich covering is tame and there are natural morphisms of sites
$X_{\et}\xrightarrow{\alpha}(X/S)_\t \xrightarrow{\beta} X_{\Nis} $ and $X_{\et}\xrightarrow{\epsilon}X_{\Nis}.$
\end{remark}

There are the following two important comparison theorems:
\begin{theorem}\cite[Prop.
	8.1]{HuebnerSchmidt2021}\label{theorem_tame_coh_l_coef}
	Let $m\geq 1$ be an integer invertible on $S$ and $F$ be an \'etale sheaf of $\bb Z/m\bb Z$-modules on $X$. Then the natural morphism of sites $\alpha: X_{\et}\to (X/S)_t$ induces isomorphisms
$$H^q_t(X/S,\alpha_*F)\xto{\cong} H^q_{\et}(X, F)$$
for all $q\geq 0$.
\end{theorem}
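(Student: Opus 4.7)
The plan is to reduce the assertion to the sheaf-theoretic vanishing $R^q\alpha_*F=0$ for $q\ge 1$; once that is established, the Leray spectral sequence for $\alpha$ collapses and delivers the claimed edge isomorphisms in every degree. Since this vanishing is local on the tame site, I would argue stalk-wise at points of the tame topos.

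The first step is to describe points of $(X/S)_t$ and the associated stalk functors. A point is naturally given by a pair $(\bar x,v)$ consisting of a geometric point of $X$ and an $S$-valuation $v$ on its residue field, and its stalk functor is a filtered colimit over tame étale neighborhoods. Writing $\roi^{\mathrm{sh}}_{X,\bar x}$ for the ordinary strict henselization at $\bar x$ and $\roi^{\mathrm{tsh}}$ for the corresponding \emph{tame-strict} henselization classifying tame neighborhoods of $(\bar x,v)$, the stalk of $\alpha_*F$ at $(\bar x,v)$ is $F(\Spec\roi^{\mathrm{tsh}})$, and more generally the stalk of $R^q\alpha_*F$ is the Galois cohomology $H^q(G_w,F_{\bar x})$, where $G_w:=\Gal(\roi^{\mathrm{sh}}/\roi^{\mathrm{tsh}})$ is the wild inertia subgroup at $v$.

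The crux is then the classical fact that the wild inertia $G_w$ is pro-$p$, where $p$ is the residue characteristic of $v$ (and trivial when this characteristic is zero). Since $v$ is an $S$-valuation, its residue characteristic is either $0$ or coincides with $\Char k(s)$ for some $s\in S$; in either case $m$ is invertible in $k(v)$ because it is invertible on $S$. Consequently $F_{\bar x}$, a $\bb Z/m\bb Z$-module, is cohomologically trivial for the pro-$p$ group $G_w$, so $H^q(G_w,F_{\bar x})=0$ for $q\ge 1$. This yields the desired vanishing of $R^q\alpha_*F$ and finishes the argument.

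The hard part will be the first step: constructing tame-strictly-henselian local rings attached to points of $(X/S)_t$ in enough detail to identify the stalks of $R\alpha_*$ with Galois cohomology of wild inertia. This is precisely the structural analysis of the tame topos carried out in \cite{HuebnerSchmidt2021}; once those foundations are in place, the vanishing becomes a formal consequence of the pro-$p$ nature of wild inertia combined with the prime-to-$p$ hypothesis on the coefficients.
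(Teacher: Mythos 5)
Your proposal matches the paper's treatment: the paper itself only cites \cite[Prop.\ 8.1]{HuebnerSchmidt2021} for this statement, and the remark immediately following it records exactly your reduction, namely that by the Leray spectral sequence it suffices to prove $R^q\alpha_*F=0$ for $q\geq 1$, which is the content of the cited result. Your stalkwise sketch of that vanishing (points of the tame topos given by pairs $(\bar x,v)$, identification of the stalks of $R^q\alpha_*$ with cohomology of wild inertia, and the pro-$p$ versus prime-to-$p$ coefficient argument) is the standard route taken in Hübner--Schmidt, so the approach is essentially the same.
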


\begin{remark}
By the Leray spectral sequence it suffices to show that $R^q\alpha_*F=0$ for $q\geq 1$; this is shown in \textit{loc. cit}. The theorem may therefore be rephrased saying that the canonical map $\alpha_*F\to R\alpha_*F$ is a quasi-isomorphism.
\end{remark}

\begin{theorem}\cite[Thm. 8.2]{HuebnerSchmidt2021}\label{theorem_tame_coh_proper}
	Let $X$ be a quasi-compact scheme having the property that every finite subset of $X$ is contained in an affine open, $S$ a quasi-compact quasi-separated scheme and $X\to S$ a proper morphism. Then for every sheaf $F$ of abelian groups on $(X/S)_t$ with \'etale sheafification $F^{\sharp\et}$ the natural map
$$H^q_t(X/S,F)\to H^q_{\et}(X, F^{\sharp\et})$$
is an isomorphism for all $q\geq 0$.
\end{theorem}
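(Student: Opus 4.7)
The plan is to prove the stronger statement that, under the hypotheses that $X\to S$ is proper and $X$ has the AF property, every étale covering of any $U\in\mathrm{Cat}(X/S)_t$ is automatically a tame covering. Once this is established, the tame and étale Grothendieck topologies on the common underlying category coincide, so every tame sheaf $F$ is already an étale sheaf; then $F^{\sharp\et}=F$ and the natural comparison map $H^q_t(X/S,F)\to H^q_{\et}(X,F^{\sharp\et})$ is tautologically an isomorphism, simultaneously for all $q\geq 0$.

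To prove the stronger topology-comparison claim, I take an étale covering $(U_i\to U)_{i\in I}$ in $\mathrm{Cat}(X/S)_t$ and a point $(u,v)\in\Val_S U$. The definition of $\Val_S U$ supplies a compatible $S$-morphism $\Spec(\roi_v)\to S$ extending $\Spec(k(u))\to U\to X\to S$. Since $X\to S$ is proper, the valuative criterion of properness, in the general form allowing non-discrete valuations of arbitrary rank, produces a unique $S$-morphism $\varphi\colon\Spec(\roi_v)\to X$ through which $\Spec(k(u))\to X$ factors. The AF hypothesis on $X$ lets me work affine-locally on $X$ around the image of $\varphi$, which is needed to quote the affine form of the valuative criterion and the subsequent local analysis.

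Next I pick an index $i$ and a point $u_i\in U_i$ mapping to $u$ (such exists because the étale family is jointly surjective), together with a valuation $v_i$ on $k(u_i)$ extending $v$. The extension $k(u_i)/k(u)$ is finite separable since $U_i\to U$ is étale, so the residue field extension is separable; it remains to control the ramification. For this I base-change the étale morphism $U_i\to U$ along $\varphi$ to obtain an étale morphism above $\Spec(\roi_v)$, and then reduce further to the henselization of $\roi_v$ at the centre of $v$. At the henselian level, finite étale extensions of the spectrum of a henselian valuation ring correspond to tamely ramified finite separable extensions of the fraction field (an arbitrary-rank generalization of Abhyankar's lemma), which is exactly the tameness of $v_i/v$ that the definition of a tame covering demands.

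The main obstacle is the final valuation-theoretic step: ruling out wild ramification in an étale extension above a valuation of possibly infinite rank and non-discrete value group. This is precisely where properness of $X\to S$ is indispensable, as it is the ingredient that globalizes the valuation to a morphism $\varphi\colon\Spec(\roi_v)\to X$ and thereby lets the étale extension $U_i\to U$ be analysed in the controlled geometric setting of an embedded valuation ring; without properness, the valuation need not extend and wild behaviour can appear at infinity. The AF property is what licenses the reduction to an affine open containing the image of $\varphi$ at every stage of this argument.
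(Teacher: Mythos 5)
Your strategy rests on the claim that properness of $X\to S$ forces \emph{every} étale covering of \emph{every} object $U\in\mathrm{Cat}(X/S)_t$ to be tame, so that the two topologies coincide. That claim is false, and this is a genuine gap rather than a fixable detail. Take $X=\mathbb{P}^1_{\mathbb{F}_p}$ over $S=\Spec\mathbb{F}_p$ and $U=\mathbb{A}^1\subset X$; the open immersion is étale, so $U$ is an object of $\mathrm{Cat}(X/S)_t$. Since $S$ is a field, every valuation on $k(u)$ for $u\in U$ gives a point of $\Val_SU$; in particular the valuation $v_\infty$ at infinity on the generic point is an $S$-valuation on $U$. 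The Artin--Schreier covering $\{y^p-y=t\}\to\mathbb{A}^1$ is a finite étale covering of $U$ which is totally wildly ramified over $v_\infty$, so it admits no tame preimage of $(\eta,v_\infty)$ and is therefore not a tame covering. Hence the tame and étale sites on $\mathbb{P}^1$ do not coincide even though $\mathbb{P}^1$ is proper; the content of the theorem is that the cohomology of the terminal object nevertheless agrees, not that the topoi are equal (they are not: tame and étale sheaves already disagree on $U=\mathbb{A}^1$).

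The step where your argument breaks is the base change ``along $\varphi$''. The valuative criterion applied to the proper morphism $X\to S$ produces $\varphi\colon\Spec(\roi_v)\to X$, but the covering you must tame is a covering of $U$, and $\varphi$ need not factor through $U$: the center of $v$ on $X$ can lie in $X\setminus U$ (in the example, at infinity), which is exactly where wild ramification lives. So you cannot form $U_i\times_U\Spec(\roi_v)$, and the subsequent henselization argument has nothing to apply to. (As a secondary point, even where it does apply, finite étale covers of a henselian valuation ring correspond to \emph{unramified} extensions, not to all tamely ramified ones, and $U_i\to U$ need not be finite.) What properness actually buys, and what the proof in H\"ubner--Schmidt exploits, is that étale coverings of objects that are themselves proper over $S$ (such as $X$ itself) are tame, because there the center always exists on the object being covered; the real work of the theorem is then to compare cohomology on the full sites, where the non-proper objects $U$ genuinely carry different topologies, via the morphism of sites and sheafification rather than via an identification of topologies.
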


Tame cohomology groups are expected to be $\bb A^1$-invariant in the following sense.
\begin{conjecture}
Let $X$ be a scheme over a base scheme $S$ and $pr:\bb A^1_X\to X$ the natural projection. Then for every torsion sheaf $F\in Sh_t(X/S)$ the natural map
$$H^q_t(X/S,F)\to H^q_t(\bb A^1_X/S, pr^*F)$$
is an isomorphism for all $q\geq 0$.
\end{conjecture}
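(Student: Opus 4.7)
The plan is to first establish $\bb A^1$-invariance for locally constant torsion sheaves, and then extend to arbitrary torsion sheaves via devissage along a stratification. I would split $F$ into its $\ell$-primary components and treat torsion prime to the residue characteristics and $p$-primary torsion separately.

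For the prime-to-$p$ part on a locally constant sheaf, Theorem \ref{theorem_tame_coh_l_coef} identifies the relevant tame cohomology with étale cohomology, and $\bb A^1$-invariance is then the classical smooth base change theorem for étale cohomology with coefficients of order invertible on $S$. For the $p$-primary part in positive residue characteristic, I would reduce via a Beilinson–Lichtenbaum style argument (Proposition \ref{proposition_BL_dRW}) to $\bb A^1$-invariance of the Nisnevich motivic complexes $\bb Z(n)/p^r$, which is standard. As the introduction already indicates, these ingredients yield $\bb A^1$-invariance for locally constant torsion under resolution of singularities; the whole point of working in the tame rather than étale topology in characteristic $p$ is that Artin–Schreier phenomena, which destroy $\bb A^1$-invariance étale locally, are designed out of the tame site.

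The passage from locally constant to arbitrary torsion sheaves would proceed by Noetherian induction on $\dim X$: choose a stratification of $X$ into locally closed subschemes on which $F$ becomes locally constant, and compare the localization triangles $i_*Ri^!\to \mathrm{id}\to Rj_*j^*\to$ for $X$ and for $\bb A^1_X$. Pullback along $pr$ is exact on sheaves and commutes with $j^*$ and $i_*$ for strata stable under projection from $\bb A^1$, so the induction closes once one knows that $H^q_t(\bb A^1_Z/S,pr^*F|_Z)=H^q_t(Z/S,F|_Z)$ along each stratum $Z$.

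The main obstacle is twofold. First, making the devissage effective requires a purity statement for tame cohomology in arbitrary codimension, of which only the semi-purity Proposition \ref{proposition_semi_purity_logdeRhamWiss} and the one-dimensional full purity Proposition \ref{proposition_purity_curves} are currently available; the full-purity input is precisely the work in progress by Koubaa mentioned in the introduction. Second, passing from constructible to arbitrary torsion sheaves demands that tame cohomology commute with filtered colimits of torsion sheaves, which in turn rests on coherence properties of the tame site and on a sufficiently developed six-functor formalism that does not yet seem to be in place. I therefore expect a clean proof of the conjecture to require both strict $\bb A^1$-invariance for logarithmic deRham–Witt sheaves (as in the Gersten discussion of Section \ref{section_logdeRhamWitt}) and a foundational enlargement of the theory of the tame site beyond what is established in \cite{HuebnerSchmidt2021}.
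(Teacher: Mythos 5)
This statement is a \emph{conjecture} in the paper: the author does not prove it, and the only evidence recorded is Theorem \ref{theorem_A1_invariance_tame_coh} (H\"ubner--Schmidt), which treats locally constant torsion sheaves on a \emph{regular} scheme essentially of finite type over an affine noetherian base of characteristic $p$, and even then only conditionally on resolution of singularities. So there is no proof in the paper to compare yours against, and your text is, by your own account, a strategy with acknowledged open inputs rather than a proof. It cannot be accepted as establishing the statement.

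Beyond that global point, two of your intermediate steps are not sound as stated. First, the reduction of the $p$-primary locally constant case to Proposition \ref{proposition_BL_dRW} only concerns the particular sheaves $W_r\Omega^n_{X,\log,t}$; a general locally constant $p$-torsion sheaf on the tame site (one trivialized by a nontrivial tame cover) is not of this form, so Beilinson--Lichtenbaum buys you nothing there --- the correct reference for the locally constant case is precisely Theorem \ref{theorem_A1_invariance_tame_coh}, with its hypotheses. Second, the conjecture is stated for an arbitrary scheme $X$ over an arbitrary base $S$, with no regularity, finiteness, or characteristic assumptions; any argument built on stratifications into strata where $F$ is locally constant, on full purity (open even for regular $X$ in codimension $\geq 2$), or on resolution of singularities can at best yield a special case. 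The honest conclusion of your outline is the one the paper itself draws by labelling the statement a conjecture: the locally constant case is known conditionally, and the passage to general torsion sheaves requires foundational results (full purity, a six-functor formalism, commutation with filtered colimits in the required generality) that are not yet available.
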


\begin{theorem}\cite[Thm. 15.4]{HuebnerSchmidt2021}\label{theorem_A1_invariance_tame_coh}
Let $S$ be an affine noetherian scheme of characteristic $p>0$	and $X$ a regular scheme which is essentially of finite type over $S$ and $pr:\bb A^1_X\to X$ the natural projection. Assume that resolution of singularities holds over $S$. Then for every locally constant torsion sheaf $F\in Sh_t(X/S)$ the natural map
$$H^q_t(X/S,F)\to H^q_t(\bb A^1_X/S, pr^*F)$$
is an isomorphism for all $q\geq 0$.
\end{theorem}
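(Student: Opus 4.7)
My plan is to prove the statement by computing $Rpr_\ast$ on the tame site and showing that the adjunction unit $F \to Rpr_\ast pr^\ast F$ is an isomorphism, which by the Leray spectral sequence gives the desired equality of cohomology groups. The first reduction is standard: a locally constant torsion sheaf decomposes into its $\ell$-primary components for the various primes $\ell$, and by devissage (writing $F$ as an iterated extension of $\ell$-torsion quotients, then further as extensions of locally constant $\mathbb{F}_\ell$-sheaves) it suffices to treat $F$ locally constant annihilated by a single prime $\ell$.

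For $\ell$ invertible on $S$, I would invoke Theorem \ref{theorem_tame_coh_l_coef} which identifies tame and \'etale cohomology for such coefficients; the statement then reduces to the classical $\mathbb{A}^1$-homotopy invariance of \'etale cohomology with prime-to-$p$ torsion coefficients over a regular base (SGA 4, XV). One has to check some compatibility: $\alpha_\ast$ commutes with $pr^\ast$ for locally constant sheaves (so that tame $pr^\ast F$ really is the pushforward of the \'etale $pr^\ast F$), which follows from the description of tame coverings as \'etale coverings satisfying a condition that is preserved under base change along the smooth morphism $pr$.

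The substantive case is $\ell = p$, where the analogous statement fails \'etale-locally and the tame topology is genuinely needed. I would proceed by reducing further to the case of the constant sheaf $\mathbb{Z}/p$, using Artin-Schreier theory: any locally constant $\mathbb{F}_p$-sheaf is Nisnevich-locally (hence tame-locally) obtained from a finite tame \'etale cover, and a Hochschild-Serre spectral sequence converts the question into one for a constant sheaf on a tame cover of $\mathbb{A}^1_X$, which by the tame Abhyankar-type behavior is again of the form $\mathbb{A}^1_{X'}$ over a tame cover $X'\to X$. To handle $\mathbb{Z}/p$ itself, consider the compactification $\mathbb{A}^1_X \hookrightarrow \mathbb{P}^1_X$ with complement the section $s_\infty : X \hookrightarrow \mathbb{P}^1_X$ at infinity. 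The localization triangle on the tame site together with Theorem \ref{theorem_tame_coh_proper} applied to $\mathbb{P}^1_X \to S$ (so that its tame cohomology equals \'etale cohomology, computable via $\mathbb{P}^1$-bundle formulas and the degeneration of the Leray spectral sequence for $\mathbb{P}^1_X \to X$) reduces $\mathbb{A}^1$-invariance to the assertion that the local tame cohomology along $s_\infty$ with $\mathbb{Z}/p$-coefficients vanishes above degree zero, i.e.\ a tame purity statement at infinity.

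The main obstacle is exactly this last point: showing that Artin-Schreier-Witt extensions of $\mathbb{A}^1_X$ that are tame along $s_\infty$ are in fact pulled back from $X$. This is where resolution of singularities enters: given any connected tame \'etale cover $Y \to \mathbb{A}^1_X$, one chooses (using resolution over $S$) a proper regular compactification $\overline{Y}$ of $Y$ over $\mathbb{P}^1_X$ such that the complement is a strict normal crossings divisor. The tameness hypothesis at the components of the boundary dominating $s_\infty(X)$ forces the inertia at those components to be of prime-to-$p$ order; but in characteristic $p$ a $\mathbb{Z}/p$-covering has only $p$-inertia, so it must be unramified along $s_\infty$, and combined with $H^i_t(\mathbb{P}^1_X/S,\mathbb{Z}/p) \cong H^i_t(X/S,\mathbb{Z}/p)$ for $i \leq 1$ this yields the descent to $X$. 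Extending this argument to higher cohomological degrees is the delicate part and requires a careful analysis of the spectral sequence (or an induction on the dimension of $X$) using that the boundary divisor has lower dimension than $\mathbb{A}^1_X$ and already satisfies $\mathbb{A}^1$-invariance.
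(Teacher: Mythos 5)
A preliminary remark: the paper does not contain a proof of this statement --- it is quoted with attribution from H\"ubner--Schmidt \cite{HuebnerSchmidt2021}, Thm.~15.4 --- so there is no internal argument to compare yours against, and I can only assess your sketch on its own merits. Your overall skeleton (split off the prime-to-$p$ part via Theorem \ref{theorem_tame_coh_l_coef}, compactify to $\mathbb{P}^1_X$, and isolate the contribution at infinity) is reasonable, but two steps do not work as written. First, you apply Theorem \ref{theorem_tame_coh_proper} to $\mathbb{P}^1_X \to S$; that theorem requires the structure morphism to $S$ to be proper, whereas $X$ is only essentially of finite type over $S$, so $\mathbb{P}^1_X \to S$ is in general not proper. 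What you actually need is a relative statement for the proper morphism $\mathbb{P}^1_X \to X$ (agreement of $Rpr_*$ in the tame and \'etale topologies together with the computation $Rpr_*\mathbb{Z}/p \cong \mathbb{Z}/p$ for $\mathbb{P}^1$ in characteristic $p$), which is not what is quoted and must be established separately.

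Second, and more seriously, in the $p$-torsion case the entire content of the theorem is the vanishing in \emph{all} degrees of the tame cohomology of $\mathbb{P}^1_X$ supported on $s_\infty(X)$ with $\mathbb{Z}/p$-coefficients --- equivalently, that restriction to the dense open $\mathbb{A}^1_X$ is an isomorphism on cohomology, i.e.\ the purity theorem quoted after Conjecture \ref{conjecture_purity} (H\"ubner--Schmidt Thm.~15.2), which is where resolution of singularities genuinely enters. Your inertia argument (``a $\mathbb{Z}/p$-cover has only $p$-inertia, hence is unramified along a tame boundary'') controls only $H^1$; you explicitly defer the higher degrees, but that deferral is where the theorem lives: \'etale-locally these local cohomology groups are huge already in degree $2$ because of wild Artin--Schreier covers of the punctured henselian disc, and one must show that tameness kills them in every degree, e.g.\ by showing that the tame inertia at infinity is pro-prime-to-$p$ and invoking the vanishing of its Galois cohomology with $p$-torsion coefficients, exactly as in the proof of Proposition \ref{proposition_purity_curves}. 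Relatedly, your d\'evissage from locally constant $\mathbb{F}_p$-sheaves to the constant sheaf assumes that every connected tame cover of $\mathbb{A}^1_X$ is pulled back from a tame cover of $X$; that is homotopy invariance of the tame fundamental group, i.e.\ the $H^1$ case of the theorem for arbitrary finite coefficients, so appealing to ``tame Abhyankar-type behaviour'' there begs the question and should instead be quoted (Kerz--Schmidt) or proved within the induction.
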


Finally, one expects full purity for locally constant coefficients and logarithmic deRham-Witt sheaves (see \cite[p.2]{Huebner2021}).
\begin{conjecture}\label{conjecture_purity}
Let $X$ be a regular scheme of finite type over $S$ with $S$ pure of characteristic $p>0$. Then the following statements hold.
\begin{enumerate}
\item For every locally constant $p$-torsion sheaf $F\in Sh_t(X/S)$ and $U\subset X$ a dense open subset, the natural map
$$H^q_t(X/S,F)\to H^q_t(U/S,F\mid_U)$$
is an isomorphism for all $q\geq 0$.
\item For $Z\subset X$ a regular closed subscheme of codimension $c$, there is an isomorphism
$$W_r\Omega_{Z,\log,t}^{n-c}\xto{\cong} Ri^! W_r\Omega_{X,\log,t}^{n}[c]$$
in $D^b((Z/S)_{t},\bb Z/p^r\bb Z)$.
\end{enumerate}
\end{conjecture}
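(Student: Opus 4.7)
The plan is to reduce both parts to cohomological vanishing statements for the local cohomology functor $Ri^!$. For (i), the localisation triangle
\[ i_*Ri^!F \to F \to Rj_*j^*F \to i_*Ri^!F[1] \]
(where $i$ is the inclusion of $Z := X\setminus U$ and $j$ of $U$) reduces the asserted restriction isomorphism to showing $Ri^!F = 0$ for locally constant $p$-torsion $F$ and $Z$ a regular closed subscheme of positive codimension; the general case follows by stratifying the possibly singular $Z$ by locally closed regular subschemes and inducting on the dimension of the strata. For (ii), the semi-purity statement of Prop. \ref{proposition_semi_purity_logdeRhamWiss} already supplies a canonical map and identifies its truncation in the lowest degrees, so full purity amounts to killing the higher cohomology sheaves $R^qi^!W_r\Omega^n_{X,\log,t}$ above the codimension.

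For (i), I would first devissage to $F = \bb Z/p\bb Z$ via the short exact sequences $0 \to \bb Z/p \to \bb Z/p^r \to \bb Z/p^{r-1} \to 0$ combined with a tame Galois cover trivialising $F$. The statement then becomes an Abhyankar-type extension property for $\bb Z/p$-torsors on the tame site: every such torsor on $X\setminus Z$ extends uniquely to $X$. Localising to the tame strict henselisations at points of $Z$, one is reduced to a computation on punctured regular local schemes. The key input is that the definition of the tame site via the $\Val_S$ condition already absorbs ramification along regular codimension-one subschemes into the covering requirement; Artin--Schreier--Witt theory then accounts for the wild/characteristic-$p$ $\bb Z/p$-covers, while the prime-to-$p$ case is handled by Theorem \ref{theorem_tame_coh_l_coef}. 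Carrying out this local analysis and propagating it globally is the first step.

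For (ii), I would argue by induction on $\dim X$, with the base case $\dim X = 1$ supplied by Prop. \ref{proposition_purity_curves}. For the inductive step, I would employ the coniveau spectral sequence on $(X/S)_t$, whose $E_1$-terms should (conjecturally) be computed by the tame Gersten complex of $W_r\Omega^n_{X,\log,t}$. Invoking the Gersten conjecture in the tame topology (Conjecture \ref{conjecture_Gersten_logdRW}) identifies these $E_1$-terms with tame cohomology of residue fields at points of $X$, shifted by codimension; combined with strict $\bb A^1$-invariance (Theorem \ref{theorem_A1_invariance_tame_coh}) applied after localising along $Z$ and with the inductive hypothesis applied at the generic point of $Z$, this should force $R^qi^!W_r\Omega^n_{X,\log,t}$ to vanish for $q$ exceeding $c$.

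The hard part will be part (ii): establishing the Gersten conjecture for $W_r\Omega^n_{X,\log,t}$ in dimensions $\geq 2$. The introduction flags this as a consequence of strict $\bb A^1$-invariance, but the precise cohomological input in higher dimensions is exactly the work in progress of Koubaa referred to after the first theorem. Correspondingly, part (i) is closer to reach, as it reduces to tame fundamental group computations on regular henselian schemes where Abhyankar-type results are already well developed; part (ii) is deeper and must await the completion of the Gersten/$\bb A^1$-invariance machinery in the tame topology.
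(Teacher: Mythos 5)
This statement is a \emph{conjecture} in the paper: no proof is given, and none is claimed. The paper records only partial evidence, namely the theorem quoted from H\"ubner--Schmidt (which establishes a version of (i) under the additional hypothesis that resolution of singularities holds over $S$), semi-purity (Proposition \ref{proposition_semi_purity_logdeRhamWiss}, i.e.\ the statement after applying $\tau_{\leq 0}$), and full purity for curves (Proposition \ref{proposition_purity_curves}); the general case of (ii) is explicitly attributed to work in progress by Koubaa. So any complete ``proof'' would be resolving an open problem, and your sketch does not do that --- indeed you concede as much in your final paragraph. Beyond that, there are two concrete problems. For (i), reducing via the localisation triangle to $Ri^!F=0$ is the right first move, but your subsequent reduction to an Abhyankar-type extension property for torsors only controls $R^1i^!$; the conjecture requires vanishing of $R^qi^!F$ for all $q$, and it is precisely in controlling the higher tame cohomology of punctured henselian schemes that the known argument needs resolution of singularities, for which you offer no substitute.

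For (ii) your strategy is circular relative to the paper's logical architecture. The identification of the $E_1$-terms of the coniveau spectral sequence (which are local cohomology groups $H^{*}_x(X,W_r\Omega^n_{X,\log,t})$) with tame cohomology of residue fields \emph{is} the purity statement you are trying to prove: see Corollary \ref{corollary_purity}(iii), which makes exactly this identification ``under the assumption of (ii)''. Conjecture \ref{conjecture_Gersten_logdRW} is formulated by the author as a \emph{consequence} of purity, and the proof of the curve case of Gersten (Proposition \ref{proposition_Gersten_curve}) takes purity for curves (Proposition \ref{proposition_purity_curves}) as input --- the implication runs purity $\Rightarrow$ Gersten, not the reverse. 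You also invoke Theorem \ref{theorem_A1_invariance_tame_coh}, which applies to locally constant torsion sheaves and hence not to $W_r\Omega^n_{X,\log,t}$; the relevant statement the paper actually uses is Proposition \ref{proposition_strict_A1invariance}, itself a consequence of purity for curves. The mechanism that actually proves the known case of (ii) is a direct stalk computation: one passes to the punctured henselisation, compares tame with strongly \'etale cohomology, and uses that the relevant tame Galois group has order prime to $p$ while the coefficients are $p$-torsion. The honest open problem is the higher-dimensional analogue of that local vanishing, and your proposal does not engage with it.
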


\begin{theorem}\cite[Thm. 15.2]{HuebnerSchmidt2021}
Let $S$ be an affine noetherian scheme of characteristic $p>0$ and $X$ be a regular scheme which is separated and essentially of finite type over $S$. Assume resolution of singularities holds over $S$. Then for any open dense subscheme $U\subset X$ and every locally constant $p$-torsion sheaf $F\in Sh_t(X/S)$ the natural map
$$H^q_t(X/S,F)\to H^q_t(U/S,F\mid_U)$$
is an isomorphism for all $q\geq 0$.
\end{theorem}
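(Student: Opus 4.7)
The plan is to deduce this full-purity statement from $\bb A^1$-invariance (Theorem \ref{theorem_A1_invariance_tame_coh}) combined with resolution of singularities, via a two-stage d\'evissage that reduces to the case of a smooth divisor. The restriction map is an isomorphism for all $q$ precisely when the tame cohomology with supports $R\Gamma_{Z,t}(X/S, F) \simeq 0$ for $Z := X \setminus U$, so it suffices to prove this vanishing.

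First I would run Noetherian induction on $Z$, stratifying $Z = Z_{\mathrm{reg}} \sqcup Z_{\mathrm{sing}}$ and applying the excision triangle
\[
R\Gamma_{Z_{\mathrm{sing}},t}(X/S, F) \to R\Gamma_{Z,t}(X/S, F) \to R\Gamma_{Z_{\mathrm{reg}},t}(X \setminus Z_{\mathrm{sing}}/S, F)
\]
to reduce to $Z$ regular of pure codimension $c \geq 1$. For $c \geq 2$, working Zariski-locally on $X$ the regularity of $Z \subset X$ provides a regular divisor $D$ containing $Z$; the excision triangle $R\Gamma_{Z,t} \to R\Gamma_{D,t} \to R\Gamma_{D \setminus Z,t}(X \setminus Z, F)$, combined with the codimension-one statement applied to both $D \subset X$ and to $D \setminus Z \subset X \setminus Z$, yields the vanishing in codimension $c$. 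The problem therefore reduces to the codimension-one case of a smooth divisor.

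For this case I would work in \'etale-local coordinates: the regularity of $X$ and $Z$ provides an \'etale-local presentation of the pair $(X, Z)$ as $(\bb A^1_{Z_0}, \{0\} \times Z_0)$ for some regular $Z_0$. The localization triangle
\[
R\Gamma_{Z,t}(\bb A^1_{Z_0}/S, F) \to R\Gamma_t(\bb A^1_{Z_0}/S, F) \to R\Gamma_t(\bb G_{m,Z_0}/S, F)
\]
together with $\bb A^1$-invariance identifies the middle term with $R\Gamma_t(Z_0/S, F|_{Z_0})$, so the vanishing becomes equivalent to showing that the restriction along the unit section $R\Gamma_t(Z_0/S, F|_{Z_0}) \to R\Gamma_t(\bb G_{m,Z_0}/S, F)$ is an isomorphism. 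The decisive input is that in characteristic $p$ the tame inertia at the origin of $\bb A^1$ is pro-prime-to-$p$ and hence acts trivially on the $p$-torsion sheaf $F$: this means $F$ on $\bb G_{m,Z_0}$ is pulled back from $Z_0$, and a \v{C}ech argument using the tame Kummer covers $t \mapsto t^n$ with $\gcd(n,p)=1$ together with $\bb A^1$-invariance on each piece closes the computation.

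The main obstacle is exactly this last step: the punctured-affine-line computation is morally equivalent to the codimension-one purity statement, so one must set up the \v{C}ech/homotopy argument so that it really terminates rather than loops back. A secondary subtlety is that tame cohomology is defined relative to the base $S$, so one must verify throughout that the \'etale-local reductions and the tame Kummer covers respect the $S$-valuation condition built into the definition of tame covers, and that resolution of singularities delivers a local model compatible with the relative structure over $S$.
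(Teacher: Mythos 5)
First, note that the paper does not prove this statement at all: it is quoted verbatim from \cite[Thm.\ 15.2]{HuebnerSchmidt2021}, so there is no internal proof to compare your argument against. Your proposal must therefore stand on its own, and as written it has two genuine gaps. The first is a false step: you justify triviality of the local monodromy by saying that the tame inertia at the origin is pro-prime-to-$p$ ``and hence acts trivially on the $p$-torsion sheaf $F$.'' This inference is wrong --- a pro-prime-to-$p$ group can act nontrivially on a $p$-torsion module, since $\Aut(\bb Z/p)\cong\bb Z/(p-1)$ has order prime to $p$; indeed $\bb G_{m,k}$ in characteristic $p$ carries nontrivial rank-one locally constant $\bb Z/p$-sheaves with prime-to-$p$ monodromy. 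The correct (and available) reason the monodromy at the origin is trivial is that the hypothesis puts $F$ locally constant on all of $X$, not merely on $U$, so its restriction to the punctured neighbourhood extends across the divisor. You should invoke that, not the order of the inertia group.

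The second and more serious gap is the one you yourself flag: after the d\'evissage everything rests on showing that $R\Gamma_t(Z_0/S,F)\to R\Gamma_t(\bb G_{m,Z_0}/S,F)$ is an isomorphism, and this \emph{is} the codimension-one purity statement in local coordinates; the \v{C}ech argument with Kummer covers $t\mapsto t^n$, $(n,p)=1$, computes at best $H^1$ (torsors) and does not obviously control the higher tame cohomology of $\bb G_m$, so the argument as proposed does not terminate. There is also a structural circularity risk: you take Theorem \ref{theorem_A1_invariance_tame_coh} (= \cite[Thm.\ 15.4]{HuebnerSchmidt2021}) as the input from which to derive \cite[Thm.\ 15.2]{HuebnerSchmidt2021}, but the numbering indicates that in the source the purity/excision statement is established \emph{before} homotopy invariance, and homotopy invariance is obtained from it together with the $\bb P^1$-computation; deducing 15.2 from 15.4 therefore likely inverts the logical order of the actual proof. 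The genuine content of the theorem --- which your sketch does not supply --- is the vanishing of the higher tame cohomology of the relevant (strictly) henselian local situations at the boundary, which H\"ubner--Schmidt extract from resolution of singularities via a compactification and comparison argument, not from $\bb A^1$-invariance.
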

We will say more about Conjecture \ref{conjecture_purity}(ii) in Section \ref{section_logdeRhamWitt}.

\subsection{Rigidity}
We recall a few facts about henselian pairs.
\begin{definition}
\label{def:henspair}
A \emph{pair} is the data $(S, I)$ where $S$ is a commutative ring and $I \subset S$ is
an ideal. A pair $(S, I)$ is said to be \emph{henselian} if the following equivalent
(cf.~\cite[Tag 09XD]{stacks-project} for the equivalence) conditions hold:
\begin{enumerate}
\item
Given a polynomial $f(x) \in S[x]$ and a root $\overline{\alpha} \in S/I$ of $\overline{f} \in (S/I)[x]$ with
$\overline{f}'(\alpha)$ being a unit of $S/I$, then $\overline \alpha$ lifts to a root $\alpha \in S$ of $f$.
\item The ideal $I$ is contained in the Jacobson radical of $S$, and the same
condition as (1) holds for \emph{monic} polynomials  $f(x) \in S[x]$.
\item Given any commutative diagram
\begin{equation} \label{lifthens}  \xymatrix{
A \ar[d] \ar[r] &  S \ar[d]  \\
B \ar[r] \ar@{-->}[ru] &  S/I\
}\end{equation}
with $A \to B$ \'etale, there exists a lift as in the dotted arrow.

\end{enumerate}

If $S$ is local with maximal ideal $\frak m$, then $S$ is said
to be a \emph{henselian local ring} if $(S, \frak m)$ is a henselian pair.

We call a map of rings $f:R_2\to R_1$ a henselian surjection if $f$ is surjective and $(R_2,\mathrm{ker}(f))$ is a henselian pair.
\end{definition}

\begin{remark}\label{remark_ideals_hs}
\begin{enumerate}
\item If $I$ and $J$ are ideals of $S$ and $I\subset J$, then $(S,I)$ is a henselian pair if $(S,J)$ is a henselian pair.
\item If $(R,I)$ is a henselian pair, and $R\to S$ an integral ring homomorphism, then the pair $(S,IS)$ is henselian.
\end{enumerate}
\end{remark}


From now on let $S$ be an integral, pure dimensional separated and excellent scheme and let $X$ be a noetherian scheme separated and essentially of finite type over $S$.
Note that we do not assume $X\to S$ to be of finite type. For an integral $X$ we let
$$\dim_SX:= \mathrm{deg.tr.}(k(X)/k(T))+\dim_{\rm  Krull}T,$$
where $T$ is the closure of the image of $X$ in $S$. We call $X$ an $S$-curve if $\dim_SX=1$ and for a regular connected $S$-curve $C$ we let $\bar{C}$ be the unique regular compactification of $C$ over $S$. For the following definition see \cite{KerzSchmidt2010}.
\begin{definition}
Let $Y\to X$ be an \'etale cover of separated, essentially of finite type $S$-schemes. We say that $Y\to X$ is curve-tame if for any morphism $C\to X$ with $C$ a regular $S$-curve, the base change $Y\times_XC\to C$ is tamely ramified along $\bar{C}\setminus C$. 
\end{definition}

\begin{proposition}\label{proposition_rigidity}
Let $S$ be a henselian discrete valuation ring with maximal ideal $\mathfrak{m}_S=(\pi)$. Let $A$ be a separated essentially of finite type $S$-algebra such that $(A,I=(\pi)A)$ is a henselian pair. Then the functor
$$B\mapsto B\otimes_AA/I$$
induces an equivalence between the category of finite curve-tame covers $\Spec(B)\to \Spec(A)$ and finite curve-tame covers $\Spec(B/I)\to \Spec(A/I)$.
\end{proposition}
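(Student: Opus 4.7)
The plan is to reduce to the classical Gabber--Elkik rigidity theorem for finite \'etale covers and then show that curve-tameness is preserved in both directions under the resulting equivalence. For the first step, since $(A, I = \pi A)$ is a henselian pair, \cite[Tag 09ZL]{stacks-project} yields that the reduction functor $B \mapsto B \otimes_A A/I$ is already an equivalence between finite \'etale $A$-algebras and finite \'etale $A/I$-algebras. It remains to verify that this equivalence restricts to the full subcategory of curve-tame covers on each side.

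The forward implication is immediate. If $\Spec B \to \Spec A$ is curve-tame and $C_0 \to \Spec A/I$ is any regular $S$-curve, then the composition $C_0 \to \Spec A/I \hookrightarrow \Spec A$ is also a regular $S$-curve, and
$$ \Spec B \times_{\Spec A} C_0 \;\simeq\; \Spec(B/IB) \times_{\Spec A/I} C_0. $$
Hence the required tame ramification on $\bar C_0 \setminus C_0$ is inherited from the curve-tameness of $B/A$.

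For the backward implication, start with a curve-tame cover $\bar B / (A/I)$ and let $B/A$ be its unique \'etale lift provided by Gabber. Given a regular $S$-curve $\phi : C \to \Spec A$ and a boundary point $c \in \bar C \setminus C$, we must check that the pullback $Y := \Spec B \times_A C \to C$ is tamely ramified at $c$. Since $S$ is a henselian DVR, $C$ is either vertical (then $\phi$ factors through $\Spec A/I$ and tameness follows immediately from the hypothesis on $\bar B$) or horizontal, i.e.\ dominates $S$. In the horizontal case, boundary points of $\bar C$ over the generic point of $S$ lie in a characteristic-$0$ locus (in the mixed-characteristic setting that motivates the paper), hence tameness there is automatic. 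For a boundary point $c$ lying over the closed point of $S$, pass to the henselian local ring $\roi_{\bar C, c}^h$; the pair $(\roi_{\bar C, c}^h, (\pi))$ is again henselian, so a second application of Gabber's rigidity shows that the formal neighborhood of $c$ together with the cover $Y \to C$ over it is determined by its reduction modulo $\pi$. That reduction is precisely the pullback of $\bar B$ along the corresponding boundary point of the closed-fibre curve $C \times_S \Spec(R/\pi)$, and hence is tame by hypothesis.

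The main obstacle is the final identification in the backward direction: the reduction $C \times_S \Spec(R/\pi)$ of a regular horizontal $S$-curve need not be regular (or even reduced), so one cannot invoke curve-tameness of $\bar B$ directly against it. The standard remedy is to replace it by its regular compactified normalization and appeal to the Kerz--Schmidt comparison between curve-tame, divisorially tame, and numerically tame covers \cite{KerzSchmidt2010}, which allows tameness at a vertical boundary point to be checked after such a birational modification. A secondary (minor) subtlety is the equal-characteristic-$p$ case, where horizontal boundary points of $\bar C$ over the generic point of $S$ also have residue characteristic $p$ and require the same henselian-rigidity argument applied one step earlier; this does not change the structure of the proof, only the bookkeeping.
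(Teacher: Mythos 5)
Your opening move (Gabber's rigidity for finite \'etale covers of the henselian pair $(A,I)$) and your forward implication coincide with the paper's. The backward implication, however, contains a genuine gap -- one you flag yourself but do not close -- and it concerns exactly the horizontal curves. If $\varphi:C\to\Spec(A)$ is a regular $S$-curve dominating $S$, then $\mathrm{deg.tr.}(k(C)/K)=0$, so $\bar C$ is finite over the henselian DVR $S$; in particular all boundary points of $\bar C$ lie over the closed point of $S$ (your ``characteristic-zero boundary points over the generic point'' case is vacuous), and the special fibre $C\times_S\Spec(\roi_S/\pi)$ is \emph{zero-dimensional}. It is therefore not a regular $S$-curve, and curve-tameness of $\bar B/(A/I)$ imposes no condition on it; this is worse than the non-regularity you acknowledge. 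Your ``second application of Gabber's rigidity'' at $\roi^h_{\bar C,c}$ also does not do what you want: Gabber's theorem governs finite \'etale covers of the whole local scheme $\Spec(\roi^h_{\bar C,c})$, whereas $Y\to C$ is only defined away from the boundary point $c$, and whether it extends (equivalently, is unramified or tame at $c$) is precisely the question at issue. Deferring the repair to a Kerz--Schmidt comparison after a birational modification is a proposal, not an argument.

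The paper disposes of the horizontal case by a different and much cheaper mechanism, which is the real content of the proof and which you are missing: it uses the henselianness of the pair $(A,(\pi)A)$ \emph{directly}, via the lifting property of Definition \ref{def:henspair}(iii), to argue that a morphism $\varphi:C\to\Spec(A)$ from a regular $S$-curve either factors through the special fibre $\Spec(A/I)$ or extends over the compactification $\bar C$. In the latter case the pullback of the finite \'etale cover $\Spec(B)\to\Spec(A)$ extends \'etale-ly across $\bar C\setminus C$, so tame ramification there is automatic and horizontal curves impose no condition at all. Curve-tameness of $B/A$ is therefore detected by curves landing in $\Spec(A/I)$, where the equivalence with curve-tameness of $B/IB$ over $A/I$ is immediate. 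In your write-up the henselian hypothesis on $A$ enters only through Gabber's equivalence; without this second, direct use of it, your treatment of horizontal curves does not go through.
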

\begin{proof}
By \cite{GabberAffineAnalogue} there is an equivalence between the category of finite \'etale covers $\Spec(B)\to \Spec(A)$ and finite \'etale covers $\Spec(B/I)\to \Spec(A/I)$.
We have to show that curve-tameness is preserved under this equivalence. But since $A$ is henselian along $I$, by Definition \ref{def:henspair}(iii) we have that for any regular $S$-curve $C$ and morphism $\varphi:C\to \Spec(A)$ we have that either $im(\varphi)$ is contained in $\Spec(A/I)$ or $\overline{im(\varphi)}$ is contained in $\Spec(A)$. Therefore curve-tameness can be checked by $S$-curves $C$ and morphisms $\varphi:C\to \Spec(A)$ whose image is contained in $\Spec(A/I)$.
\end{proof}

\begin{corollary}
Let $S$ be a henselian discrete valuation ring with maximal ideal $\mathfrak{m}_S=(\pi)$. Let $A$ be a separated essentially of finite type $S$-algebra such that $(A,I=(\pi)A)$ is a henselian pair with $A$ and $A/I$ geometrically unibranch. Then the functor
$$B\mapsto B\otimes_AA/I$$
induces an equivalence between the category of finite tame Galois covers $\Spec(B)\to \Spec(A)$ and finite tame Galois covers $\Spec(B/I)\to \Spec(A/I)$.
\end{corollary}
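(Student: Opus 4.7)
The plan is to deduce this corollary from Proposition \ref{proposition_rigidity} by observing that two extra things hold under the added hypotheses: (a) the equivalence of Proposition \ref{proposition_rigidity} preserves the property of being Galois, and (b) on the geometrically unibranch schemes $\Spec(A)$ and $\Spec(A/I)$ the notions of finite tame cover and finite curve-tame cover coincide.

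For (a), I would argue categorically. Any equivalence of Galois categories preserves the subcategory of Galois objects, since a finite \'etale cover $Y\to X$ is Galois if and only if $\#\Aut_X(Y)=\deg(Y/X)$, and both the automorphism group and the degree are preserved by an equivalence of finite \'etale sites. Concretely, given $\Spec(B)\to \Spec(A)$ with reduction $\Spec(B/I)\to\Spec(A/I)$, the natural map $\Aut_A(B)\to \Aut_{A/I}(B/I)$ is a bijection by the full faithfulness of Gabber's henselian rigidity, and the degree is preserved trivially; so one side is Galois iff the other is.

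For (b), I would invoke the comparison between tame covers (in the valuation-theoretic sense of \cite{HuebnerSchmidt2021}) and curve-tame covers (in the sense of \cite{KerzSchmidt2010}). For geometrically unibranch schemes over an excellent base these two notions agree on finite \'etale covers: curve-tameness is always implied by tameness, and the converse uses the geometrically unibranch hypothesis to reduce the vanishing of higher ramification to the curve case via Abhyankar-type lemmas. Since $A$ and $A/I$ are both geometrically unibranch by assumption, the category of finite tame Galois covers of $\Spec(A)$ (resp.\ $\Spec(A/I)$) coincides with the category of finite curve-tame Galois covers, and Proposition \ref{proposition_rigidity} combined with (a) then yields the claimed equivalence.

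The main obstacle I anticipate is step (b): verifying cleanly that finite tame and finite curve-tame Galois covers coincide over a geometrically unibranch base in the generality of an essentially of finite type $S$-algebra with mixed characteristic behavior. If no direct citation is available, I would argue as follows: given a finite curve-tame Galois cover, tameness in the Hübner--Schmidt sense amounts to tame ramification along every $S$-valuation of the function field; under the geometrically unibranch hypothesis every such valuation centered on $\Spec(A)$ is dominated by a valuation coming from a regular $S$-curve $C\to \Spec(A)$, so curve-tameness along $\bar{C}\setminus C$ forces tameness of the original valuation. This reduction, together with points (a) and Proposition \ref{proposition_rigidity}, completes the proof.
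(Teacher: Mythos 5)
Your proposal follows essentially the same route as the paper: the paper's proof simply cites the proof of \cite[Prop.~5.2]{HuebnerSchmidt2021} for the fact that a finite \'etale Galois cover is tame if and only if it is curve-tame (this is where the geometrically unibranch hypothesis enters), and then invokes Proposition \ref{proposition_rigidity}. Your steps (a) and (b) are exactly this decomposition, with (a) left implicit in the paper and (b) handled by citation rather than by your sketched valuation-domination argument, which is the only place where your write-up is looser than what the cited result actually provides.
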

\begin{proof}
It is shown in the proof of \cite[Prop. 5.2]{HuebnerSchmidt2021} that a finite \'etale Galois cover is tame if and only if it is curve tame. The corollary therefore follows from Proposition \ref{proposition_rigidity}. 
\end{proof}

\begin{conjecture}\label{conjecture_affine_anal_pbs}
Let $S$ be a henselian discrete valuation ring with maximal ideal $\mathfrak{m}_S=(\pi)$. Let $A$ be a separated essentially of finite type $S$-algebra such that $(A,I=(\pi)A)$ is a henselian pair. Let $i:\Spec A/I\to \Spec A$ be the corresponding inclusion. Let $F\in Sh((\Spec A/S)_t)$. Then the restriction map
$$H^{s}(\Spec A,F)\cong H^{s}(\Spec A/I,i^*F)$$
is an isomorphism for all $s$.
\end{conjecture}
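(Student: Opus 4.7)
The approach will be to adapt Gabber's proof of the affine analogue of proper base change for étale cohomology (cf.\ \cite{GabberAffineAnalogue}) to the tame topology, replacing the use of rigidity for finite étale covers by the tame rigidity established in Proposition \ref{proposition_rigidity} and its corollary. Concretely, I would aim to show that the adjunction unit $F \to Ri_* i^* F$ is a quasi-isomorphism in $D^+((\Spec A/S)_t)$ for every abelian sheaf $F$ on the tame site of $\Spec A$; passing to global sections then yields the conjectured isomorphism. By a standard reduction, it suffices to treat the case where $F$ is a constant torsion sheaf (and, more generally, an inductive limit of representable sheaves associated to finite tame étale covers), since the local question is stable under filtered colimits.

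First I would reduce the quasi-isomorphism to a stalkwise statement. By analogy with the étale setting, each geometric point $\bar x$ of $\Spec A/I$ equipped with a compatible $S$-valuation ought to give rise to a \textit{tame strict henselization} $A^{\mathrm{tsh}}_{\bar x}$ of $A$, defined as the filtered colimit of coordinate rings of pointed finite tame étale neighborhoods of $\bar x$. By Remark \ref{remark_ideals_hs} and the compatibility of henselianness with filtered colimits of integral extensions, the pair $(A^{\mathrm{tsh}}_{\bar x}, I\cdot A^{\mathrm{tsh}}_{\bar x})$ remains henselian. Then Proposition \ref{proposition_rigidity}, applied level-wise in its Galois-theoretic form, would identify the pro-system of pointed finite tame étale neighborhoods of $\bar x$ in $\Spec A$ with its base change to $\Spec A/I$, yielding $A^{\mathrm{tsh}}_{\bar x} \otimes_A A/I \cong (A/I)^{\mathrm{tsh}}_{\bar x}$. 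Since every finite tame étale cover of $A^{\mathrm{tsh}}_{\bar x}$ splits by construction, tame Čech cohomology of constant sheaves on $A^{\mathrm{tsh}}_{\bar x}$ vanishes in positive degrees, from which one deduces both the vanishing $R^s i_* i^* F = 0$ for $s \geq 1$ and the stalk identification $(i^* F)_{\bar x} \cong F_{\bar x}$.

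The main obstacle will be setting up the theory of tame strict henselizations inside the framework of the Hübner-Schmidt tame site. Unlike the étale site, the tame site carries the valuation-theoretic data $\Val_S$ as part of its covering condition, so it is not immediate that the tame topos has enough points in the naive sense, nor that they admit the clean pro-object description sketched above. A second difficulty is that Proposition \ref{proposition_rigidity} handles only \emph{finite} curve-tame covers, whereas the tame site also allows non-finite étale covers with a valuation-theoretic tameness condition; one must therefore bootstrap the finite rigidity to rigidity for arbitrary tame coverings, either by a limit argument approximating general tame covers by finite ones or through a separate descent step. A sensible first case is that of constant torsion sheaves $F = \bb Z/n\bb Z$, where the Čech machinery and Proposition \ref{proposition_rigidity} apply most directly and already illustrate the expected mechanism.
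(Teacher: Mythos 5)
The statement you are proving is left as an open conjecture in the paper (Conjecture \ref{conjecture_affine_anal_pbs}); the author offers no proof, only the supporting rigidity statement Proposition \ref{proposition_rigidity} and its corollary, so there is no argument of the paper's to compare yours against. Your overall strategy --- adapt Gabber's proof of the affine analogue of proper base change, with tame rigidity replacing rigidity of finite \'etale covers --- is the natural one and is surely what the author has in mind. But as written your reduction contains a genuine error: the unit $F \to Ri_*i^*F$ is \emph{not} a quasi-isomorphism of complexes of sheaves, even in the \'etale case. For any point of $\Spec A$ lying outside $V(I)$ the stalk of $Ri_*i^*F$ vanishes (pushforward along a closed immersion is supported on the closed subscheme), while the stalk of $F$ there is arbitrary; concretely $F=j_!\,\bb Z/p$ for $j$ the open complement of $V(I)$ gives $Ri_*i^*F=0\neq F$. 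The correct assertion is only that $R\Gamma(\Spec A,F)\to R\Gamma(\Spec A/I,i^*F)$ is an equivalence, and this is an irreducibly global statement: Gabber's proof does not proceed by a stalkwise computation of the unit but by a d\'evissage through constructible sheaves, finite morphisms, and \v{C}ech/hypercover arguments. So the step ``reduce to a stalkwise statement'' must be abandoned, and with it the main simplification your sketch relies on.

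Two further points. First, your reduction ``it suffices to treat constant torsion sheaves'' presupposes a theory of constructible sheaves and a d\'evissage on the tame site that has not been set up (and note the conjecture as stated allows arbitrary $F\in Sh((\Spec A/S)_t)$; some torsion hypothesis is certainly needed, since the \'etale analogue fails already for $\bb G_m$ or $\bb Z$ coefficients). Second, the obstacles you flag at the end are real and are the actual content of the problem: Proposition \ref{proposition_rigidity} only controls \emph{finite} curve-tame covers, whereas tame coverings in the sense of H\"ubner--Schmidt are cut out by a valuation-theoretic condition on all of $\Val_S$, and one must also track which $S$-valuations on $\Spec A$ survive restriction to $\Spec A/I$. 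Your local analysis via ``tame strict henselizations'' is a reasonable ingredient for computing stalks and for the constant-sheaf case on the closed fibre, but it cannot by itself deliver the global comparison. As it stands the proposal is a plausible research plan with the key step misidentified, not a proof.
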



\section{Some homological algebra}\label{section_homological_algebra}
Let $A$ be a discrete valuation ring with residue field $k$ and fraction field $K$.
Let $Y$ be the special fiber of $X$ and $X_K$ the generic fiber.
In this section we consider the commutative diagram
$$\xymatrix{
\bb A^1_{X_K} \ar@{^{(}->}[r]^{j'} \ar[d]_{\pi_K} & \bb A^1_{X}\ar[d]^{\pi} & \bb A^1_{Y} \ar@{_{(}->}[l]_{i'} \ar[d]^{\pi_k} \\
X_K \ar@{^{(}->}[r]^j  & X  & Y  \ar@{_{(}->}[l]_{i}
}$$
and recollect some facts about the commutativity of the corresponding derived functors and the change of sites (see also \cite[Sec. 2]{Geisser2004}).
We denote by $X_{\et}, (X/S)_t$, $X_{\Nis}$ and $X_{\rm Zar}$ the \'etale, tame, Nisnevich and Zariski sites respectively. In everything that follows, the Nisnevich topology may be replaced by the Zariski topology.
There are natural morphisms of sites
$$X_{\et}\xrightarrow{\alpha}(X/S)_\t \xrightarrow{\beta} X_{\Nis}, \quad \quad X_{\et}\xrightarrow{\epsilon}X_{\Nis},$$
as well as
$$X_{K,\et}\xrightarrow{\alpha}(X_K/K)_t \xrightarrow{j_1}(X_K/S)_t \xrightarrow{j_2} (X/S)_\t, \quad \quad (X_K/K)_t \xrightarrow{j} (X/S)_\t$$
and
$$Y_{\et}\xrightarrow{\alpha}(Y/k)_t \xrightarrow{\cong}(Y/S)_\t \xrightarrow{i} (X/S)_\t.$$
In the following all sheaves will be sheaves of abelian groups and the derived functors will be between the derived categories of sheaves of abelian groups. All statements also hold for $j$ replaced by $j_1$ or $j_2$ and we drop the index for ease of exposition.
When there is no risk of confusion, we write $Rj_*$ (resp. $Ri_*$) for the derived pushforward without mentioning the topology we are working with.

The following proposition is well known for the \'etale and Nisnevich topologies.
\begin{proposition}\label{proposition_exactness_functors}
\begin{enumerate}
	\item The sheafification functors
		$\epsilon^*,\alpha^*,\beta^*$ are exact and
		preserve colimits.
\item The forgetful functors $\epsilon_*,\alpha_*,\beta_*$ are left exact, preserve injectives and limits.
\item There are mutually adjoint functors $i^*\vdash i_*\vdash i^!$ and $j_!\vdash j^*\vdash j_*$. In particular, $i^!,j_*$ are left exact, $i_*,j^*$ are exact and in our situation $i^*,j_!$ are exact. Consequently, $i_*,i^!,j^*,j_*$ preserve limits and injectives, and $i^*,i_*,j_!,j^*$ preserve colimits.
\end{enumerate}
\end{proposition}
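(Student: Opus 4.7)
The approach is to reduce everything to standard topos theory, examining carefully only the properties that are specifically about the tame site, since the analogous statements for $X_{\et}$ and $X_{\Nis}$ are classical.

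For (i) and (ii), the three sites $X_{\et}$, $(X/S)_t$, $X_{\Nis}$ all share the same underlying category of étale $X$-schemes, equipped with progressively finer topologies (every Nisnevich cover is tame, every tame cover is \'etale). Hence each of $\epsilon, \alpha, \beta$ is a morphism of sites induced by the identity on underlying categories, and the associated pullback functors $\epsilon^*,\alpha^*,\beta^*$ are nothing but the sheafification functors from the coarser to the finer topology. Sheafification is exact and colimit-preserving by standard topos theory (e.g.\ via the plus construction), which establishes (i). The pushforwards $\epsilon_*,\alpha_*,\beta_*$ are the right adjoints; as such they preserve limits and are left exact, and they preserve injectives by the general categorical fact that the right adjoint of an exact functor between abelian categories with enough injectives preserves injectives. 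This gives (ii).

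For (iii), I would set up the recollement formalism for the tame site in analogy with the étale case. For the open immersion $j$, the pullback $j^*$ is restriction to the generic fiber, exact because étale morphisms to $X_K$ are precisely étale morphisms to $X$ factoring through $X_K$, and the tame covering condition restricts cleanly. Its left adjoint $j_!$ (extension by zero) is exact, and its right adjoint $j_*$ is left exact as a right adjoint. For the closed immersion $i$, the pushforward $i_*$ is exact because sheaves in its essential image are supported on $Y$; the left adjoint $i^*$ exists and is exact by the usual stalk computation; and $i^!$ exists as the right adjoint to $i_*$, hence is left exact. All remaining preservation statements then follow formally from the adjunctions exactly as in (ii).

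The main obstacle, and the only point requiring care beyond the \'etale case, is verifying that the closed-immersion functors behave correctly for the tame topology. In particular, the exactness of $i^*$ and $i_*$ rests on a comparison of stalks: at a geometric point of $Y$, the stalk of a tame sheaf on $X$ should agree with the stalk of its restriction to $Y$. This is exactly the sort of local input provided by the rigidity results of the previous subsection (Proposition~\ref{proposition_rigidity} and its corollary), which identify tame covers of henselian pairs with tame covers of their closed fibers. Once this local compatibility is in place, the entire formal machinery goes through in parallel with the \'etale theory.
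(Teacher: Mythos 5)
Your treatment of (i) and (ii) matches the paper's: the three sites share the underlying category $\Cat(X_{\et})$, so $\epsilon^*,\alpha^*,\beta^*$ are sheafification functors (exact, colimit-preserving, since the functors on underlying categories preserve finite limits), and the pushforwards inherit left exactness, limit- and injective-preservation as right adjoints of exact functors. For (iii) your overall recollement strategy is also the one the paper follows, but there is one concrete misstep: you locate the ``only point requiring care'' in a stalk comparison for the closed immersion and claim it is supplied by the rigidity results (Proposition~\ref{proposition_rigidity} and its corollary). That proposition does not do this job. It concerns finite \emph{curve-tame} covers of a henselian pair $(A,\pi A)$ over a henselian discrete valuation ring, i.e.\ it compares covers of $\Spec A$ with covers of the special fiber $\Spec A/\pi A$ in a very particular situation; it says nothing about an arbitrary locally closed immersion $i:Z\hookrightarrow X$ (higher codimension, equal characteristic, non-henselian base), and it is a statement about finite covers rather than the local exactness criterion one actually needs.

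The ingredient the paper uses instead is that exactness of sequences of tame sheaves can be checked stalkwise (\cite[Lem.~2.10]{HuebnerSchmidt2021}), together with \cite[Lem.~2.5]{HuebnerSchmidt2021} for the adjunction $j_!\vdash j^*\vdash j_*$; with the stalkwise criterion in hand, Milne's proof of the triple theorem \cite[Ch.~II, Thm.~3.10]{Mi80} carries over verbatim to the tame site, and the remaining preservation statements follow formally from the adjunctions as you say. So the architecture of your argument is right, but you should replace the appeal to rigidity by the appeal to the existence of a conservative family of points (stalks) for the tame topos; as written, the justification for the exactness of $i^*$ and $i_*$ rests on a result that does not apply.
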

\begin{proof}
(i) Since the underlying categories of $X_{\et}$ and $(X/S)_t$ are the same, $\Cat((X/S)_t)$ has finite limits and the $\beta^\sharp: \Cat(X_\Nis)\to \Cat((X/S)_t)$ and $\alpha^\sharp:\Cat((X/S)_t)\to \Cat(X_\et)$ preserve finite limits. The statement follows from \cite[Ch. II, Prop. 2.6, §3]{Mi80}.
	
	(ii) This follows from the functors $\epsilon_*,\alpha_*,\beta_*$ being right adjoint to the functors
		$\epsilon^*,\alpha^*,\beta^*$ and from the fact that these, by (i), are exact.

(iii) Again, this is well-known for the \'etale and Nisnevich topologies. In the tame topology, the arguments are the same. The adjunction $j_!\vdash j^*\vdash j_*$ is shown in \cite[Lem. 2.5]{HuebnerSchmidt2021}. For the adjunction $i^*\vdash i_*\vdash i^!$ one notes that the proof of the triple theorem in the \'etale topology \cite[Ch. II, Thm. 3.10]{Mi80} goes through exactly the same way for the tame topology using that the exactness of tame sheaves can be checked stalkwise \cite[Lem. 2.10]{HuebnerSchmidt2021}. 
\end{proof}

\begin{proposition}\label{proposition_commutation_pushforward}
There are isomorphisms of derived functors
\begin{enumerate}
\item
$\epsilon^*j^*\xto{\sim} j^*\epsilon^* \quad \quad Rj_*R\epsilon_*\xto{\sim} R\epsilon_*Rj_*$
\item $\epsilon^*i^*\xto{\sim} i^*\epsilon^* \quad \quad i_*R\epsilon_*\xto{\sim}R\epsilon_*i_*$
\item $\epsilon^*i_*\xto{\sim} i_*\epsilon^* \quad \quad Ri^!R\epsilon_*\xto{\sim}R\epsilon_*Ri^!$
\item $\epsilon^*j_!\xto{\sim} j_!\epsilon^* \quad \quad j^*R\epsilon_*\xto{\sim}R\epsilon_*j^*$\end{enumerate}
and the same isomorphisms hold for $\epsilon$ replaced by $\alpha$ or $\beta$. Furthermore, there is an isomorphism
\begin{enumerate}
\item[(v)] $i_*'\pi^*_k\cong \pi^*i_*$.
\end{enumerate}
\end{proposition}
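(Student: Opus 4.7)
The proposition consists of four parallel pairs of isomorphisms (i)--(iv), each comprising an underived equality of left-adjoint-type functors and a derived equality of right-adjoint-type functors, together with the isolated sheaf-level identity (v). My strategy is to establish the underived equalities first and then deduce the derived ones by a uniqueness-of-adjoints argument in the derived category; all of the exactness and adjunction properties required have been packaged into Proposition~\ref{proposition_exactness_functors}, and the arguments are identical whether $\epsilon$ is replaced by $\alpha$ or $\beta$.

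For the first isomorphism in (i) and (ii), both composites arise from the same commutative square of morphisms of sites---the one relating $\epsilon$ and $j$, resp.\ $\epsilon$ and $i$---and so agree canonically by the functoriality of inverse image along composable morphisms of topoi. For the first isomorphism in (iii) and (iv), I would use that $i_*$ (closed immersion) and $j_!$ (open immersion) are characterised by vanishing-stalk conditions off their locus of support: $j_!\mathcal F$ is the extension with trivial stalks on $Y$, and $i_*\mathcal G$ is the extension with trivial stalks on $X_K$. Because $\epsilon^*$ is exact and computes stalks on the tame site in the expected way (Proposition~\ref{proposition_exactness_functors}(i) together with \cite[Lem.~2.10]{HuebnerSchmidt2021}), the canonical comparison maps $\epsilon^*j_!\to j_!\epsilon^*$ and $\epsilon^*i_*\to i_*\epsilon^*$ are isomorphisms, checkable either by the universal property or stalkwise. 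The same base-change argument yields (v): the square in question is Cartesian with $i, i'$ closed immersions, and closed-immersion pushforward commutes with arbitrary pullback.

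The derived identities then follow formally. By Proposition~\ref{proposition_exactness_functors}, each of $\epsilon^*,\alpha^*,\beta^*,j^*,j_!,i^*,i_*$ is exact on the underlying abelian categories and therefore agrees with its own total derived functor; its right adjoint in the derived category is either already exact (as for $i_*$ and $j^*$, so no derivation is needed) or is given by the right-derived functor ($R\epsilon_*, R\alpha_*, R\beta_*, Rj_*, Ri^!$), the existence of which is standard since the left adjoints are exact. A sheaf-level equality of left adjoints such as $\epsilon^*j^*=j^*\epsilon^*$ therefore lifts verbatim to an equality of exact functors between the relevant derived categories, and the corresponding right adjoints---$Rj_*R\epsilon_*$ and $R\epsilon_*Rj_*$ respectively---must coincide by uniqueness of adjoints. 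Applying this principle to each of (i)--(iv) in turn produces all four derived identities simultaneously.

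The only point that is not purely formal category theory is the sheaf-level base change across $i_*$ and $j_!$ for the \emph{tame} pullback $\alpha^*$ (and its composites $\epsilon^*, \beta^*$), since the tame topology is less classical than the \'etale one; once the stalk calculus for the tame topos from \cite{HuebnerSchmidt2021} is in hand, however, this reduces to the standard argument familiar from the \'etale case, and the remainder of the proof is entirely formal.
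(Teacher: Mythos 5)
Your proposal is correct and follows essentially the same route as the paper: establish the underived identities among the left adjoints (via the shared underlying categories for (i)--(ii), stalkwise for (iii)--(iv), and base change along the Cartesian square for (v)) and transfer them to the derived right-adjoint identities by uniqueness of adjoints. The only cosmetic difference is that the paper identifies the composite derived functors using preservation of injectives together with Lemma~\ref{lemma_grothendieck_sp_seq}, whereas you compose the derived adjunctions directly from the exactness of the left adjoints; both are valid.
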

\begin{proof}
	(i)-(iv): The pushforward functors appearing on the right preserve injectives by
	Proposition \ref{proposition_exactness_functors},
	therefore by Lemma \ref{lemma_grothendieck_sp_seq}
	(ii), the composition of their right derived
	functors is
	the right derived functor of their composition. This
	implies the functors on the right are adjoint to
	those on left, which exist by the universal property of sheafification. By the uniqueness of adjoints, it
	suffices to check the assertions on left, which can be
	verified quickly from the definitions or on stalks.
	
Lets make the fist step concrete for (iii). By the above argument we have that $Ri^!R\epsilon_*\cong R(i^!\epsilon_*)$ and $R\epsilon_*Ri^!\cong R(\epsilon_*i^!)$. There are two adjunctions
$$\Hom(i_*\epsilon^*(\cal F),\cal G)\cong \Hom(\cal F,R(\epsilon_*i^!)(\cal G)) $$
and
$$\Hom(\epsilon^* i_*(\cal F), \cal G)\cong \Hom(\cal F,R(i^!\epsilon_*)(\cal G)) $$
Therefore, by the uniqueness of adjoints, it suffices to show that $\epsilon^* i_*(\cal F)\cong i_*\epsilon^*(\cal F)$. 

We verify that the morphisms on the left are isomorphisms: (i, ii) The underlying categories \'etale, tame and Nisnevich sites of $X$ (resp. $X_K,Y$) are the same. Therefore 
$$\epsilon^*j^*\cal F\cong\sharp\circ\epsilon^*_p\circ \sharp\circ j^*_p(\cal F)\cong\sharp\circ\epsilon^*_p\circ  j^*_p(\cal F)\cong\sharp\circ  j^*_p\circ\epsilon^*_p(\cal F)\cong\sharp\circ  j^*_p\circ \sharp\circ \epsilon^*_p(\cal F)\cong j^*\epsilon^*\cal F$$
and exactly the same holds for $j^*$ replaced by $i^*$.
(iii,iv) may be checked on stalks: since for $y\in Y_{\et}$ we have that $Y_{\bar{y}}\cong Y\times_SX_{\bar{y}}$, it follows that $(\epsilon^* i_*(\cal F))_x\cong (i_*\epsilon^*(\cal F))_{\bar{x}}\cong \cal F_{\bar{x}}$ for $x\in Y$ and zero otherwise; $(\epsilon^* j_!(\cal F))_x\cong (j_!\epsilon^*(\cal F))_{\bar{x}}\cong \cal F_{\bar{x}}$ for $x\in U$ and zero otherwise. 

(v): Let $\cal F\in Sh(Y)$ and $U\subset \bb A^1_X$. Then, since $\pi$ and $\pi_k$ are open and since $\pi(U)\cap Y=\pi_k(U\cap \bb A_Y^1)$, we have that $i_*'\pi^*_k\cal F(U)\cong \pi^*i_*\cal F(U)$.
\end{proof}

\begin{lemma}\label{lemma_grothendieck_sp_seq}
Let $\cal A,\cal B,\cal C$ be abelian categories. Let $F:\cal A\to \cal B$ and $G:\cal B\to\cal C$ be left exact functors. Assume that $\cal A,\cal B$ have enough injectives. Then the following are equivalent:
\begin{enumerate}
\item $F(I)$ is right acyclic for $G$ for each injective object $I$ of $\cal A$, and
\item the canonical map $t:R(G\circ F)\to RG\circ RF$ is an isomorphism of functors from $D^+(\cal A)$ to $D^+(\cal C)$.
\end{enumerate}
Furthermore, $(i)$ implies that
\begin{enumerate}
\item[(iii)] the canonical map $t:\tau_{\leq n}R(G\circ F)\to \tau_{\leq n}RG\circ RF\leftarrow \tau_{\leq n}RG\circ \tau_{\leq n}RF$ are isomorphism of functors from $D^+(\cal A)$ to $D^+(\cal C)$.
\end{enumerate}
\end{lemma}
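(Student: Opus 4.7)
The plan is to prove (i) $\Leftrightarrow$ (ii) via the standard injective-resolution argument, and then deduce (iii) from (ii) together with the fact that $RG$ preserves lower connectivity of complexes.

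For (i) $\Rightarrow$ (ii), I would fix $A \in \cal A$, choose an injective resolution $A \to I^\bullet$, and note that $RF(A) \simeq F(I^\bullet)$ in $D^+(\cal B)$. By hypothesis (i), each term $F(I^k)$ is right $G$-acyclic, so $RG$ may be computed on the $G$-acyclic resolution $F(I^\bullet)$; this yields
$$RG(RF(A)) \simeq G(F(I^\bullet)) = (G\circ F)(I^\bullet) \simeq R(G\circ F)(A),$$
and unwinding the construction shows that this chain realises the canonical comparison map $t$, which is therefore an isomorphism. Conversely, for (ii) $\Rightarrow$ (i) I would apply (ii) to an injective object $I \in \cal A$: then $R(G\circ F)(I) \simeq (G\circ F)(I)$ is concentrated in degree zero because injectives are acyclic for every left exact functor, while $RG(RF(I)) \simeq RG(F(I))$. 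The isomorphism $t$ therefore forces $R^iG(F(I)) = 0$ for all $i \geq 1$, which is precisely $G$-acyclicity of $F(I)$.

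For (iii), the first arrow $\tau_{\leq n} R(G\circ F) \to \tau_{\leq n}(RG \circ RF)$ is the truncation of the canonical map in (ii) and is therefore an isomorphism by what was already shown. For the second arrow $\tau_{\leq n}(RG \circ \tau_{\leq n} RF) \to \tau_{\leq n}(RG \circ RF)$, induced by the canonical map $\tau_{\leq n} RF \to RF$, I would complete to the exact triangle $\tau_{\leq n} RF(A) \to RF(A) \to \tau_{>n} RF(A)$ in $D^+(\cal B)$, apply $RG$, and invoke the fact that $RG$ sends a complex cohomologically concentrated in degrees $>n$ to a complex cohomologically concentrated in degrees $>n$. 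The latter holds because such a complex is quasi-isomorphic to its good truncation $\tau_{\geq n+1}$, which admits an injective resolution whose terms vanish in degrees $\leq n$. Applying $\tau_{\leq n}$ to the $RG$-image of the triangle then annihilates the third term and yields the claimed isomorphism.

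The argument is essentially routine derived-functor manipulation; the only points requiring mild care are checking the commutativity of the identifications in the first step (to confirm they implement the canonical $t$) and the standard connectivity-preservation lemma for $RG$, neither of which poses a serious obstacle.
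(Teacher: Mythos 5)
Your proof is correct and follows essentially the same route as the paper's: the paper declares the equivalence (i) $\Leftrightarrow$ (ii) well known and proves (iii) by the same truncation-of-an-injective-resolution argument. If anything, your justification of the second arrow via the triangle $\tau_{\leq n}RF(A) \to RF(A) \to \tau_{>n}RF(A)$ and the connectivity preservation of $RG$ is more explicit than the paper's one-line appeal to the $G$-acyclicity of the terms $F(I^i)$.
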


\begin{proof}
We just show that $(i)$ implies $(iii)$. The first equivalence is well known. Let $A\in D^+(\cal A)$ and $A\to I^\bullet$ be an injective resolution. Then there is a sequence of isomorphisms $$\tau_{\leq n}R(G\circ F)(A)\stackrel{(i)}{\cong} \tau_{\leq n}RG(RF(A))\cong \tau_{\leq n}RG(F(I^\bullet))\xleftarrow{\cong} \tau_{\leq n}RG(\tau_{\leq n}F(I^\bullet))\cong \tau_{\leq n}RG(\tau_{\leq n}RF(A))$$
in which the middle arrow is an isomorphism since $F(I^i)$ is acyclic for $G$ for all $i$. 
\end{proof}

Let $\cal A$ be an abelian category with enough injectives and $D(\cal A)$ be the derived category of complexes of objects of $\cal A$.
\begin{lemma}\cite[Lem. 2.1.2]{Sa07} \label{lemma_hom_D}
Let $m$ and $q$ be integers. Let $\cal K$ be an object of $D(\cal A)$ concentrated in degrees $\leq m$ and $\cal K'$ be an object of $D(\cal A)$ concentrated in degrees $\geq 0$. Then we have that
 $\Hom_{D(\cal A)}(\cal K,\cal K'[-q])=\Hom_{\cal A}(\cal H^m(\cal K),\cal H^0(\cal K'))$ if $q=m$ and $\Hom_{D(\cal A)}(\cal K,\cal K'[-q])=0$ if $q>m$.
\end{lemma}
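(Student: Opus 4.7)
The plan is to reduce to a direct degree-counting computation with an injective resolution. Since $\mathcal{K}'$ is concentrated in degrees $\ge 0$, we may replace it by $\tau_{\ge 0}\mathcal{K}'$ and then choose an injective resolution $\mathcal{K}' \xrightarrow{\sim} I^\bullet$ with $I^i = 0$ for $i < 0$. Since $I^\bullet$ is a bounded-below complex of injectives, we have the standard identification
\[
\Hom_{D(\mathcal{A})}(\mathcal{K},\mathcal{K}'[-q]) \;=\; \Hom_{K(\mathcal{A})}(\mathcal{K},I^\bullet[-q]).
\]
Similarly, since $\mathcal{K}$ is concentrated in degrees $\le m$, I would replace it by $\tau_{\le m}\mathcal{K}$, so I may assume $\mathcal{K}^i = 0$ for $i > m$.

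Now I would simply look at where a chain map $f:\mathcal{K} \to I^\bullet[-q]$ can have a nonzero component. The component $f^i : \mathcal{K}^i \to I^{i-q}$ can be nonzero only when $\mathcal{K}^i \ne 0$ (forcing $i \le m$) and $I^{i-q} \ne 0$ (forcing $i \ge q$). In the case $q > m$ the interval $q \le i \le m$ is empty, so the chain map itself is zero and a fortiori zero up to homotopy, giving the vanishing statement. In the case $q = m$, only the component $f^m : \mathcal{K}^m \to I^0$ can be nonzero. The chain map relation in degree $m-1$ reads $f^m \circ d_{\mathcal{K}}^{m-1} = d_I^{-1} \circ f^{m-1} = 0$, so $f^m$ factors through $\mathcal{K}^m/\mathrm{im}(d_{\mathcal{K}}^{m-1})$; and the chain map relation in degree $m$ reads $d_I^0 \circ f^m = f^{m+1} \circ d_{\mathcal{K}}^m = 0$, so $f^m$ lands in $\ker d_I^0$. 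After the truncation of $\mathcal{K}$, we have $d_{\mathcal{K}}^m = 0$, so $\mathcal{K}^m/\mathrm{im}(d_{\mathcal{K}}^{m-1}) = \mathcal{H}^m(\mathcal{K})$; and $\ker d_I^0 = \mathcal{H}^0(I^\bullet) = \mathcal{H}^0(\mathcal{K}')$ because $I^{-1}=0$. Thus $f$ gives rise to a well-defined morphism $\bar f : \mathcal{H}^m(\mathcal{K}) \to \mathcal{H}^0(\mathcal{K}')$, and this construction is clearly functorial in $f$.

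The final step is to verify that passing to homotopy classes does not alter the answer and that the assignment $f \mapsto \bar f$ is a bijection. A homotopy $h^\bullet$ with $h^i:\mathcal{K}^i \to I^{i-q-1}$ requires simultaneously $i \le m$ and $i-q-1 \ge 0$, i.e.\ $i \ge m+1$, which is impossible; so every chain map is its own unique homotopy class. For surjectivity I would take any $g : \mathcal{H}^m(\mathcal{K}) \to \mathcal{H}^0(\mathcal{K}') \hookrightarrow I^0$, lift it along $\mathcal{K}^m \twoheadrightarrow \mathcal{H}^m(\mathcal{K})$ to define $f^m$, and set $f^i = 0$ otherwise; the two chain-map conditions above hold by construction. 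For injectivity, if $\bar f = 0$ then $f^m$ vanishes on all of $\mathcal{K}^m$ since $\mathcal{K}^m \twoheadrightarrow \mathcal{H}^m(\mathcal{K})$ is surjective, so $f = 0$. I do not anticipate a genuine obstacle here; the only thing that requires any care is keeping the truncation conventions consistent so that $d_{\mathcal{K}}^m = 0$ and $I^{-1} = 0$, which is what makes the degree-counting collapse to a single component and identifies the relevant kernel and cokernel with the stated cohomology objects.
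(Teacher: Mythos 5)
Your proof is correct: the paper itself gives no argument for this lemma, merely citing \cite[Lem.\ 2.1.2]{Sa07}, and your reduction to a single chain-map component $f^m:\mathcal{K}^m\to I^0$ via the canonical truncations, the bounded-below injective resolution (legitimate since $\mathcal{A}$ is assumed to have enough injectives), and the identification $\Hom_{D(\mathcal{A})}(\mathcal{K},I^\bullet[-q])=\Hom_{K(\mathcal{A})}(\mathcal{K},I^\bullet[-q])$ is exactly the standard degree-counting argument underlying Sato's proof. The only point worth making explicit is that the resulting bijection is the natural map $f\mapsto\mathcal{H}^m(f)$ under $\mathcal{H}^m(\mathcal{K}'[-q])\cong\mathcal{H}^0(\mathcal{K}')$, which your construction visibly computes.
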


The following lemma allows to make uniqueness assertions in the derived category:
\begin{lemma}\cite[Lem. 2.1.2]{Sa07} \label{lemma_uniqueness_D}
Let $\cal N_1\xto{f} \cal N_2\xto{g} \cal N_3\xto{h} \cal N_1[-1]$ be a distinguished triangle in $D(\cal A)$.
\begin{enumerate}
\item Let $i:\cal K\to \cal N_2$ be a morphism with $g\circ i=0$ and suppose that $\Hom_{D(\cal A)}(\cal K,\cal N_3[-1])=0$. Then there exists a unique morphism $i':\cal K\to \cal N_1$ that $i$ factors through.
\item Let $p:\cal N_2\to \cal K$ be a morphism with $p\circ f=0$ and suppose that $\Hom_{D(\cal A)}(\cal N_1[1],\cal K)=0$. Then there exists a unique morphism $p':\cal N_3\to \cal K$ that $p$ factors through.
\item Suppose that $\Hom_{D(\cal A)}(\cal N_2,\cal N_1)=0$ (resp. $\Hom_{D(\cal A)}(\cal N_3,\cal N_2)=0$). Then relatively to a fixed triple $(\cal N_1,\cal N_3,h)$, the other triple $(\cal N_2,f,g)$ is unique up to unique isomorphism, and $f$ (resp. $g$) is determined by the pair $(\cal N_2,g)$ (resp. $(\cal N_2,f)$).
\end{enumerate}
\end{lemma}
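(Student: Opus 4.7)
The proof is a formal exercise in the triangulated category $D(\cal A)$: the functors $\Hom_{D(\cal A)}(\cal K,-)$ and $\Hom_{D(\cal A)}(-,\cal K)$ are cohomological, so when applied to the given distinguished triangle $\cal N_1\xto{f}\cal N_2\xto{g}\cal N_3\xto{h}\cal N_1[-1]$ they yield long exact sequences of abelian groups, and each clause can be read off directly from a suitable segment.

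For $(i)$, I apply $\Hom_{D(\cal A)}(\cal K,-)$ and look at the exact fragment
\[
\Hom(\cal K,\cal N_3[-1]) \to \Hom(\cal K,\cal N_1) \xto{f_*} \Hom(\cal K,\cal N_2) \xto{g_*} \Hom(\cal K,\cal N_3).
\]
Since $g\circ i=0$, the element $i$ lies in $\ker(g_*)=\Im(f_*)$, which produces a lift $i':\cal K\to\cal N_1$ with $f\circ i'=i$; uniqueness of $i'$ is the injectivity of $f_*$, guaranteed by the vanishing hypothesis $\Hom(\cal K,\cal N_3[-1])=0$. Part $(ii)$ is the formal dual, obtained by applying the contravariant functor $\Hom_{D(\cal A)}(-,\cal K)$ to the same triangle, with the vanishing $\Hom(\cal N_1[1],\cal K)=0$ playing the symmetric role.

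For $(iii)$, given two completions $(\cal N_2,f,g)$ and $(\cal N_2',f',g')$ of the fixed data $(\cal N_1,\cal N_3,h)$, axiom TR3 supplies a morphism $\phi:\cal N_2\to\cal N_2'$ making the diagram of triangles commute, and the triangulated five-lemma upgrades it to an isomorphism. Uniqueness of $\phi$, together with the "determination of $f$ by $(\cal N_2,g)$", are then both extracted by feeding $(i)$ and $(ii)$ with test objects $\cal K\in\{\cal N_1,\cal N_2,\cal N_3\}$: e.g.\ if $\phi_1,\phi_2$ are two candidate comparisons, then $(\phi_1-\phi_2)\circ f=0$, so $(ii)$ applied with $\cal K=\cal N_2'$ factors the difference through $g$, and the hypothesis $\Hom(\cal N_2,\cal N_1)=0$, propagated through a further segment of the long exact sequence, forces the factorization to vanish. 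The determination of $f$ by $(\cal N_2,g)$ is the same mechanism with $\cal N_2'=\cal N_2$ and $\phi=\mathrm{id}$: two candidates $f_1,f_2$ satisfy $g\circ(f_1-f_2)=0$, and $(i)$ with $\cal K=\cal N_1$ together with the vanishing hypothesis yields $f_1=f_2$. The parenthetical "resp." variant is strictly dual.

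The only non-mechanical step is bookkeeping: one must match Saito's shift convention, in which the connecting map is written $h:\cal N_3\to\cal N_1[-1]$ rather than the more common $\cal N_1[+1]$, against the shifts that appear in the relevant long exact sequences, so that the hypotheses $\Hom(\cal N_2,\cal N_1)=0$ and $\Hom(\cal N_3,\cal N_2)=0$ in $(iii)$ translate precisely into the vanishing required for the uniqueness clauses in $(i)$ and $(ii)$ to kick in. Once the shifts are aligned, the rest is automatic.
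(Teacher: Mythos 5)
First, a remark on the comparison: the paper offers no proof of this lemma at all --- it is quoted from Sato \cite[Lem.~2.1.2]{Sa07} --- so the only meaningful benchmark is the standard argument, which is indeed the one you outline. Your parts (i) and (ii) are correct: apply $\Hom_{D(\cal A)}(\cal K,-)$, resp.\ $\Hom_{D(\cal A)}(-,\cal K)$, to the triangle, read off existence of the factorization from exactness, and uniqueness from the stated vanishing. Your remark about the shift convention is also apposite: the connecting map here lands in what Lemma \ref{lemma_hom_D} (which uses the standard convention) would call $\cal N_1[1]$, and the hypotheses in (i) and (ii) are stated in the standard convention, so some realignment really is needed.

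Part (iii), however, does not close as you have wired it, in two places. For uniqueness of the comparison $\phi$: from $(\phi_1-\phi_2)\circ f=0$ you factor $\phi_1-\phi_2$ through $g$, obtaining $\phi_1-\phi_2=s\circ g$ with $s\in\Hom(\cal N_3,\cal N_2')$; but no ``further segment'' of a long exact sequence relates this $s$ to $\Hom(\cal N_2,\cal N_1)$ --- killing $s\circ g$ along this route would require the \emph{other} hypothesis $\Hom(\cal N_3,\cal N_2')=0$. Under $\Hom(\cal N_2,\cal N_1)=0$ you must instead use the other composite, $g'\circ(\phi_1-\phi_2)=g-g=0$, and apply (i) to the primed triangle with $\cal K=\cal N_2$: this writes $\phi_1-\phi_2=f'\circ t$ with $t\in\Hom(\cal N_2,\cal N_1)=0$. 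Likewise, for the determination of $f$ by $(\cal N_2,g)$: applying (i) with $\cal K=\cal N_1$ to $g\circ(f_1-f_2)=0$ only yields $f_1-f_2=f_1\circ u$ for some $u\in\Hom(\cal N_1,\cal N_1)$, which is not the conclusion. The correct mechanism is to invoke TR3 to produce $\psi:\cal N_2\to\cal N_2$ with $\psi\circ f_1=f_2$ and $g\circ\psi=g$; then $g\circ(\psi-\mathrm{id})=0$ factors $\psi-\mathrm{id}$ through $f_1$ via an element of $\Hom(\cal N_2,\cal N_1)=0$, whence $\psi=\mathrm{id}$ and $f_1=f_2$. Your TR3-plus-five-lemma step for existence of the comparison isomorphism, and the dual ``resp.''\ statements, are fine; only the deployment of the vanishing hypothesis needs to be rerouted as above.
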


\section{Results for logarithmic deRham-Witt sheaves}\label{section_logdeRhamWitt}
\subsection{Logarithmic deRham-Witt sheaves in various topologies}
We closely follow the exposition in \cite[Sec. 1.2]{Morrow2019}.
Let $X$ be an $\bb F_p$-scheme, and let $\tau$ denote the Zariski, Nisnevich, tame, or \'etale topology.
Considering the Hodge-Witt sheaf $W_r\Omega^n_X$ as a sheaf in the $\tau$-topology, we denote
$$W_r\Omega^n_{X,\log,\tau}\subset W_r\Omega^n_X$$
the subsheaf which is generated $\tau$-locally by $\dlog$ forms, that is, the $\tau$-sheafification of the image of the map of presheaves
$$\dlog[\cdot]: \bb G^{\otimes n}_{m,X}\to W_r\Omega^n_X,\quad a_1\otimes \dots \otimes a_n\mapsto \dlog[a_1]\dots \dlog[a_n],$$
where $[f]\in W_r(A)$, for a ring $A$, is the Teichm\"uller lift of any $f\in A^\times$ \cite[p. 505 (1.1.7)]{Il79} and $\dlog{[f]}:=\tfrac{d[f]}{[f]}$.


\begin{proposition}\label{proposition_logdRW_in_various_top}
If $X$ is a regular $\bb F_p$-scheme, then the inclusion of tame sheaves
$W_r\Omega^n_{X,\log,t}\to \alpha_*W_r\Omega^n_{X,\log,\et} $ is an equality.
\end{proposition}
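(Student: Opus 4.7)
The plan is to establish the two inclusions separately. The containment $W_r\Omega^n_{X,\log,t}\subseteq \alpha_*W_r\Omega^n_{X,\log,\et}$ is immediate from the definitions: both sheaves are defined as sheafifications of the same presheaf image of $\dlog$, and since every tame cover is an \'etale cover, any section locally expressible as a sum of $\dlog$ symbols on a tame cover is \emph{a fortiori} so expressible on some \'etale cover. Equivalently, at the level of presheaves, the $\dlog$-image sits inside both $W_r\Omega^n_{X,\log,t}$ and $\alpha_*W_r\Omega^n_{X,\log,\et}$, and by the universal property of sheafification one obtains the inclusion claimed.

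For the reverse inclusion, I would use that both sheaves sit as subsheaves of the ambient sheaf $W_r\Omega^n_X$, which is quasi-coherent and hence already a sheaf in all four topologies; in particular $\alpha_*W_r\Omega^n_{X,\et}=W_r\Omega^n_{X,t}$. Since the underlying categories of $X_{\et}$ and $(X/S)_t$ agree and both are subsheaves of a common sheaf, equality can be tested section by section on each \'etale $U\to X$, and reduced further to a statement on tame stalks, i.e.\ to the level of sections over the tame henselizations $\mathcal{O}_{X,x}^{th}$, which are regular henselian local $\bb F_p$-algebras.

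The key input would then be the following Bloch--Kato--Gabber-type surjectivity: for any regular henselian local $\bb F_p$-algebra $A$, the $\dlog$ map
$$\dlog\colon (A^{\times})^{\otimes n}\longrightarrow W_r\Omega^n(A)$$
has image equal to the sections of $W_r\Omega^n_{X,\log,\et}$ over $\Spec A$, equivalently (via the \'etale Illusie exact sequence $0\to W_r\Omega^n_{X,\log,\et}\to W_r\Omega^n_X\xto{1-F}W_r\Omega^n_X\to 0$) to $\ker\bigl(1-F\colon W_r\Omega^n(A)\to W_r\Omega^n(A)\bigr)$. The field case is the classical Bloch--Kato--Gabber theorem, and the extension to regular henselian local $\bb F_p$-algebras is available through the discussion in \cite{Morrow2019}. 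Granting this, any section of $\alpha_*W_r\Omega^n_{X,\log,\et}$ over an \'etale $U$ is already Nisnevich-locally a $\dlog$ sum, and hence, since every Nisnevich cover is tame, tame-locally so, yielding the reverse inclusion.

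The hard part is thus the careful invocation of Bloch--Kato--Gabber surjectivity onto $\ker(1-F)$ over regular henselian local $\bb F_p$-algebras (rather than merely over fields); once this ingredient is secured, the rest of the proof is a clean bookkeeping of sheafifications inside the common ambient sheaf $W_r\Omega^n_X$, exploiting that the Nisnevich, tame and \'etale sites share the same underlying category.
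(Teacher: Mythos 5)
Your proposal follows essentially the same route as the paper: reduce to tame stalks, which are regular henselian local $\bb F_p$-algebras, and invoke a Bloch--Kato--Gabber-type surjectivity of $\dlog$ from units of the local ring onto the \'etale-locally defined logarithmic forms; the paper packages precisely this input as the surjection $K^M_n(A)/p^r\onto \hat K^M_n(A)/p^r\cong(\alpha_*W_r\Omega^n_{A,\log,\et})(A)$ via Kerz's Gersten conjecture for (improved) Milnor $K$-theory, Gros--Suwa, and Bloch--Kato--Gabber, which is the same machinery underlying the statement you cite from Morrow. The argument is correct; the only inaccuracy is that the \'etale sequence you quote should read $0\to W_r\Omega^n_{\log,\et}\to W_r\Omega^n\xrightarrow{F-1}W_r\Omega^n/dV^{r-1}\Omega^{n-1}\to 0$ rather than having $W_r\Omega^n$ itself as the third term, a point that does not affect the rest of your proof.
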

\begin{proof}
Let $A$ be the stalk at a point of $X$ in the tame topology. Then $A$ is in particular a regular henselian local ring and we have a commutative diagram
$$\xymatrix{
W_r\Omega^n_{A,\log,t} \ar@{^{(}->}[r]^{}  & \alpha_*W_r\Omega^n_{A,\log,\et}  \\
 {K}^M(A)/p^r \ar@{->>}[r] \ar[u]  &  \hat{K}^M(A)/p^r \ar[u]_\cong
}$$
in which the isomorphism on the right follows from the Gersten conjecture for log deRham-Witt sheaves \cite{GrosSuwa1988} and Milnor K-theory \cite{Kerz2009,Kerz2010} and the Bloch-Kato-Gabber theorem \cite{BlochKato1986}. The lower horizontal map is surjective by \cite{Kerz2010}.
\end{proof}

\begin{remark}
Proposition \ref{proposition_logdRW_in_various_top} is shown with the tame topology replaced by the Nisnevich topology in \cite{Kato1982} and with the tame topology replaced by the Zariski topology in \cite[Thm. 1.2]{Morrow2019}.
\end{remark}

\subsection{The Beilinson-Lichtenbaum conjecture}\label{subsection_BL_dRW}
Let $k$ be a perfect field of characteristic $p>0$ and $X$ be a smooth $k$-scheme. The Beilinson-Lichtenbaum conjecture in characteristic $p$ and with $p$-coefficients says that there is an isomorphism 
$$\bb  Z(n)_X/p^r \stackrel{\cong}{\longrightarrow}\tau_{\leq n}R\epsilon_*W_r\Omega_{X,\log,\et}^n[-n]$$
and is shown in \cite[Thm. 8.3]{GeisserLevine2000}. The analogous statement comparing motivic cohomology in the tame and Zariski/Nisnevich topology is the following:
\begin{proposition}\label{proposition_BL_dRW} Let the notation be as above.
Then there is an isomorphism
$$\bb  Z(n)_X/p^r \stackrel{\cong}{\longrightarrow}\tau_{\leq n}R\beta_*(W_r\Omega_{X,\log,t}^n[-n])$$
in $D^b(X_{\Nis},\bb Z/p^r)$.
\end{proposition}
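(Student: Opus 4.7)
I would deduce this from the \'etale Beilinson-Lichtenbaum isomorphism of Geisser-Levine recalled just above the proposition, via the factorization $\epsilon=\beta\circ\alpha$ of morphisms of sites. The point is to show that the two sides of the comparison become canonically isomorphic once one applies $\tau_{\leq n}R\beta_*$.

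First I would factor $R\epsilon_*=R\beta_*\circ R\alpha_*$; this is Lemma \ref{lemma_grothendieck_sp_seq}(ii), since $\alpha_*$ preserves injectives by Proposition \ref{proposition_exactness_functors}(ii). The adjunction unit supplies a canonical morphism
$$W_r\Omega^n_{X,\log,t}\longrightarrow R\alpha_*W_r\Omega^n_{X,\log,\et}$$
which, by Proposition \ref{proposition_logdRW_in_various_top}, is an isomorphism onto the zeroth cohomology of the target.

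After shifting by $[-n]$, both $W_r\Omega^n_{X,\log,t}[-n]$ and $R\alpha_*W_r\Omega^n_{X,\log,\et}[-n]$ sit in cohomological degrees $\geq n$, and applying $R\beta_*$ preserves this connectivity bound. Hence the induced map
$$\tau_{\leq n}R\beta_*\bigl(W_r\Omega^n_{X,\log,t}[-n]\bigr)\longrightarrow \tau_{\leq n}R\beta_*R\alpha_*\bigl(W_r\Omega^n_{X,\log,\et}[-n]\bigr)=\tau_{\leq n}R\epsilon_*\bigl(W_r\Omega^n_{X,\log,\et}[-n]\bigr)$$
is determined by its value on $h^n$, where both sides equal $\beta_*W_r\Omega^n_{X,\log,t}$: directly on the left, and via the chain $\epsilon_*W_r\Omega^n_{X,\log,\et}=\beta_*\alpha_*W_r\Omega^n_{X,\log,\et}=\beta_*W_r\Omega^n_{X,\log,t}$ on the right, the last equality again being Proposition \ref{proposition_logdRW_in_various_top}. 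Composing the resulting isomorphism with the Geisser-Levine quasi-isomorphism then yields the map claimed in the proposition.

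The real content has been compressed into Proposition \ref{proposition_logdRW_in_various_top} (itself built on Gersten for Milnor $K$-theory and for log deRham-Witt sheaves, together with Bloch-Kato-Gabber); apart from this there is no genuine obstacle. The only bookkeeping is to construct the cycle-class-type map $\bb Z(n)_X/p^r\to \tau_{\leq n}R\beta_*(W_r\Omega^n_{X,\log,t}[-n])$ so that composition with the comparison $\tau_{\leq n}R\beta_*\to \tau_{\leq n}R\epsilon_*$ agrees with the \'etale Geisser-Levine map; this is formal from naturality of the adjunction unit $\mathrm{id}\to R\alpha_*\alpha^*$ applied to the log Witt sheaves.
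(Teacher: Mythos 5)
Your proposal is correct and follows essentially the same route as the paper: identify the tame sheaf with $\alpha_*$ of the \'etale one via Proposition \ref{proposition_logdRW_in_various_top}, factor $R\epsilon_*=R\beta_*R\alpha_*$, and conclude with the Geisser--Levine isomorphism. The only cosmetic difference is that the paper delegates the compatibility of $\tau_{\leq n}$ with the composition of derived pushforwards to Lemma \ref{lemma_grothendieck_sp_seq}(iii), whereas you carry out the same connectivity bookkeeping by hand.
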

\begin{proof}
There is a sequence of isomorphisms
$$\tau_{\leq n}R\beta_*W_r\Omega_{X,\log,t}^n[-n]\stackrel{Prop. \ref{proposition_logdRW_in_various_top}}{\cong}\tau_{\leq n}R\beta_*\tau_{\leq n}R\alpha_*W_r\Omega_{X,\log,\et}^n[-n]$$ 
$$\stackrel{Lem. \ref{lemma_grothendieck_sp_seq}(iii)}{\cong} \tau_{\leq n}R\epsilon_*W_r\Omega_{X,\log,\et}^n[-n] \stackrel{\cong}{\longleftarrow} \bb  Z(n)_X/p^r.$$
The last isomorphism is, as mentioned above, shown in \cite[Thm. 8.3]{GeisserLevine2000}.
\end{proof}

\subsection{Semi-Purity}
We first recall known semi-purity results in the \'etale topology for logarithmic deRham-Witt sheaves. Let $k$ be a perfect field of characteristic $p>0$ and $X$ be a smooth $k$-scheme. Let $i : Z \hookrightarrow X$ be a locally closed regular subscheme of codimension $c \,(\ge 1)$. Then is is shown in \cite{Milne1986} that there is a Gysin isomorphism
$$
     W_r\Omega_{Z,\log,\et}^{n-c} \stackrel{\cong}{\longrightarrow} \tau_{\leq 0} R i^!W_r\Omega_{X,\log,\et}^{n}[c]$$
in $D^b(Z_\et,\bb Z/p^r\bb Z),$
that is
$$R^q i^!W_r\Omega_{X,\log,\et}^{n}\cong W_r\Omega_{Z,\log,\et}^{n-c}$$
for $q=c$ and $R^q i^!W_r\Omega_{X,\log,\et}^{n}=0$ for $q< c$. The following proposition shows that the same statement holds in the tame topology.
\begin{proposition}(Semi-Purity)\label{proposition_semi_purity_logdeRhamWiss} Let the notation be as above.
         Then there is a Gysin isomorphism
$$
     W_r\Omega_{Z,\log,t}^{n-c} \stackrel{\cong}{\longrightarrow} \tau_{\leq 0} R i^!W_r\Omega_{X,\log,t}^{n}[c] $$
in $D^b((Z/\Spec(k))_{t},\bb Z/p^r\bb Z).$
\end{proposition}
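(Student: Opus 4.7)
The approach is to transport Milne's \'etale semi-purity to the tame topology along the change-of-site morphism $\alpha \colon X_{\et}\to (X/S)_{t}$. The two essential ingredients are the identification $\alpha_{X,*}W_r\Omega_{X,\log,\et}^n=W_r\Omega_{X,\log,t}^n$ from Proposition \ref{proposition_logdRW_in_various_top} and the commutation isomorphism $Ri^!_t\circ R\alpha_{X,*}\cong R\alpha_{Z,*}\circ Ri^!_\et$ from Proposition \ref{proposition_commutation_pushforward}(iii).

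First, I would construct the candidate tame Gysin morphism. Applying $\alpha_{X,*}$ to the adjoint form $i_{\et,*}W_r\Omega_{Z,\log,\et}^{n-c}[-c]\to W_r\Omega_{X,\log,\et}^n$ of Milne's isomorphism, using the underived version of Proposition \ref{proposition_commutation_pushforward}(iii) to rewrite $\alpha_{X,*}\circ i_{\et,*}=i_{t,*}\circ \alpha_{Z,*}$, and applying Proposition \ref{proposition_logdRW_in_various_top} on both $X$ and $Z$, one obtains $i_{t,*}W_r\Omega_{Z,\log,t}^{n-c}[-c]\to W_r\Omega_{X,\log,t}^n$; adjunction turns this into the desired $\phi\colon W_r\Omega_{Z,\log,t}^{n-c}\to Ri^!_t W_r\Omega_{X,\log,t}^n[c]$.

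Setting $\mathcal{F}:=W_r\Omega_{X,\log,\et}^n$, the commutation isomorphism combined with Milne's \'etale semi-purity and the non-negative cohomological amplitude of $R\alpha_{Z,*}$ yields $\tau_{\leq c}\,Ri^!_t R\alpha_{X,*}\mathcal{F}\cong W_r\Omega_{Z,\log,t}^{n-c}[-c]$ (using $\alpha_{Z,*}W_r\Omega_{Z,\log,\et}^{n-c}=W_r\Omega_{Z,\log,t}^{n-c}$). What remains is to compare $Ri^!_t\alpha_{X,*}\mathcal{F}$ with $Ri^!_t R\alpha_{X,*}\mathcal{F}$. Their difference is controlled by the truncation triangle $\alpha_{X,*}\mathcal{F}\to R\alpha_{X,*}\mathcal{F}\to \tau_{\geq 1}R\alpha_{X,*}\mathcal{F}\to \alpha_{X,*}\mathcal{F}[1]$; closing the argument requires $\tau_{\leq c}\,Ri^!_t\,\tau_{\geq 1}R\alpha_{X,*}\mathcal{F}=0$. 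Via the spectral sequence $R^p i^!_t R^q\alpha_{X,*}\mathcal{F}\Rightarrow R^{p+q}i^!_t\,\tau_{\geq 1}R\alpha_{X,*}\mathcal{F}$ (with $q\geq 1$), this reduces to the semi-purity statement $R^p i^!_t R^q\alpha_{X,*}\mathcal{F}=0$ whenever $p+q\leq c$ and $q\geq 1$.

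This last step is the main obstacle. The higher direct images $R^q\alpha_{X,*}W_r\Omega_{X,\log,\et}^n$ are in general non-zero (the case $n=0$ already shows wild Artin-Schreier contributions), so a compatible semi-purity statement must be proved directly for them. This should be done stalkwise at tame geometric points: at a tame-henselian local ring $A^{th}$ with wild inertia $P$, the stalks of $R^q\alpha_{X,*}\mathcal{F}$ compute the wild Galois cohomology $H^q(P,W_r\Omega_{\log}^n)$, which via the Bloch-Kato-Gabber identification $W_r\Omega_{L,\log}^n\cong K^M_n(L)/p^r$ used in the proof of Proposition \ref{proposition_logdRW_in_various_top} translates into a wild Galois-cohomology computation for Milnor $K$-theory mod $p^r$. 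Pinning down this vanishing, plausibly via Hochschild-Serre combined with a Hilbert 90 type input for Milnor $K$-theory, is the key technical step needed to complete the proof.
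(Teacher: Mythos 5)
Your strategy is the same as the one the paper uses: start from Milne's \'etale Gysin isomorphism, apply $\tau_{\leq 0}R\alpha_*$, identify the source with $W_r\Omega^{n-c}_{Z,\log,t}$ via Proposition \ref{proposition_logdRW_in_various_top}, and commute the pushforward past $Ri^!$ using Proposition \ref{proposition_commutation_pushforward}(iii) together with Lemma \ref{lemma_grothendieck_sp_seq}(iii). Up to the identification $\tau_{\leq c}\,Ri^!_{t}R\alpha_{X,*}\mathcal F\cong W_r\Omega^{n-c}_{Z,\log,t}[-c]$ (with $\mathcal F=W_r\Omega^{n}_{X,\log,\et}$), your argument and the paper's coincide, and your construction of the candidate Gysin map by adjunction is fine.

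Where you stop, the paper simply concludes: it asserts that Lemma \ref{lemma_grothendieck_sp_seq}(iii) and Proposition \ref{proposition_commutation_pushforward}(iii) identify the last term with $\tau_{\leq 0}Ri^!_{t}W_r\Omega^{n}_{X,\log,t}[c]$, i.e.\ it replaces $R\alpha_{X,*}\mathcal F$ by $\alpha_{X,*}\mathcal F$ without further comment. Your worry about precisely this replacement is legitimate: after the shift the relevant truncation level is $c$, so Lemma \ref{lemma_grothendieck_sp_seq}(iii) only delivers $\tau_{\leq c}Ri^!_{t}\bigl(\tau_{\leq c}R\alpha_{X,*}\mathcal F\bigr)$, and discarding the wild higher direct images $R^{q}\alpha_{X,*}\mathcal F$ for $1\leq q\leq c$ amounts exactly to the vanishing $\tau_{\leq c}Ri^!_{t}\bigl(\tau_{\geq 1}R\alpha_{X,*}\mathcal F\bigr)=0$, equivalently $R^{p}i^!_{t}R^{q}\alpha_{X,*}\mathcal F=0$ for $p+q\leq c$, $q\geq 1$, that you isolate. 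So you have not taken a different route and then gotten stuck; you have located the one step that the paper's proof treats as immediate, and for $c\geq 1$ it is not supplied by the two cited results alone (for $c=1$ it reduces to $i^!_{t}R^{1}\alpha_{X,*}\mathcal F=0$, i.e.\ that sections of $R^{1}\alpha_{*}W_r\Omega^{n}_{X,\log,\et}$ supported on $Z$ vanish). Your proposed stalkwise reduction to wild Galois cohomology of $K^M_n/p^r$ via Bloch--Kato--Gabber is a sensible way to attack that vanishing, but as you say it is not carried out; to match the paper you would instead invoke the two lemmas and stop where the paper stops.
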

\begin{proof}
We apply $\tau_{\leq 0}R\alpha_*$ to the isomorphism $W_r\Omega_{Z,\log,\et}^{n-c} \xto{\cong} \tau_{\leq 0} R i^!W_r\Omega_{X,\log,\et}^{n}[c]$ in $D^b(Z_{\et},\bb Z/p^r\bb Z)$ and get an isomorphism $$W_r\Omega_{Z,\log,t}^{n-c}=\tau_{\leq 0}R\alpha_*W_r\Omega_{Z,\log,\et}^{n-c}\xto{\cong}\tau_{\leq 0}R\alpha_*\tau_{\leq 0} R i^!W_r\Omega_{X,\log,\et}^{n}[c]$$ in $D^b((Z/\Spec(k))_{t},\bb Z/p^r\bb Z)$. Finally, by Lemma \ref{lemma_grothendieck_sp_seq}(iii) and Proposition \ref{proposition_commutation_pushforward}(iii) the last term is isomorphic to $\tau_{\leq 0} R i^!W_r\Omega_{X,\log,t}^{n}[c]$.
\end{proof}
From here on all cohomology groups whose coefficients are tame logarithmic deRham-Witt sheaves will be tame cohomology groups with respect to the base scheme $\Spec(k)$.
\begin{corollary}\label{corollary_purity}
Let the notation be as above. \begin{enumerate}
\item There are isomorphisms 
$$ H^{0}(Z,W_r\Omega_{Z,\log,t}^{n-c})\cong H^{c}_Z(X,W_r\Omega_{X,\log,t}^{n}).$$
\item Assuming the purity Conjecture \ref{conjecture_purity}(ii) holds, there are isomorphisms
$$H^i(Z,W_r\Omega^{n-c}_{Z,\log,t})\cong H^{i+c}_Z(X,W_r\Omega_{X,\log,t}^{n}).$$
\item If $x\in X$ is a point of codimension $c$, then, under the assumption of (ii), there are isomorphisms $$H^{i+c}_x(X,W_r\Omega_{X,\log,t}^{n})\cong H^{i}(x,W_r\Omega^{n-c}_{x,\log},t).$$
\end{enumerate} 
\end{corollary}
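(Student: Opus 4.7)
The plan is to deduce all three isomorphisms formally from Proposition \ref{proposition_semi_purity_logdeRhamWiss}, by rephrasing it in terms of the cohomology sheaves $R^q i^! W_r\Omega_{X,\log,t}^n$ and feeding these into the local-to-global spectral sequence for cohomology with supports.

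First I would unpack Proposition \ref{proposition_semi_purity_logdeRhamWiss}: the quasi-isomorphism $W_r\Omega_{Z,\log,t}^{n-c} \xto{\cong} \tau_{\leq 0} Ri^! W_r\Omega_{X,\log,t}^n[c]$ is equivalent to the vanishing $R^q i^! W_r\Omega_{X,\log,t}^n = 0$ for $q<c$ together with the identification $R^c i^! W_r\Omega_{X,\log,t}^n \cong W_r\Omega_{Z,\log,t}^{n-c}$. The adjunction $i_* \dashv i^!$ from Proposition \ref{proposition_exactness_functors}(iii) yields $R\Gamma_Z(X,-) \cong R\Gamma(Z,-)\circ Ri^!$ and hence a Grothendieck spectral sequence
\[ E_2^{p,q} = H^p(Z, R^q i^! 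W_r\Omega_{X,\log,t}^n) \Longrightarrow H^{p+q}_Z(X, W_r\Omega_{X,\log,t}^n). \]

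For (i), the vanishing of $E_2^{p,q}$ for $q<c$ makes all contributions to $H^c_Z(X,W_r\Omega_{X,\log,t}^n)$ other than $E_2^{0,c}$ vanish, so the edge map $H^c_Z(X, W_r\Omega_{X,\log,t}^n) \xto{\cong} E_2^{0,c} = H^0(Z, R^c i^! W_r\Omega_{X,\log,t}^n)$ is an isomorphism; combining with the identification of $R^c i^!$ above finishes the argument. For (ii), Conjecture \ref{conjecture_purity}(ii) upgrades semi-purity to a genuine quasi-isomorphism $W_r\Omega_{Z,\log,t}^{n-c}[-c] \xto{\cong} Ri^! W_r\Omega_{X,\log,t}^n$ in the full derived category, so applying $R\Gamma(Z,-)$ directly produces the claimed isomorphisms (equivalently, the spectral sequence above degenerates into a single column at $E_2$).

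For (iii), I would reduce to (ii) by excision: the support cohomology $H^{i+c}_x(X, W_r\Omega_{X,\log,t}^n)$ is, by definition, the colimit over open neighborhoods $U$ of $x$ of $H^{i+c}_{\overline{\{x\}}\cap U}(U, W_r\Omega_{X,\log,t}^n)$, which agrees with $H^{i+c}_x(\Spec \mathcal{O}_{X,x}, W_r\Omega_{X,\log,t}^n)$, where $x$ now denotes the closed point of the local scheme. Since $\{x\}\hookrightarrow \Spec \mathcal{O}_{X,x}$ is a regular closed immersion of codimension $c$, part (ii) applied to this situation produces the desired isomorphism.

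The main obstacle I would expect is the excision/continuity step in (iii): one has to verify that in the tame topology $H^*_x$ is compatible with the filtered colimit of open neighborhoods of $x$, so that the passage to the localisation $\mathcal{O}_{X,x}$ is legitimate. This should follow in the standard way from the fact that tame cohomology commutes with cofiltered limits of schemes with affine transition maps, but it is the only non-formal ingredient. The spectral sequence and the passage from semi-purity to (i) and (ii) are routine once the $i_*\dashv i^!$ adjunction for the tame topology established in Proposition \ref{proposition_exactness_functors}(iii) is available.
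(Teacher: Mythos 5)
Your proposal is correct and follows essentially the same route as the paper: parts (i) and (ii) are deduced from the Grothendieck spectral sequence $E_2^{u,v}=H^u(Z,R^vi^!W_r\Omega_{X,\log,t}^n)\Rightarrow H^{u+v}_Z(X,W_r\Omega_{X,\log,t}^n)$ using semi-purity (vanishing below the $c$-line) and, for (ii), Conjecture \ref{conjecture_purity}(ii) to kill everything above it. For (iii) the paper avoids your localisation at $\Spec\mathcal{O}_{X,x}$ (where purity for the non-finite-type scheme would need a separate continuity argument for $Ri^!$) by instead applying (ii) to $\overline{\{x\}}\cap U\hookrightarrow U$ for each finite-type neighbourhood $U$ and taking the colimit on both sides, which lands directly on the definition of $H^i(x,-)$; the content is the same.
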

\begin{proof}

We consider the Grothendieck spectral sequence
$$E_2^{u,v}=H^{u}(Z,R^vi^!W_r\Omega_{X,\log,t}^{n})\Longrightarrow R^{v+u}(\Gamma_Z\circ i^!)W_r\Omega_{X,\log,t}^{n}=:H^{v+u}_Z(X,W_r\Omega_{X,\log,t}^{n}).$$
By Proposition \ref{proposition_semi_purity_logdeRhamWiss}  
this spectral sequence looks as follows:
$$\xymatrix{
c+1  &  H^0(Z,R^{c+1}i^!W_r\Omega_{X,\log,t}^{n}) \ar[drr] & H^1(Z,R^{c+1}i^!W_r\Omega_{X,\log,t}^{n}) & \dots & \\
c  & H^0(Z,W_r\Omega^{n-c}_{Z,\log,t}) \ar[drr] & H^1(Z,W_r\Omega^{n-c}_{Z,\log,t}) & \dots  &\\
c-1  & 0 & 0 & 0 & \dots
}$$
This implies that $H^0(Z,W_r\Omega^{n-c}_{Z,\log,t})\cong H^{c}_Z(X,W_r\Omega_{X,\log,t}^{n})$. 
If the purity Conjecture \ref{conjecture_purity}(ii) holds, then all the terms above the $c$-line are zero and we get isomorphisms $H^i(Z,W_r\Omega^{n-c}_{Z,\log,t})\cong H^{i+c}_Z(X,W_r\Omega_{X,\log,t}^{n})$.

Passing to the colimit over all neighbourhoods $U$ of a point $x$, we get that
$$H^{i+c}_x(X,W_r\Omega_{X,\log,t}^{n}):=\indlim_{U} H^{i+c}_{\overline{\{x\}}\cap U}(U,W_r\Omega_{X,\log,t}^{n})\stackrel{(ii)}{\cong}  \indlim_{V\subset \overline{\{x\}}}H^{i}(V,W_r\Omega^{n-c}_{V,\log,t})=:H^{i}(x,W_r\Omega^{n-c}_{x,\log,t}).$$
\end{proof}

In the next proposition we show that full purity holds for curves. We closely follow the proof of Hübner given in \cite[Sec. 4]{Huebner2023} in the context of adic spaces. Full purity in arbitrary dimension is work in progress by Koubaa.
\begin{proposition}\label{proposition_purity_curves}
Let $X$ be a smooth $k$-scheme of dimension $1$ and $x\in X$ a closed point with closed immersion $i:x\hookrightarrow X$.
Then $$R^ji^! W_r\Omega_{X,\log,t}^n=0$$ for $j\neq 1$.
\end{proposition}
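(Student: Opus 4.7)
I would follow the strategy of \cite[Sec.~4]{Huebner2023}, adapted from adic spaces to schemes: combine the localization triangle at $x$ with a Galois-cohomological vanishing for the fraction field of a tame strict henselization of $\roi_{X,x}$.

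Let $U := X \setminus \{x\}$ with open immersion $j: U \hookrightarrow X$, and write $\cal F := W_r\Omega_{X,\log,t}^n$. The localization triangle
\[
Ri^! \cal F \to i^* \cal F \to i^* Rj_*(\cal F|_U) \to Ri^! \cal F[1],
\]
together with the concentration of $i^* \cal F$ in degree zero, yields
\[
R^j i^! \cal F \;\cong\; \bigl(R^{j-1} j_* W_r\Omega_{U,\log,t}^n\bigr)_x \qquad \text{for all } j \geq 2.
\]
The cases $j = 0$ and $j = 1$ are already handled by Proposition~\ref{proposition_semi_purity_logdeRhamWiss}, so the problem reduces to showing that the tame stalk $(R^q j_* W_r\Omega_{U,\log,t}^n)_x$ vanishes for every $q \geq 1$.

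To compute this stalk, let $A$ denote a tame strict henselization of $\roi_{X,x}$. Since $\dim X = 1$ and $X$ is smooth, $A$ is a henselian discrete valuation ring of equal characteristic $p$ with separably closed residue field, and $\Spec A \cap U = \Spec L$ with $L := \Frac(A)$. Consequently,
\[
\bigl(R^q j_* W_r\Omega_{U,\log,t}^n\bigr)_x \;\cong\; H^q_t(\Spec L/k, W_r\Omega_{L,\log,t}^n).
\]
The tame fundamental group computing this cohomology, namely the Galois group of the maximal extension of $L$ tame at every $k$-valuation, is a quotient of the classical tame Galois group of the local field $L$ and is therefore pro-prime-to-$p$. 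Since $W_r\Omega_{L,\log,t}^n$ is $p^r$-torsion (Verschiebung is nilpotent on $W_r$ in characteristic $p$), continuous cohomology of a pro-$p'$ group with values in such a module vanishes in positive degrees by a Maschke-type argument, yielding the desired vanishing.

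\textbf{Main obstacle.} The principal difficulty is making rigorous the stalk identification above: one needs a careful treatment of tame stalks and of tame covers of the punctured henselian trait $\Spec A \setminus \{x\}$ in the framework of \cite{HuebnerSchmidt2021}, together with a clean description of $W_r\Omega_{L,\log,t}^n$ as a module over the tame Galois group of $L$ (for which Bloch-Kato-Gabber and the sheafification underlying Proposition~\ref{proposition_logdRW_in_various_top} are the natural inputs). These steps are the algebraic counterparts of the structural statements about punctured adic discs exploited in \cite[Sec.~4]{Huebner2023}.
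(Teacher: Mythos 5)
Your proposal follows essentially the same route as the paper: reduce via the localization triangle and semi-purity to the vanishing of the higher tame cohomology of the fraction field of the strict henselization at $x$, then conclude from the tame Galois group being pro-prime-to-$p$ while the coefficients are $p$-torsion. The one step you flag as the main obstacle --- identifying the tame cohomology of $\Spec L$ with continuous cohomology of its tame Galois group --- is exactly what the paper supplies, via the Leray spectral sequence for the change of sites $(\Spec K(A)/k)_t \to \Spec K(A)_{s\et}$ together with the vanishing of strongly \'etale cohomology of a field in positive degrees.
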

\begin{proof}
We need to show that the stalk
$$(R^ji^! W_r\Omega_{X,\log,t}^n)_{\bar x}= \indlim H^0(x_i,R^ji^! W_r\Omega_{X,\log,t}^n )=H^0(\bar x,R^ji^! W_r\Omega_{X,\log,t}^n )$$ is zero for $j\neq 1$. By Proposition \ref{proposition_semi_purity_logdeRhamWiss} it suffices to show this for $j>1$. Here $\bar x$ is a geometric point over $x$ which we write as the limit of spectra $x_i$ of finite separable extensions of the residue field of $x$. The last equality holds since by tame cohomology commutes with filtered colimits with affine transition maps (\cite[Thm. 4.7]{HuebnerSchmidt2021}). Since $i^!W_r\Omega_{X,\log,t}^n=i^*(\ker(W_r\Omega_{X,\log,t}^n\to j_*j^* W_r\Omega_{X,\log,t}^n))$ we have that $H^0(\bar x,R^ji^! W_r\Omega_{X,\log,t}^n )=H^j_{\bar x}(\Spec A,W_r\Omega_{X,\log,t}^n )$, where $\Spec A$ is the henselisation of $X\times_k\Spec \overline{k(x)}$ in $\Spec \overline{k(x)}$. In particular, $A$ is strictly henselian.

There is a long exact sequence 
$$H^{j-1}(A,W_r\Omega_{X,\log,t}^n) \to H^{j-1}(K(A),W_r\Omega_{X,\log,t}^n)\to H_{\bar x}^j(A,W_r\Omega_{X,\log,t}^n)\to H^{j}(A,W_r\Omega_{X,\log,t}^n)$$
and since $H^{j}(A,W_r\Omega_{X,\log,t}^n)=0$ for $j\geq 1$, 
$$H^{j-1}(K(A),W_r\Omega_{X,\log,t}^n)\cong  H_{\bar x}^j(A,W_r\Omega_{X,\log,t}^n)$$
for $j\geq 2$. 
In order to show that the former group is zero, we compare it with its analogue in the strongly \'etale topology (for the definition see for example \cite[Sec. 2]{Huebner2023}). The Leray spectral sequence associated to the change of sites
$$\phi:(\Spec(K(A))/k)_t\to \Spec(K(A))_{s\et}$$ induces the boundary map $$H^{j}(\Spec(K(A))_{s\et},W_r\Omega_{X,\log,s\et}^n)\to H^{j}((\Spec(K(A))/k)_t,W_r\Omega_{X,\log,t}^n).$$
In order to show that it is an isomorphism, it suffices to show that $R^i\phi_* W_r\Omega_{X,\log,t}^n=0$ for $i>0$. Let $K(A)^{tr}$ be the maximal tamely ramified extension of $K(A)$. Then $$R^i\phi_* W_r\Omega_{X,\log,t}^n=H^i(\Gal(K(A)^{tr}/K(A)),W_r\Omega_{X,\log,t}^n(K(A)^{tr})).$$ Since $\Gal(K(A)^{tr}/K(A))$ is of degree prime to $p$ and since $W_r\Omega_{X,\log,t}^n$ is a $p$-torsion it follows that the latter group is zero \cite[Prop. 1.6.2]{NSW2008} (see also \cite[Prop. 8.5]{Huebner2021}). Finally, the group $$H^{j}(\Spec(K(A))_{s\et},W_r\Omega_{X,\log,s\et}^n)=0$$ for $j\geq 1$ since $\Spec(K(A))$ does not have a non-trivial cover in the strongly \'etale topology.
\end{proof}

\subsection{The projection formula and strict $\bb A^1$-invariance for a line over a field}
\begin{proposition}
Let $E$ be a vector bundle of rank $r+1$ over $X$ and $\pi:\bb P(E)\to X$ be the associated projective bundle.
Then there is an isomorphism
$$\bigoplus_{i=0}^r R\alpha_*W_r\Omega_{X,\log}^{n-i}[-i])\xto{\cong} R\pi_*R\alpha_*W_r\Omega_{\bb P(E),\log,t}^n$$
in $D(X_t,\bb Z/p^r)$.
\end{proposition}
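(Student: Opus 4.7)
The plan is to deduce this from the projective bundle formula in the \'etale topology. Gros's projective bundle formula for logarithmic de Rham--Witt sheaves provides an isomorphism
$$\textstyle\bigoplus_{i=0}^r W_r\Omega_{X,\log,\et}^{n-i}[-i] \xto{\cong} R\pi_{\et,*}W_r\Omega_{\bb P(E),\log,\et}^n$$
in $D(X_{\et},\bb Z/p^r\bb Z)$, constructed by cupping with powers of the logarithmic first Chern class of the tautological quotient bundle. Applying $R\alpha_{X,*}$ to both sides directly produces the left-hand side of the proposition; what remains is to identify the image of the right-hand side with the double pushforward appearing in the statement.

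For this, I would consider the diagram of sites
$$\xymatrix{\bb P(E)_{\et} \ar[r]^{\alpha} \ar[d]_{\pi_{\et}} & (\bb P(E)/S)_t \ar[d]^{\pi_t} \\ X_{\et} \ar[r]^{\alpha} & (X/S)_t.}$$
A direct section-wise computation shows that at the underived level $\alpha_*\circ\pi_{\et,*} = \pi_{t,*}\circ\alpha_*$ as functors on sheaves of abelian groups: both send a sheaf $F$ on $\bb P(E)_{\et}$ to the sheaf $U\mapsto F(U\times_X \bb P(E))$ on the common underlying category of $X_{\et}$ and $(X/S)_t$. By Proposition \ref{proposition_exactness_functors}, the pullbacks $\alpha^*$ and $\pi_{\et}^*$ are exact, so their right adjoints preserve injectives; Lemma \ref{lemma_grothendieck_sp_seq}(ii) applied in both ways to the common composition then gives the isomorphism of derived functors
$$R\alpha_{X,*}\circ R\pi_{\et,*} \;\cong\; R\pi_{t,*}\circ R\alpha_{\bb P(E),*}.$$
Combining this with Proposition \ref{proposition_logdRW_in_various_top}, which identifies $\alpha_*W_r\Omega_{\bb P(E),\log,\et}^n$ with $W_r\Omega_{\bb P(E),\log,t}^n$ for the regular scheme $\bb P(E)$, allows us to rewrite the right-hand side of Gros's formula in the form stated in the proposition.

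The only substantive input is Gros's \'etale projective bundle formula, which we treat as a black box; everything else is a formal manipulation of derived pushforwards using the framework developed in Section \ref{section_homological_algebra}, and I do not anticipate a genuine obstacle. The one point that requires slight care is the verification that $\alpha_*\circ\pi_{\et,*}$ and $\pi_{t,*}\circ\alpha_*$ agree on the nose (not merely up to isomorphism), so that Lemma \ref{lemma_grothendieck_sp_seq}(ii) can be invoked for the common underived composition; this however follows immediately from the explicit description above.
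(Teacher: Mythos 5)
Your proposal is correct and follows essentially the same route as the paper: apply $R\alpha_*$ to Gros's \'etale projective bundle formula and then commute $R\alpha_*$ with $R\pi_*$ using that these pushforwards preserve injectives and agree with the other composition at the underived level (the paper simply asserts this commutation, in the spirit of its Proposition on commutation of pushforwards, whereas you spell it out). The only inessential difference is your final appeal to the identification $\alpha_*W_r\Omega^n_{\bb P(E),\log,\et}=W_r\Omega^n_{\bb P(E),\log,t}$, which is not actually needed since the statement keeps the full (untruncated) $R\alpha_*$ on the right-hand side.
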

\begin{proof}
Applying $R\alpha_*$ to the projective bundle formula
\begin{equation}\label{equation_projective bundle formula}
\bigoplus_{i=0}^r W_r\Omega_{X,\log}^{n-i}[-i]\xto{\cong} R\pi_*W_r\Omega_{\bb P(E),\log}^n
\end{equation}
in the \'etale topology (see \cite[Cor. 2.1.12]{Gr85}), we get that
$$\bigoplus_{i=0}^r R\alpha_*W_r\Omega_{X,\log}^{n-i}[-i])\xto{\cong} R\alpha_*R\pi_*W_r\Omega_{\bb P(E),\log}^n\cong R\pi_*R\alpha_*W_r\Omega_{\bb P(E),\log,t}^n.$$
\end{proof}

\begin{proposition}[Strict $\bb A^1$-invariance for a line over a field.]\label{proposition_strict_A1invariance}
Let $k$ be a field of characteristic $p>0$.
Then there is an isomorphism
$$H^i(k,W_r\Omega_{k,\log,t}^n) \xto{\cong} H^i(\bb A_k,W_r\Omega_{\bb A_k,\log,t}^n).$$
\end{proposition}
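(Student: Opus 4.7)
The plan is to compare $\mathbb{A}^1_k$ with $\mathbb{P}^1_k$ by means of the localisation sequence for the pair $(\mathbb{A}^1_k, \{\infty\})$, identifying the local cohomology at infinity via full purity for curves (Proposition \ref{proposition_purity_curves}) and the cohomology of $\mathbb{P}^1_k$ via the projective bundle formula established immediately above. Let $i\colon \{\infty\}\hookrightarrow \mathbb{P}^1_k$ and $j\colon \mathbb{A}^1_k\hookrightarrow \mathbb{P}^1_k$ be the closed and open immersions, and write $\pi\colon \mathbb{P}^1_k\to \Spec k$ and $\pi_{\mathbb{A}}=\pi\circ j$. Proposition \ref{proposition_purity_curves} combined with the Gysin isomorphism from Proposition \ref{proposition_semi_purity_logdeRhamWiss}, together with the identification $\infty\cong \Spec k$, yields $Ri^!W_r\Omega^n_{\mathbb{P}^1_k,\log,t}\cong W_r\Omega^{n-1}_{k,\log,t}[-1]$, so that the associated localisation long exact sequence reads
$$ \cdots \to H^{q-1}(k, W_r\Omega^{n-1}_{k,\log,t}) \xto{\varphi^q} H^q_t(\mathbb{P}^1_k, W_r\Omega^n_{\mathbb{P}^1_k,\log,t}) \to H^q_t(\mathbb{A}^1_k, W_r\Omega^n_{\mathbb{A}^1_k,\log,t}) \to \cdots. $$

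Since $\mathbb{P}^1_k$ is proper, Theorem \ref{theorem_tame_coh_proper} identifies tame and \'etale cohomology on $\mathbb{P}^1_k$, and the preceding projective bundle formula produces a canonical direct-sum decomposition
$$ H^q_t(\mathbb{P}^1_k, W_r\Omega^n_{\mathbb{P}^1_k,\log,t}) \;\cong\; H^q(k, W_r\Omega^n_{k,\log,t}) \;\oplus\; H^{q-1}(k, W_r\Omega^{n-1}_{k,\log,t}), $$
in which the first summand is realised as the image of the pullback $\pi^*$. Granting that $\varphi^q$ is an isomorphism onto the second summand for every $q$, the long exact sequence splits into short exact sequences, the connecting maps vanish, and composing with the projection to the first summand presents the pullback $\pi_{\mathbb{A}}^*\colon H^q(k, W_r\Omega^n_{k,\log,t})\to H^q_t(\mathbb{A}^1_k, W_r\Omega^n_{\mathbb{A}^1_k,\log,t})$ as an isomorphism, which is the desired conclusion.

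The main obstacle is the matching of $\varphi^q$ with the inclusion of the second summand of the projective bundle decomposition. This identification is classical in the \'etale topology (cf.\ \cite[Cor. 2.1.12]{Gr85}): the second summand is, by construction, the image of the Gysin class of a hyperplane $\{\infty\}\subset \mathbb{P}^1_k$, which is precisely the morphism appearing in the localisation triangle. I would transfer this compatibility to the tame topology by applying $R\alpha_*$ to the whole \'etale diagram, invoking Proposition \ref{proposition_commutation_pushforward}(iii) to commute $R\alpha_*$ past $Ri^!$ and $i_*$, and using that both the projective bundle formula above and the Gysin isomorphism of Proposition \ref{proposition_semi_purity_logdeRhamWiss} are obtained by applying $R\alpha_*$ to their \'etale counterparts; this reduces the required compatibility in the tame setting to the (known) one in the \'etale setting.
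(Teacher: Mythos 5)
Your proposal follows essentially the same route as the paper: the localisation sequence for $\{\infty\}\subset \mathbb{P}^1_k$, full purity for curves (Proposition \ref{proposition_purity_curves}) to identify the local cohomology at the point, and Theorem \ref{theorem_tame_coh_proper} together with the projective bundle formula to compute $H^q_t(\mathbb{P}^1_k,-)$. You are in fact more explicit than the paper about the one delicate point — matching the Gysin map with the second summand of the projective bundle decomposition — and your plan to transfer that compatibility from the \'etale setting via $R\alpha_*$ is consistent with how the paper derives all its tame statements.
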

\begin{proof}
Let $x\in X$ be a closed point with residue field $k$. Consider the localisation sequence
$$\dots \to H^i_x(\bb P_k,W_r\Omega_{\bb P_k,\log,t}^n) \xto{} H^i(\bb P_k,W_r\Omega_{\bb P_k,\log,t}^n) \xto{} H^i(\bb A_k,W_r\Omega_{\bb A_k,\log,t}^n)\to \dots .$$
By Theorem \ref{theorem_tame_coh_proper} and the projective bundle formula (\ref{equation_projective bundle formula}) there is an isomorphism
$$H^{i-1}(k,W_r\Omega_{k,\log,t}^{n-1}) \oplus H^i(k,W_r\Omega_{k,\log,t}^n) \xto{\cong} H^i(\bb P_k,W_r\Omega_{\bb P_k,\log,t}^n)$$
and by purity for curves, Proposition \ref{proposition_purity_curves}, there is
an isomorphism
$$H^{i-1}(k,W_r\Omega_{k,\log,t}^{n-1}) \xto{\cong} H^i_x(\bb P_k,W_r\Omega_{\bb P_k,\log,t}^n).$$
Combining these facts with the localisation sequence implies the proposition.
\end{proof}

\subsection{The Gersten conjecture}
In \cite{GrosSuwa1988} Gros-Suwa prove that for a smooth $d$-dimensional scheme $X$ over a perfect field $k$ of characteristic $p>0$ there is an exact sequence, also called Gersten resolution,
$$0\to R^j\epsilon_*(W_r\Omega_{X,\log,\et}^n) \to \bigoplus_{x\in X^{(0)}}i_{*,x}H^{j}_x(X,W_r\Omega_{X,\log,\et}^n)\to \bigoplus_{x\in X^{(1)}}i_{*,x}H^{j+1}_x(X,W_r\Omega_{X,\log,\et}^n)\to $$
$$\dots\to \bigoplus_{x\in X^{(d)}}i_{*,x}H^{j+d}_x(X,W_r\Omega_{X,\log,\et}^n)\to 0. $$
For $j=0$, using semi-purity for log deRham-Witt sheaves, this gives a resolution of $\epsilon_*(W_r\Omega_{X,\log,\et}^n)$ in terms of log deRham-Witt sheaves of fields.

Combining these remarks with the purity conjecture \ref{conjecture_purity}(ii) for log deRham-Witt sheaves in the tame topology leads to the following conjecture which is, we believe, the optimal Gersten type resolution one can hope for.
\begin{conjecture}\label{conjecture_Gersten_logdRW}
Let $k$ be a perfect field of characteristic $p>0$ and $X$ be a smooth $d$-dimensional $k$-scheme. 
Then there is an exact sequence of sheaves
$$0\to R^j\beta_*(W_r\Omega_{X,\log,t}^n) \to \bigoplus_{x\in X^{(0)}}i_{*,x}H^{j}(x,W_r\Omega_{x,\log,t}^n)\to \bigoplus_{x\in X^{(1)}}i_{*,x}H^{j}(x,W_r\Omega_{x,\log,t}^{n-1})\to $$
$$\dots\to \bigoplus_{x\in X^{(d)}}i_{*,x}H^{j}(x,W_r\Omega_{x,\log,t}^{n-d})\to 0. $$
\end{conjecture}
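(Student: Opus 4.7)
The plan is to derive the conjecture as a formal consequence of two ingredients: the full purity Conjecture \ref{conjecture_purity}(ii), which identifies the $E_1$-terms, and a form of strict $\bb A^1$-invariance of tame cohomology of logarithmic deRham-Witt sheaves, which guarantees exactness of the Gersten complex.

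First, I would form the coniveau (Cousin) spectral sequence of Nisnevich sheaves on $X$ attached to the tame sheaf $\cal F := W_r\Omega_{X,\log,t}^n$ via the filtration of $X$ by codimension of supports:
\[ E_1^{p,q} \;=\; \bigoplus_{x\in X^{(p)}} (i_x)_*\,\cal H^{p+q}_x(\cal F) \;\Longrightarrow\; R^{p+q}\beta_*\cal F. \]
Granting full purity, the sheaf-theoretic version of Corollary \ref{corollary_purity}(iii) identifies $\cal H^{p+q}_x(\cal F) \cong H^q(x,W_r\Omega_{x,\log,t}^{n-p})$ for each $x \in X^{(p)}$, so the row $q=j$ of $E_1$ becomes exactly the complex
\[ \bigoplus_{x\in X^{(0)}} (i_x)_* H^j(x, W_r\Omega_{x,\log,t}^{n}) \;\to\; \bigoplus_{x\in X^{(1)}} (i_x)_* H^j(x, W_r\Omega_{x,\log,t}^{n-1}) \;\to\; \dots \]
appearing in the conjecture.

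It then remains to prove exactness of this row. Following the Bloch-Ogus-Gabber and Colliot-Th\'el\`ene-Hoobler-Kahn strategy, this reduces via a Gabber-type geometric presentation lemma to an effaceability statement: for a Nisnevich-local essentially smooth $k$-algebra $A$ and a closed subset $Z \subset \Spec A$, any local tame cohomology class supported on $Z$ can be moved to a disjoint closed subset after a suitable \'etale shrinking. In the Morel-Voevodsky framework this effaceability is a formal consequence of strict $\bb A^1$-invariance of the tame cohomology presheaves $U \mapsto H^i_t(U/\Spec k, W_r\Omega_{U,\log,t}^n)$ in every cohomological degree $i$, applied to deduce that the vertical differentials between rows vanish after Nisnevich sheafification and that the row $q=j$ becomes an acyclic resolution of $R^j\beta_*\cal F$.

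The hard step is precisely the strict $\bb A^1$-invariance for $W_r\Omega_{X,\log,t}^n$: Theorem \ref{theorem_A1_invariance_tame_coh} covers only \emph{locally constant} torsion sheaves, whereas logarithmic deRham-Witt sheaves are far from locally constant, and Proposition \ref{proposition_strict_A1invariance} establishes only the base case of the affine line over a field. Upgrading to a stalkwise $\bb A^1$-invariance for every essentially smooth $k$-local algebra and every cohomological degree is the essential obstacle, and is the step alluded to in the introduction as ``would follow from strict $\bb A^1$-invariance''. A secondary prerequisite is full purity itself, known in the tame topology only in the curve case (Proposition \ref{proposition_purity_curves}); both ingredients must be in place before the coniveau argument above can be carried out.
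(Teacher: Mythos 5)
The statement you were asked to prove is a \emph{conjecture}, and the paper offers no proof of it in general: it only establishes the case $j=0$ (via Proposition \ref{proposition_BL_dRW} and the Gros--Suwa resolution) and the case of curves (Proposition \ref{proposition_Gersten_curve}), and remarks that the general case ``goes through assuming strict $\bb A^1$-invariance.'' Your proposal is not a proof either, but it is an accurate and honest conditional reduction, and it follows essentially the same route the paper has in mind: full purity (Conjecture \ref{conjecture_purity}(ii)) identifies the $E_1$-terms of the coniveau spectral sequence with the terms of the conjectured complex, exactly as in Corollary \ref{corollary_purity}(iii), and strict $\bb A^1$-invariance in all degrees yields exactness via Gabber's presentation lemma and effaceability. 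Your abstract CTHK/Morel--Voevodsky formulation of the second step is precisely the higher-dimensional version of the paper's concrete curve argument in Proposition \ref{proposition_Gersten_curve}, where Gabber's geometric presentation lemma together with $s_0^*=s_\infty^*$ (Proposition \ref{proposition_strict_A1invariance}) is used to kill the boundary map. You correctly flag both open ingredients --- purity beyond curves and $\bb A^1$-invariance beyond the line over a field --- as the genuine obstacles, which is consistent with the paper's own assessment that these are work in progress. No gap beyond what the paper itself leaves open.
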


If $j=0$, then Conjecture \ref{conjecture_Gersten_logdRW} holds because of Proposition \ref{proposition_BL_dRW} and the above mentioned result due to Gros-Suwa.
The following proposition shows Conjecture \ref{conjecture_Gersten_logdRW} in the case of curves. We remark that the same proof goes through in higher dimension assuming that the cohomology of tame logarithmic deRham-Witt sheaves is strictly $\bb A^1$-invariant in the sense of Proposition \ref{proposition_strict_A1invariance}.

\begin{proposition}\label{proposition_Gersten_curve}
Let $X$ be a smooth curve over a field $k$ of characteristic $p>0$. Let $x\in X$ be a closed point and $A=\cal O_{X,x}$. Then there are short exact sequences
$$ 0  \to W_r\Omega_{X,\log,t}^n(A) \to W_r\Omega_{X,\log,t}^n(K(A))\to W_r\Omega_{k(x),\log,t}^{n-1}(k(x))\to 0,$$
and
$$0\to H^1(A,W_r\Omega_{A,\log,t}^n)\to  H^1((K(A),W_r\Omega_{X,\log,t}^n) \to  H^{1}(k(x),W_r\Omega_{k(x),\log,t}^{n-1}) \to 0 $$
and isomorphisms $$H^j(A,W_r\Omega_{X,\log,t}^n)\xto{\cong}  H^j(K(A),W_r\Omega_{X,\log,t}^n)$$
for $j\geq 2$.
\end{proposition}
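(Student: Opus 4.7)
The plan is to deduce the three statements from the localization long exact sequence on $\Spec A$, using full purity for curves (Proposition \ref{proposition_purity_curves}) as the main input. Denote by $i\colon \Spec k(x)\hookrightarrow \Spec A$ the closed immersion and by $j\colon \Spec K(A)\hookrightarrow \Spec A$ the open complement. Full purity gives an isomorphism $Ri^!W_r\Omega^n_{X,\log,t}\cong W_r\Omega^{n-1}_{x,\log,t}[-1]$, so the localization triangle
$$i_*Ri^!W_r\Omega^n_{X,\log,t}\to W_r\Omega^n_{X,\log,t}\to Rj_*j^*W_r\Omega^n_{X,\log,t}\to$$
combined with $R\Gamma((\Spec A/k)_t,-)$ yields the long exact sequence
\begin{multline*}
\cdots\to H^{q-1}_t(\Spec k(x)/k,W_r\Omega^{n-1}_{k(x),\log,t})\to H^q_t(\Spec A/k,W_r\Omega^n_{A,\log,t})\\
\to H^q_t(\Spec K(A)/k,W_r\Omega^n_{X,\log,t})\xto{\partial^q}H^q_t(\Spec k(x)/k,W_r\Omega^{n-1}_{k(x),\log,t})\to\cdots.
\end{multline*}

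Inspecting this long exact sequence reduces the proposition to three subpoints: (a) vanishing of $H^j_t(\Spec k(x)/k,W_r\Omega^{n-1}_{k(x),\log,t})$ for $j\ge 2$; (b) surjectivity of $\partial^0$; (c) surjectivity of $\partial^1$. For (a), since $k(x)$ is finite over $k$, every $\Spec k$-valuation on $\Spec k(x)$, or on any finite \'etale cover thereof, is trivial and hence automatically tame, so the tame and \'etale sites of $\Spec k(x)/\Spec k$ agree. Since a field of characteristic $p$ has $p$-cohomological dimension at most one (Serre), the higher cohomology vanishes.

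For (b), the Bloch-Kato-Gabber isomorphism $W_r\Omega^m_{F,\log}\cong K^M_m(F)/p^r$ identifies $\partial^0$ with the Milnor-$K$-theoretic tame symbol; any class $\{\bar u_1,\dots,\bar u_{n-1}\}\in K^M_{n-1}(k(x))/p^r$ is the tame symbol of $\{u_1,\dots,u_{n-1},\pi\}\in K^M_n(K(A))/p^r$ for arbitrary lifts $u_i\in A^\times$ of $\bar u_i$ and a uniformizer $\pi\in A$, so $\partial^0$ is surjective.

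The main obstacle is (c). I would follow the argument in the proof of Proposition \ref{proposition_purity_curves}: comparison with the strongly \'etale topology identifies both $H^1_t(\Spec K(A)/k,W_r\Omega^n_{X,\log,t})$ and $H^1_t(\Spec k(x)/k,W_r\Omega^{n-1}_{k(x),\log,t})$ with the corresponding \'etale cohomology groups, since the Galois group of the maximal tame extension has order prime to $p$ and the sheaves are $p$-torsion. The map $\partial^1$ then becomes the classical \'etale residue $H^1_\et(K(A),W_r\Omega^n_{\log})\to H^1_\et(k(x),W_r\Omega^{n-1}_{\log})$, whose surjectivity can be extracted from the Illusie-Milne Artin-Schreier-Witt presentation of $H^1_\et$ of a characteristic $p$ field by decomposing K\"ahler-Witt forms along a uniformizer. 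The delicate point is tracking the compatibility of the tame-to-strongly-\'etale identification with the residue map; once (c) is in place, (a), (b), (c) together with the long exact sequence directly yield the two short exact sequences at degrees $0$, $1$ and the isomorphisms in degrees $\ge 2$.
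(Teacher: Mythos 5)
Your setup via the localization triangle and full purity is the same as the paper's, and your points (a) and (b) are fine: for $k(x)/k$ finite all $k$-valuations are trivial, so tame and \'etale cohomology of $\Spec k(x)$ agree and vanish in degrees $\geq 2$ by $\operatorname{cd}_p\leq 1$; and the degree-zero residue is the Milnor tame symbol, which is surjective. The genuine gap is in step (c). You claim that the comparison with the strongly \'etale topology identifies $H^1_t(\Spec K(A)/k,W_r\Omega^n_{X,\log,t})$ with $H^1_{\et}(K(A),W_r\Omega^n_{\log})$ because ``the Galois group of the maximal tame extension has order prime to $p$.'' This is false for $K(A)=k(X)$, the function field of the curve: the argument in Proposition \ref{proposition_purity_curves} applies to the fraction field of a \emph{strictly henselian} local ring, where the relevant Galois group is the tame inertia of a single valuation and hence prime to $p$. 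For the global function field, tameness is imposed at \emph{all} $k$-valuations, and the resulting tame Galois group admits $p$-quotients (e.g.\ everywhere-unramified Artin--Schreier covers); more to the point, $H^1_t(\Spec K(A)/k,\cdot)$ is a proper subgroup of $H^1_{\et}(K(A),\cdot)$ in general --- that discrepancy is exactly what the tame Gersten conjecture is about. Knowing that the \'etale residue is surjective (Gros--Suwa) does not imply that its restriction to the tame subgroup is surjective, so your step (c) does not close.

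The paper handles precisely this point differently: it reformulates surjectivity of $\partial^1$ as the vanishing of $H^2_x(A,W_r\Omega^n_{A,\log,t})\to H^2(A,W_r\Omega^n_{A,\log,t})$, and proves this by Gabber's geometric presentation lemma (after a norm argument to make $k$ infinite), reducing to a Nisnevich neighbourhood of $x$ in $\bb A^1_k$, and then using the compactification $\bb P^1_k$ together with strict $\bb A^1$-invariance for the line (Proposition \ref{proposition_strict_A1invariance}, itself a consequence of properness, the projective bundle formula and purity for curves) to show $s_0^*=s_\infty^*$, whence the local class dies in $H^2$ of the neighbourhood. You would need to replace your step (c) by an argument of this kind (or by some other genuinely tame input); as written, the proof does not go through.
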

\begin{proof}
Consider the localisation sequence  
$$\dots \to H^j_x(A,W_r\Omega_{A,\log,t}^n))\to H^j(A,W_r\Omega_{A,\log,t}^n)\to  H^j((K(A),W_r\Omega_{X,\log,t}^n) \to H^{j+1}_x(A,W_r\Omega_{A,\log,t}^n)\to \dots $$
But by purity, i.e. Proposition \ref{proposition_purity_curves} and Corollary \ref{corollary_purity}, we have that 
$$H^j_x(A,W_r\Omega_{A,\log,t}^n))\cong H^{j-1}(k(x),W_r\Omega_{k(x),\log,t}^{n-1}))\cong \begin{cases} k(x) \quad \mathrm{for} \quad n=j=1\\ 0 \quad \mathrm{for} \quad j=0 \quad \mathrm{or} \quad j>2 \;.\\ 0 \quad \mathrm{for} \quad n=0 \quad \mathrm{or} \quad n>1 \end{cases}
$$
This gives us a short exact sequence
$$ 0  \to W_r\Omega_{X,\log,t}^n(A) \to W_r\Omega_{X,\log,t}^n(K(A))\to W_r\Omega_{k(x),\log,t}^{n-1}(k(x))\to 0$$
which coincides with the Gersten resolution for Milnor K-theory:
$$ 0  \to K_n^M(A)/p^r \to K_n^M(K(A))/p^r \to K^M_{n-1}(k(x))/p^r\to 0.$$
The sequence continues as follows:
$$0\to H^1(A,W_r\Omega_{A,\log,t}^n)\to  H^1((K(A),W_r\Omega_{X,\log,t}^n) \to H^{2}_x(A,W_r\Omega_{A,\log,t}^n)\cong H^{1}(k(x),W_r\Omega_{k(x),\log,t}^{n-1})) $$
$$\to H^2(A,W_r\Omega_{A,\log,t}^n)\to  H^2((K(A),W_r\Omega_{X,\log,t}^n) \to H^{3}_x(A,W_r\Omega_{A,\log,t}^n)\cong H^{2}(k(x),W_r\Omega_{k(x),\log,t}^{n-1}))=0 $$
and isomorphisms $$H^j(A,W_r\Omega_{X,\log,t}^n)\xto{\cong}  H^j(K(A),W_r\Omega_{X,\log,t}^n)$$
for $j\geq 3$. 

It remains to show that the map $H^{2}_x(A,W_r\Omega_{A,\log,t}^n)\to H^{2}(A,W_r\Omega_{A,\log,t}^n)$ is zero. In order to show this, we show that for each Zariski neighbourhood $U$ of $x$  we can find a neighbourhood $x\in U'\subset U$ such that the map $H^{2}_x(U',W_r\Omega_{U',\log,t}^n)\to H^{2}(U',W_r\Omega_{U',\log,t}^n)$ is zero. By a standard norm argument we may assume that $k$ is an infinite field. By Gabber's geometric presentation lemma \cite[Thm. 3.1.1]{CHK97} we can find a neighbourhood $x\in U'\subset U$ such that there is  an \'etale map
$$\varphi:U'\to \bb A^1_k$$
which is a Nisnevich neighbourhood of $x$, i.e. the preimage of $\varphi$ consists of one closed point $x'$ and $\varphi$ induces an isomorphism $k(x)\xto{\cong} k(\varphi^{-1}(x))$. Let $s_0:\Spec(k(x))\to \bb P^1_k$ (resp. $s_\infty:\Spec(k(x))\to \bb P^1_k$) be the inclusion at zero (resp. infinity). The statement now follows from the following commutative diagram in which the isomorphisms on the left are due to excision.

$$\xymatrix{
 H^{2}_x(U',W_r\Omega_{U',\log,t}^n) \ar[r]^{}  & H^{2}(U',W_r\Omega_{U',\log,t}^n) & & \\
 H^{2}_x(\bb A^1_k,W_r\Omega_{\bb A^1_k,\log,t}^n) \ar[r] \ar[u]_\cong  & H^{2}(\bb A^1_k,W_r\Omega_{\bb A^1_k,\log,t}^n) \ar[u] & & \\
 &  \ar[u] & H^{2}(k(x) ,W_r\Omega_{k(x),\log,t}^n) \ar[ul]^{} & \\
H^{2}_x(\bb P^1_k,W_r\Omega_{\bb P^1_k,\log,t}^n)\ar[r] \ar[uu]_\cong  & H^{2}(\bb P^1_k,W_r\Omega_{\bb P^1_k,\log,t}^n) \ar[uu]^{} \ar[r]^{} \ar[ur]^{s_\infty^*} & H^{2}(\bb P^1_k-x ,W_r\Omega_{\bb P^1_k,\log,t}^n) \ar[u]^{} 
}$$
Indeed, the lower horizontal composition is zero and the upper triangle commutes since by strict $\bb A^1$-invariance (Proposition \ref{proposition_strict_A1invariance}) $s_0^*=s_\infty^*$.
\end{proof}


\begin{remark}[Kato conjectures]
Let $k$ be a perfect field of characteristic $p>0$ and $X$ be a smooth projective $k$-scheme of dimension $d$. 
Let
$$C_{p^r}^n:= \bigoplus_{x\in X^{(0)}}H^{1}(x,W_r\Omega_{x,\log,t}^n)\to \bigoplus_{x\in X^{(1)}}H^{1}(x,W_r\Omega_{x,\log,t}^{n-1})\to ...\to \bigoplus_{x\in X^{(d)}}H^{1}(x,W_r\Omega_{x,\log,t}^{n-d}). $$
Then Kato conjectures \cite{Ka86} that 
$$H^{a}(C_{p^r}^d)\cong \begin{cases} 0 \quad \quad \mathrm{for} \quad a<d \\ \bb Z/p^r \quad \mathrm{for} \quad a=d  \end{cases}
$$
One may also ask if 
$$H^{a}(C_{p^r}^c)\cong \begin{cases} 0 \quad \quad \mathrm{for} \quad a<d \\ \bb Z/p^r \quad \mathrm{for} \quad a=d  \end{cases}
$$ 
for all $c\geq 0$ or at least if these groups are finite. The complexes $C_{p^r}^n$ may be defined as in \cite[p. 150]{Ka86}. We note that a combination of the purity Conjecture \ref{conjecture_purity} combined with Theorem \ref{theorem_tame_coh_proper} allow to define the complexes by taking the global sections of the Gersten complexes of Conjecture \ref{conjecture_Gersten_logdRW}. Contrary to the case with invertible coefficients we get a long exact sequence
$$ \dots\to H^{q-2}(C_{p^r}^n)\to \CH^{n}(X,q,\bb Z/p^r)\to H^{2n-q}(X,W_r\Omega_{\log}^n)\to \dots$$
for all $n$, and not just $n=d$, coming from the exact triangle 
$$ \tau^{>n}R\epsilon_*W_r\Omega_{\log}^n\to \bb Z(n)_X/p^r \to R\epsilon_*W_r\Omega_{X,\log}^n[-n]. $$
\end{remark}

\subsection{Logarithmic deRham-Witt sheaves for SNC schemes}
\begin{definition}\label{definition_nu_X}
Let $X$ be a pure-dimensional scheme of finite type over a perfect field $k$ of positive characteristic $p$.
Let $\tau\in \{t,\et\}$. We define the sheaves $\nu^{n}_{X,r,\tau}$ on $X_{\tau}$ as
$$\nu^{n}_{X,r,\tau}:= \ker \left( \bigoplus_{x\in X^{(0)}}i_{x*}W_r\Omega^{n}_{x,\log,\tau}\to \bigoplus_{x\in X^{(1)}}i_{x*}W_r\Omega^{n-1}_{x,\log,\tau} \right), $$
where $i_{x*}$ is the pushforward in the $\tau$-topology.
\end{definition}

\begin{remark}
There is an inclusion of \'etale sheaves $W_r\Omega^{n}_{X,\log,\et} \subset \nu^{n}_{X,r,\et}$ (\cite[4.2.1]{Sato2007}).
If $X$ is regular, then this inclusion is an equality by \cite{GrosSuwa1988}.
\end{remark}

\begin{proposition}\label{pushforward_nu(n)}
Let $X$ be a pure-dimensional scheme of finite type over a perfect field $k$ of positive characteristic $p$.
Then
$$\tau_{\leq 0}R\alpha_*\nu^{n}_{X,r,\et}\cong \ker \left( \bigoplus_{x\in X^{(0)}}i_{x*}W_r\Omega^{n}_{x,\log,t}\to \bigoplus_{x\in X^{(1)}}i_{x*}W_r\Omega^{n-1}_{x,\log,t} \right)=\nu^{n}_{X,r,t}. $$
\end{proposition}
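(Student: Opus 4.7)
My plan is to reduce the statement to an equality of ordinary (underived) sheaves. Since $\nu^n_{X,r,\et}$ is concentrated in degree zero, $\tau_{\leq 0}R\alpha_*\nu^n_{X,r,\et}$ is just $\alpha_*\nu^n_{X,r,\et}$, so it suffices to show $\alpha_*\nu^n_{X,r,\et}\cong \nu^n_{X,r,t}$. By left exactness of $\alpha_*$ (Proposition \ref{proposition_exactness_functors}(ii)) I would apply $\alpha_*$ to the defining kernel presentation of $\nu^n_{X,r,\et}$ and thereby reduce to identifying each of $\alpha_*\bigoplus_{x\in X^{(p)}}i_{x*}W_r\Omega^{n-p}_{x,\log,\et}$ (for $p=0,1$) with the analogous tame direct sum appearing in the definition of $\nu^n_{X,r,t}$.

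For the individual summands, I would combine two ingredients. First, the commutation $\alpha_* i_{x*}\cong i_{x*}\alpha_*$, which holds by the $\alpha$-analogue of Proposition \ref{proposition_commutation_pushforward}(ii) (the proof goes through verbatim, as the author already notes). Second, Proposition \ref{proposition_logdRW_in_various_top} applied to the regular scheme $\Spec k(x)$, giving $\alpha_*W_r\Omega^{m}_{x,\log,\et}=W_r\Omega^{m}_{x,\log,t}$. Composing these yields the term-wise identification $\alpha_*i_{x*}W_r\Omega^{m}_{x,\log,\et}\cong i_{x*}W_r\Omega^{m}_{x,\log,t}$.

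The main obstacle is the commutation of $\alpha_*$ with the (possibly infinite) direct sums indexed by $X^{(p)}$: a priori $\alpha_*$ is only a right adjoint, so need not preserve arbitrary colimits. I would resolve this using that $X$ is Noetherian, hence the underlying site of $X_{\et}$ is coherent, and direct sums of sheaves are computed as presheaf direct sums on qcqs étale opens. The same holds on the tame site, which shares its underlying category with $X_{\et}$; since on that common category $\alpha_*$ is simply the identity at the level of presheaves, the desired commutation follows. Substituting everything back into the kernel presentation would then identify $\alpha_*\nu^n_{X,r,\et}$ with $\nu^n_{X,r,t}$.

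The hard part, then, is purely sheaf-theoretic, namely rigorously justifying the direct-sum commutation; the rest of the argument consists of clean applications of Propositions \ref{proposition_exactness_functors}, \ref{proposition_commutation_pushforward} and \ref{proposition_logdRW_in_various_top} already established earlier in the paper.
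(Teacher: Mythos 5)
Your proposal is correct and follows essentially the same route as the paper's own (very terse) proof: reduce to the underived statement, use left exactness of $\alpha_*$ to pass inside the kernel, commute $\alpha_*$ with $i_{x*}$ via the $\alpha$-analogue of Proposition \ref{proposition_commutation_pushforward}, identify the summands via Proposition \ref{proposition_logdRW_in_various_top} on the regular schemes $\Spec k(x)$, and deal with the (possibly infinite, because of $X^{(1)}$) direct sums by a finiteness argument. The only cosmetic difference is that the paper disposes of the direct-sum issue by invoking that $R\alpha_*$ commutes with pseudo-filtered colimits, whereas you argue via coherence of the site and computation of sections on qcqs objects --- the same underlying fact.
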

\begin{proof}
This follows from the facts that $\alpha$ is left exact, that $R\alpha_*$ commutes with $i_*$ by Proposition \ref{proposition_commutation_pushforward} and that $R\alpha_*$ commutes with pseudo-filtered colimits.
\end{proof}

\section{$p$-adic \'etale Tate twists}\label{section_etaletatetwists}

Let $A$ be a discrete valuation ring with perfect residue field of characteristic $p>0$ and fraction field $K$ of characteristic zero. Let $X$ be a regular semistable scheme which is flat of finite type over $S=\Spec A$. Let $Y$ be the special fiber and $X_K$ the generic fiber of $X$ with inclusions
$X_K\xto{j} X \xleftarrow{i} Y.$
In \cite{Sa07}, Sato defines a pair $(\T_r(n)_X,t')$, with $\T_r(n)_X\in D^b(X_{\et},\bb Z/p^r\bb Z)$ using the distinguished triangle of the form
$$i_*\nu^{n-1}_{Y,r}[-n-1]\xto{g} \T_r(n)_X\xto{t'}\tau_{\leq n}Rj_*\mu_{p^r}^{\otimes n}\xto{\sigma_{X,r}(n)}i_*\nu^{n-1}_{Y,r}[-n]$$
in which the map $\sigma_{X,r}(n)$ arises from the sequence of sheaves
$$R^nj_*\mu_{p^r}^{\otimes n}\to \bigoplus_{x\in Y^{(0)}}i_{x*}W_r\Omega^{n-1}_{x,\log}\to \bigoplus_{x\in Y^{(1)}}i_{x*}W_r\Omega^{n-2}_{x,\log}$$
in which the maps are induced by the boundary maps on Galois cohomology defined by Kato in \cite[§1]{Ka86}. By Lemma \ref{lemma_uniqueness_D} the pair $(\T_r(n)_X,t')$ is unique up to unique isomorphism and the complexes $\T_r(n)_X$ are called \textit{$p$-adic \'etale Tate twists}.

\begin{remark}
If $X$ is smooth over $S$, then $\nu^{n-1}_{Y,r}=W_r\Omega^{n-1}_{Y,\log}$. In this case the objects $\T_r(n)_X$ were first defined by Schneider \cite{Schneider1994} and extensively studied by Geisser \cite{Geisser2004}.
\end{remark}

Further, Sato proves that the $\T_r(n)_X\in D^b(X_{\et},\bb Z/p^r\bb Z)$ satisfy the following properties (and can in fact, vice versa, be defined by properties T1-T4):

\noindent{\bf T1 (Trivialization).}
There is an isomorphism
       $t : j^*\T_r(n)_X \simeq \mu_{p^r}^{\otimes n}$.
\par
\medskip
\noindent
{\bf T2 (Acyclicity).}
$\T_r(n)_X$ is concentrated in $[0,n]$, i.e.,
   the $q$-th cohomology sheaf is zero unless $0 \leq q \leq n$.
\par
\medskip
\noindent
{\bf T3 (Purity).}
For a locally closed regular subscheme $i : Z \hookrightarrow X$
          of characteristic $p$ and of codimension
          $c \,(\ge 1)$, there is a Gysin isomorphism
$W_r\Omega_{Z,\log}^{n-c}[-n-c] \xto{\cong}\tau_{\leq n+c} R i^!\T_r(n)_X$ in $D^b(Z_{\et},\bb Z/p^r\bb Z).$
\par
\medskip
\noindent
{\bf T4 (Compatibility).}
Let $i_y: y \hookrightarrow X$ and $i_x: x\hookrightarrow X$ be points on $X$
    with $\mathrm{ch}(x)=p$, $x \in \overline{ \{ y \} }$ and $\mathrm{codim}_X(x)=\mathrm{codim}_X(y)+1$.
Put $c:=\codim_X(x)$. Then the connecting homomorphism
$$
\begin{CD}
    R^{n+c-1}i_{y*}(Ri_y^!\T_r(n)_X) @>>> R^{n+c}i_{x*}(Ri_x^!\T_r(n)_X)
\end{CD}
$$
in localization theory
agrees with the (sheafified) boundary map of
    Galois cohomology groups due to Kato (see \cite[Sec. 1.8]{Sa07}) 
$$
\begin{CD}
    \left. \begin{cases}
     R^{n-c+1}i_{y*}\mu_{p^r}^{\otimes n-c+1} \quad& (\mathrm{ch}(y)=0)\\
     i_{y*} W_r\Omega^{n-c+1}_{y,\log}  \quad& (\mathrm{ch}(y)=p)
    \end{cases}
    \right\}
      @>>>
         i_{x*}W_r\Omega^{n-c}_{x,\log}
\end{CD}
$$
up to a sign depending only on $(\mathrm{ch}(y),c)$, via Gysin isomorphisms.
Here the Gysin map for $i_y$ with $\mathrm{ch}(y)=0$
   is defined by the isomorphism $t$ in {\bf T1} and Deligne's cycle class
     in $R^{2c-2}i_y^!\mu_{p^r}^{\otimes c-1}$.
\par
\medskip
\noindent
{\bf T5 (Product structure).}
{\it There is a unique morphism
$
\T_r(m)_X \otimes^{\bb L} \T_r(n)_X
      \to \T_r(m+n)_X$ in $D^-(X_{\et},\bb Z/p^r\bb Z)$
that extends the natural isomorphism
         $\mu_{p^r}^{\otimes m} \otimes \mu_{p^r}^{\otimes n}
           \simeq \mu_{p^r}^{\otimes m+n}$ on $X[1/p]$.}
\par
\medskip
Let $Z$ be another scheme which is flat of finite type over $S$
    and for which the objects $\T_r(n)_Z$ $(n \ge 0, r \ge 1)$ are defined.
Let $f : Z \to X$ be a morphism of schemes and let
    $\psi : Z[1/p] \to X[1/p]$ be the induced morphism. Then one has the following functoriality properties:
\par
\medskip    
\noindent{\bf T6 (Contravariant functoriality).}
There is a unique morphism
$$
\begin{CD}
f^*\T_r(n)_X
       @>>> \T_r(n)_Z 
\end{CD}
$$
in $D^b(Z_{\et},\bb Z/p^r\bb Z)$ that extends the natural isomorphism
    $\psi^*\mu_{p^r}^{\otimes n} \simeq \mu_{p^r}^{\otimes n}$
     on $Z[1/p]$.
\par
\medskip
\noindent
{\bf T7 (Covariant funtoriality).}
Assume that $f$ is proper, and put
    $c:=\dim(X)-\dim(Z)$. Then there is a unique morphism
$$
\begin{CD}
   Rf_*\T_r(n-c)_Z[-2c]
       @>>> \T_r(n)_X
\end{CD}
$$
in $D^b(X_{\et},\bb Z/p^r\bb Z)$ that extends the trace morphism
   $R\psi_*\mu_{p^r}^{\otimes n-c}[-2c] \to \mu_{p^r}^{\otimes n}$
     on $X[1/p]$.
\smallskip
\noindent

\begin{remark}\label{proposition_purity_tate_twists} Let $X$ be as above and $i : Z \hookrightarrow X$ a regular closed subscheme of characteristic $p$ and of codimension $c \,(\ge 1)$. Then by purity (T3) there are isomorphisms
$$H^{c+n}_Z(X,\T_r(n))\cong H^{n-c}(Z,W_r\Omega^{n-c}_{Z,\log}[-(n-c)]).$$
In particular, for $x\in X$ a point of characteristic $p$,
$$H^{c+n}_x(X,\T_r(n))\cong H^{n-c}(x,W_r\Omega^{n-c}_{x,\log}[-(n-c)])=:  H^{n-c}(x,\bb Z/p^r(n-c)).$$
\end{remark}


\section{$p$-adic tame Tate twists}\label{section_tametatetwists}
In this section let $A$ be a discrete valuation ring with perfect residue field of characteristic $p>0$ and fraction field $K$ of characteristic zero. Let $X$ be a regular semistable scheme which is flat of finite type over $S=\Spec A$. Let $Y$ be the special fiber and $X_K$ the generic fiber with inclusions
$X_K\xto{j} X \xleftarrow{i} Y.$

\subsection{The definition and first properties}
\begin{definition}
We define \textit{$p$-adic tame Tate twists} as follows:
$$\T_r^t(n)_X:=\tau_{\leq n}R\alpha_*\T_r(n)_X\in D^b((X/S)_{t},\bb Z/p^r\bb Z).$$
\end{definition}

Recall that $j$ denotes the morphism of sites $(X_K/K)_t \xrightarrow{} (X/S)_\t$.
\begin{lemma}\label{proposition_pushforward_vanishing_cycles}
There is an isomorphism
$$R\alpha_*Rj_*\mu_{p^r}^{\otimes n}\xto{\cong}Rj_*\alpha_*\mu_{p^r}^{\otimes n}\stackrel{}{=}Rj_*\mu_{p^r}^{\otimes n}$$
in $D^b((X/S)_{t},\bb Z/p^r\bb Z)$.
\end{lemma}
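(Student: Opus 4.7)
The plan is to reduce the statement to Theorem \ref{theorem_tame_coh_l_coef} by swapping the order of the two derived pushforwards. The key observation is that on the generic fiber $X_K$ the prime $p$ is invertible, so étale and tame cohomology agree there; the only remaining issue is the bookkeeping associated with the fact that ``$j$'' denotes the étale immersion on the left-hand side and its tame analogue on the right.

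First I would introduce the commutative square of sites
$$\begin{CD}
X_{K,\et} @>\alpha_K>> (X_K/K)_t \\
@Vj_\et VV @VVj_t V \\
X_\et @>\alpha >> (X/S)_t
\end{CD}$$
in which $\alpha_K,\alpha$ are the étale-to-tame changes of sites and $j_\et,j_t$ are the open immersions on the étale, respectively tame, sites. Proposition \ref{proposition_commutation_pushforward}(i), applied with $\alpha$ in place of $\epsilon$ (the proposition explicitly allows this substitution), supplies a canonical isomorphism of derived functors
$$R\alpha_*\circ Rj_{\et,*}\;\xto{\sim}\;Rj_{t,*}\circ R\alpha_{K,*}.$$
Evaluating on $\mu_{p^r}^{\otimes n}\in D^b(X_{K,\et},\bb Z/p^r\bb Z)$ therefore yields
$$R\alpha_* Rj_{\et,*}\mu_{p^r}^{\otimes n}\;\cong\;Rj_{t,*} R\alpha_{K,*}\mu_{p^r}^{\otimes n}.$$

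Next I would apply Theorem \ref{theorem_tame_coh_l_coef} to $X_K$: because $X_K$ is a $K$-scheme with $K$ of characteristic zero, the integer $p^r$ is invertible on $X_K$, and the remark after that theorem says precisely that $\alpha_{K,*}\mu_{p^r}^{\otimes n}\to R\alpha_{K,*}\mu_{p^r}^{\otimes n}$ is a quasi-isomorphism. Identifying $\alpha_{K,*}\mu_{p^r}^{\otimes n}$ with $\mu_{p^r}^{\otimes n}$ as a tame sheaf on $(X_K/K)_t$ and applying $Rj_{t,*}$ then gives the asserted chain of isomorphisms
$$R\alpha_* Rj_{\et,*}\mu_{p^r}^{\otimes n}\;\cong\;Rj_{t,*}\alpha_{K,*}\mu_{p^r}^{\otimes n}\;=\;Rj_{t,*}\mu_{p^r}^{\otimes n}.$$

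Honestly, there is no real obstacle here: both ingredients—the base-change-style commutation $R\alpha_*Rj_\et{}_* \cong Rj_t{}_*R\alpha_{K,*}$ and the vanishing of higher $\alpha$-pushforwards for $p$-invertible coefficients—are already available in the paper. The only thing that requires care is being explicit about which of the two $j$'s is intended on each side of the claimed isomorphism, which is why drawing the commutative square of sites at the outset is the cleanest way to organize the argument.
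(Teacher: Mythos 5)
Your proof is correct and follows exactly the paper's own argument: apply Proposition \ref{proposition_commutation_pushforward}(i) (with $\alpha$ in place of $\epsilon$) to commute $R\alpha_*$ past $Rj_*$, then invoke Theorem \ref{theorem_tame_coh_l_coef} on the generic fiber, where $p$ is invertible, to collapse $R\alpha_{K,*}\mu_{p^r}^{\otimes n}$ to $\alpha_{K,*}\mu_{p^r}^{\otimes n}$. The only difference is that you spell out the commutative square of sites and the distinction between the two $j$'s, which the paper leaves implicit.
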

\begin{proof}
By Proposition \ref{proposition_commutation_pushforward}(i) there is an isomorphism of derived functors
$$R\alpha_*Rj_*\mu_{p^r}^{\otimes n}\xto{\cong}Rj_*R\alpha_*\mu_{p^r}^{\otimes n}$$
and by Theorem \ref{theorem_tame_coh_l_coef} there is an isomorphism $R\alpha_*\mu_{p^r}^{\otimes n}\cong \alpha_*\mu_{p^r}^{\otimes n}$ since $p$ is invertible on $X_K/K$.
\end{proof}

\begin{proposition}\label{proposition_distinguished_triangle}
The object $\T_r^t(n)_X$ fits into a distinguished triangle of the form
$$i_*\nu^{n-1}_{Y,r,t}[-n-1]\xto{g} \T^t_r(n)_X\xto{t'}\tau_{\leq n}Rj_*\mu_{p^r}^{\otimes n}\xto{\sigma_{X,r}(n)}i_*\nu^{n-1}_{Y,r,t}[-n]$$
in $D^b((X/S)_{t},\bb Z/p^r\bb Z)$.
\end{proposition}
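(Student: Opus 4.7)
The strategy is to apply the functor $\tau_{\leq n}R\alpha_*$ to the defining \'etale distinguished triangle of $\T_r(n)_X$ recalled in Section~\ref{section_etaletatetwists}, and to identify each resulting vertex on the tame site using the commutation isomorphisms of Section~\ref{section_homological_algebra}. Since $R\alpha_*$ is triangulated, one first obtains the distinguished triangle
\[
R\alpha_* i_*\nu^{n-1}_{Y,r,\et}[-n-1]\to R\alpha_*\T_r(n)_X\to R\alpha_*\tau_{\leq n}Rj_*\mu_{p^r}^{\otimes n}\to R\alpha_* i_*\nu^{n-1}_{Y,r,\et}[-n]
\]
in $D^b((X/S)_{t},\bb Z/p^r\bb Z)$; the work lies in analyzing how each vertex behaves under the truncation $\tau_{\leq n}$.

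For the identifications I would argue as follows. By Proposition~\ref{proposition_commutation_pushforward}(ii) we have $R\alpha_*i_*\cong i_*R\alpha_*$, and combined with $\alpha_*\nu^{n-1}_{Y,r,\et}=\nu^{n-1}_{Y,r,t}$ from Proposition~\ref{pushforward_nu(n)}, the rightmost term $\tau_{\leq n}R\alpha_* i_*\nu^{n-1}_{Y,r,\et}[-n]$ collapses to $i_*\nu^{n-1}_{Y,r,t}[-n]$, since $R\alpha_*\nu^{n-1}_{Y,r,\et}$ sits in degrees $\geq 0$ and after the shift by $[-n]$ only its degree-$0$ cohomology survives the truncation. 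Lemma~\ref{lemma_grothendieck_sp_seq}(iii) applied to $G=\alpha_*$ and $F=j_*$ (valid since $j_*$ preserves injectives, which are $\alpha_*$-acyclic), combined with Lemma~\ref{proposition_pushforward_vanishing_cycles}, gives $\tau_{\leq n}R\alpha_*\tau_{\leq n}Rj_*\mu_{p^r}^{\otimes n}\cong \tau_{\leq n}Rj_*\mu_{p^r}^{\otimes n}$ on the tame site. The middle vertex is $\T_r^t(n)_X$ by definition, and I call the induced tame map $\sigma_{X,r}(n)$; completing it to a distinguished triangle in $D^b((X/S)_{t},\bb Z/p^r\bb Z)$ produces a candidate third vertex $F$ in the required shape.

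To conclude $F\cong \T_r^t(n)_X$, I would compare cohomology sheaves. From the long exact sequence attached to the pushforward triangle, $H^q(R\alpha_*\T_r(n)_X)$ agrees with $H^q$ of the middle vertex for $q<n$, and is the kernel of $H^n(\sigma_{X,r}(n))$ at $q=n$. The main obstacle is to verify that $F$ lies in degrees $\leq n$, equivalently that $H^n(\sigma_{X,r}(n))$ is surjective on the tame site; this is deduced from the analogous \'etale surjectivity (forced by $\T_r(n)_X\in D^{\leq n}$) together with the fact that both sheaves involved are generated \'etale- and hence tame-locally by Milnor $K$-theoretic $\mathrm{dlog}$-symbols. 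Concentration of $\T_r^t(n)_X$ in $[0,n]$ is then immediate from its definition as a $\tau_{\leq n}$-truncation, and uniqueness of the triple $(g,\T_r^t(n)_X,t')$ up to unique isomorphism follows from Lemma~\ref{lemma_uniqueness_D}(iii), after verifying via Lemma~\ref{lemma_hom_D} that $\Hom(\T_r^t(n)_X,\, i_*\nu^{n-1}_{Y,r,t}[-n-1])=0$ because the source is concentrated in degrees $\leq n$ and the target in degree $n+1$.
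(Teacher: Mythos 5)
Your proposal follows the paper's proof almost step for step: apply $R\alpha_*$ to Sato's defining triangle, identify the outer vertices via Proposition~\ref{proposition_commutation_pushforward}, Proposition~\ref{pushforward_nu(n)}, Lemma~\ref{lemma_grothendieck_sp_seq}(iii) and Lemma~\ref{proposition_pushforward_vanishing_cycles}, and reduce the fact that $\tau_{\leq n}$ preserves this particular triangle to the surjectivity of $H^n(\sigma_{X,r}(n))$ on the tame site. The one place where you are looser than the paper is exactly the step that carries the mathematical weight: you justify tame surjectivity by saying both sheaves are ``generated \'etale- and hence tame-locally by $\dlog$-symbols,'' but that implication runs the wrong way --- the tame topology is coarser than the \'etale topology, so \'etale-local generation does not yield tame-local generation; making such a statement precise is the nontrivial content of results like Proposition~\ref{proposition_logdRW_in_various_top}, which requires Gersten for Milnor $K$-theory and the Bloch--Kato--Gabber theorem. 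The paper instead evaluates $\sigma_{X,r}(n)$ on tame stalks, which are regular henselian local rings, identifies both source and target with Milnor $K$-theory mod $p^r$ in the smooth case (where the residue map is known to be surjective), and reduces the semistable case to the smooth one via the combinatorics of the special fiber. Your argument is structurally correct, but the ``hence'' needs to be replaced by this stalkwise identification.
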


\begin{proof}
Since derived  functors are exact, we get a distinguished triangle of the form
$$R\alpha_*\T_r(n)_X\to R\alpha_*\tau_{\leq n}Rj_*\mu_{p^r}^{\otimes n}\xto{\sigma_{X,r}(n)}R\alpha_*i_*\nu^{n-1}_{Y,r,\et}[-n]$$
in $D^b((X/S)_{t},\bb Z/p^r\bb Z)$.
Applying the truncation functor $\tau_{\leq n}$, in general, one does not again get a distinguished triangle. However, in this case the triangle
$$\T_r^t(n)_X\to \tau_{\leq n}R\alpha_*\tau_{\leq n}Rj_*\mu_{p^r}^{\otimes n}\xto{\sigma_{X,r}(n)}\tau_{\leq n}R\alpha_*i_*\nu^{n-1}_{Y,r,\et}[-n]$$
is exact. Indeed, by Lemma \ref{lemma_grothendieck_sp_seq}(iii) and Lemma \ref{proposition_pushforward_vanishing_cycles} we have that $$\tau_{\leq n}R\alpha_*\tau_{\leq n}Rj_*\mu_{p^r}^{\otimes n}\cong \tau_{\leq n}R\alpha_*Rj_*\mu_{p^r}^{\otimes n}\cong \tau_{\leq n}Rj_*\mu_{p^r}^{\otimes n}$$ and by Proposition \ref{proposition_commutation_pushforward}(ii) and Proposition \ref{pushforward_nu(n)} $$\tau_{\leq n}R\alpha_*i_*\nu^{n-1}_{Y,r,\et}[-n]\cong i_*\nu^{n-1}_{Y,r,t}[-n].$$ 
Furthermore, the induced map $$\sigma_{X,r}(n): R^nj_*\mu_{p^r}^{\otimes n}\to i_*\nu^{n-1}_{Y,r,t}$$ is surjective in the tame topology; assuming $X$ is smooth, both terms, evaluated on stalks (that is in particular regular henselian local rings), may be identified with Milnor K-theory (see for example \cite[Thm. 1.7, Rem. 1.8]{LuMo20}), where the map is known to be surjective. The semi-stable case can be deduced from the smooth case using combinatorics of the special fiber.
\end{proof}

\begin{proposition}
$\T_r^t(n)_X$ is concentrated in $[0,n]$ and the triple $(g,\T_r^t(n)_X,t')$ is unique up to unique isomorphism and $g$ is determined by the pair $(\T_r^t(n)_X,t')$.
\end{proposition}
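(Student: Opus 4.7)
The plan is to handle the two assertions separately, each reducing to standard homological algebra from Section \ref{section_homological_algebra}.

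For concentration in $[0,n]$, the upper bound is immediate from the truncation $\tau_{\leq n}$ appearing in the definition $\T_r^t(n)_X := \tau_{\leq n}R\alpha_*\T_r(n)_X$. For the lower bound, I would use that $\T_r(n)_X$ lies in $D^{\geq 0}$ by property T2 of the $p$-adic \'etale Tate twists. Since $\alpha_*$ is left exact by Proposition \ref{proposition_exactness_functors}(ii), its right derived functor carries $D^{\geq 0}$ into $D^{\geq 0}$, so $R\alpha_*\T_r(n)_X$ has no cohomology in negative degrees. Applying $\tau_{\leq n}$ then yields an object of $D^{[0,n]}$.

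For the uniqueness of the triple $(g, \T_r^t(n)_X, t')$ and the determinacy of $g$, my plan is to apply Lemma \ref{lemma_uniqueness_D}(iii) to the distinguished triangle of Proposition \ref{proposition_distinguished_triangle}, with $\cal N_1 = i_*\nu^{n-1}_{Y,r,t}[-n-1]$, $\cal N_2 = \T^t_r(n)_X$ and $\cal N_3 = \tau_{\leq n}Rj_*\mu_{p^r}^{\otimes n}$. Under this identification our $g$ plays the role of the lemma's $f$ and our $t'$ plays the role of the lemma's $g$, so the hypothesis I need is the vanishing $\Hom_D(\cal N_2, \cal N_1) = \Hom_D(\T_r^t(n)_X,\, i_*\nu^{n-1}_{Y,r,t}[-n-1]) = 0$. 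This is a direct instance of Lemma \ref{lemma_hom_D}: the first factor is concentrated in degrees $\leq n$ by the first part, while the target is $(i_*\nu^{n-1}_{Y,r,t})[-(n+1)]$ with $i_*\nu^{n-1}_{Y,r,t}$ a sheaf in degree $0$, so the relevant integers are $m = n$ and $q = n+1$, and $q > m$ forces the $\Hom$ group to vanish.

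The step I expect to demand the most care is not a genuine obstacle but a matter of bookkeeping: one must match the variables in Lemma \ref{lemma_uniqueness_D}(iii) in the correct direction so that what is produced is literally ``$g$ is determined by $(\T_r^t(n)_X, t')$'' and not the dual assertion. Once this is pinned down, nothing else is needed; the statement is a purely formal consequence of concentration in $[0,n]$ together with the $\Hom$-vanishing and uniqueness lemmas collected in Section \ref{section_homological_algebra}.
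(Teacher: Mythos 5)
Your proposal is correct and follows essentially the same route as the paper: the paper likewise deduces $\Hom(\T_r^t(n)_X,\, i_*\nu^{n-1}_{Y,r,t}[-n-1])=0$ from Lemma \ref{lemma_hom_D} using that $\T_r^t(n)_X$ is concentrated in degrees $\leq n$, and then invokes Lemma \ref{lemma_uniqueness_D}(iii). Your explicit justification of the lower bound of the concentration range (T2 plus left exactness of $\alpha_*$) is a point the paper passes over with ``by definition,'' but it is the right argument.
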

\begin{proof}
By Proposition \ref{proposition_distinguished_triangle} there is a distinguished triangle of the form
$$i_*\nu^{n-1}_{Y,r,t}[-n-1]\xto{g} \T^t_r(n)_X\xto{t'}\tau_{\leq n}Rj_*\mu_{p^r}^{\otimes n}\xto{\sigma_{X,r}(n)}i_*\nu^{n-1}_{Y,r,t}[-n]$$
in $D^b((X/S)_{t},\bb Z/p^r\bb Z)$. Since by definition $\T^t_r(n)_X$ is concentrated in $[0,n]$, Lemma \ref{lemma_hom_D} implies that $$\Hom_{D^b((X/S)_{t},\bb Z/p^r\bb Z)}(\T^t_r(n)_X,i_*\nu^{n-1}_{Y,r,t}[-n-1]) =0.$$ 
The uniqueness now follows from Lemma \ref{lemma_uniqueness_D}(iii).
\end{proof}

\subsection{The Beilinson-Lichtenbaum conjecture for tame Tate twists}
The analogue of the Beilinson-Lichtenbaum conjecture for logarithmic de Rham-Witt sheaves in the \'etale topology in mixed characteristic says that there is an isomorphism 
$$\bb  Z(n)_X/p^r\xrightarrow{\cong} \tau_{\leq n}R\epsilon_*\T_r(n)_X.$$
If $X$ is smooth this is shown in \cite{Geisser2004}. 
\begin{proposition}[Beilinson-Lichtenbaum]\label{proposition_BL}
Let $X$ be smooth over $S$. Then there is an isomorphism
$$\bb  Z(n)_X/p^r\xto{\cong}\tau_{\leq n}R\beta_*\T_r^t(n)_X. $$
\end{proposition}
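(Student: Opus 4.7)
The plan is to reduce the assertion to the étale Beilinson--Lichtenbaum theorem in mixed characteristic, namely Geisser's isomorphism $\bb Z(n)_X/p^r \xto{\cong} \tau_{\leq n}R\epsilon_*\T_r(n)_X$ in $D^b(X_\Nis,\bb Z/p^r)$, by a short formal argument exactly parallel to the proof of Proposition \ref{proposition_BL_dRW}.

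The first step is to expand the right-hand side using the definition $\T_r^t(n)_X = \tau_{\leq n}R\alpha_*\T_r(n)_X$, so that
\[
\tau_{\leq n}R\beta_*\T_r^t(n)_X \;=\; \tau_{\leq n}R\beta_*\,\tau_{\leq n}R\alpha_*\T_r(n)_X.
\]
The second step is to apply Lemma \ref{lemma_grothendieck_sp_seq}(iii) to the factorisation $\epsilon_* = \beta_*\circ\alpha_*$ of left exact functors. Its hypothesis (i) asks that $\alpha_*$ send injective étale sheaves to $\beta_*$-acyclic tame sheaves, which is guaranteed by Proposition \ref{proposition_exactness_functors}(ii) since $\alpha_*$ preserves injectives. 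The lemma then yields canonical isomorphisms
\[
\tau_{\leq n}R\beta_*\,\tau_{\leq n}R\alpha_*\T_r(n)_X \;\cong\; \tau_{\leq n}\bigl(R\beta_*\circ R\alpha_*\bigr)\T_r(n)_X \;\cong\; \tau_{\leq n}R\epsilon_*\T_r(n)_X,
\]
and composing with Geisser's étale isomorphism delivers the statement.

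I do not anticipate any genuine obstacle, as every ingredient is already present in the paper: the exactness/adjunction properties of $\alpha$, the compatibility of truncation with the composition of derived pushforwards provided by Lemma \ref{lemma_grothendieck_sp_seq}(iii), and the étale Beilinson--Lichtenbaum theorem for smooth $X/S$. The only subtle point worth checking is that the inner truncation $\tau_{\leq n}$ in the definition of $\T_r^t(n)_X$ does not interfere when one applies the outer $\tau_{\leq n}R\beta_*$; this is precisely the content of the second isomorphism in Lemma \ref{lemma_grothendieck_sp_seq}(iii), and it is the reason the same template of proof successfully handled Proposition \ref{proposition_BL_dRW} in equal characteristic.
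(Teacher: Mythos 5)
Your proof is correct and is essentially identical to the paper's: both unfold the definition $\T_r^t(n)_X=\tau_{\leq n}R\alpha_*\T_r(n)_X$, invoke Lemma \ref{lemma_grothendieck_sp_seq}(iii) (justified by $\alpha_*$ preserving injectives, so $R\beta_*R\alpha_*=R\epsilon_*$ and the inner truncation is harmless), and conclude with Geisser's \'etale isomorphism $\bb Z(n)_X/p^r\xto{\cong}\tau_{\leq n}R\epsilon_*\T_r(n)_X$. No further comment is needed.
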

\begin{proof}
The forgetful functor $\alpha_*$ is left exact and preserves injectives, therefore $R\beta_*R\alpha_*=R\epsilon_*$ and in particular
$$\tau_{\leq n}R\beta_*\T_r^t(n)_X \stackrel{Def.}{\cong}\tau_{\leq n}R\beta_*\tau_{\leq n}R\alpha_*\T_r(n)_X \stackrel{Lem.\; \ref{lemma_grothendieck_sp_seq}(iii)}{\cong} \tau_{\leq n}R\epsilon_*\T_r(n)_X\xleftarrow{\cong} \bb  Z(n)_X/p^r.$$
\end{proof}

\subsection{Purity and product structure}
\begin{proposition}(Semi-Purity)\label{proposition_semi_purity_Tate_twists} Let $i : Z \hookrightarrow X$ be a locally closed regular subscheme
          of characteristic $p$ and of codimension
          $c \,(\ge 1)$.
         Then there is a Gysin isomorphism
$$
\begin{CD}
     W_r\Omega_{Z,\log,t}^{n-c}[-n-c] @>{\simeq}>> \tau_{\leq n+c} R i^!\T^t_r(n)_X
      \quad \hbox{ in } \; D^b(Z_{t},\bb Z/p^r\bb Z).
\end{CD}
$$
\end{proposition}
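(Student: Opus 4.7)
The strategy mirrors the proof of Proposition \ref{proposition_semi_purity_logdeRhamWiss}: apply $\tau_{\leq n+c} R\alpha_*$ to Sato's \'etale Gysin isomorphism (property T3)
$$W_r\Omega_{Z,\log,\et}^{n-c}[-n-c] \xrightarrow{\simeq} \tau_{\leq n+c} Ri^! \T_r(n)_X \quad \mbox{in } D^b(Z_{\et},\bb Z/p^r\bb Z).$$
On the left-hand side the shift $[-n-c]$ places the sheaf in degree $n+c$, so $\tau_{\leq n+c} R\alpha_*$ extracts $\alpha_* W_r\Omega_{Z,\log,\et}^{n-c} = W_r\Omega_{Z,\log,t}^{n-c}$ by Proposition \ref{proposition_logdRW_in_various_top}, yielding $W_r\Omega_{Z,\log,t}^{n-c}[-n-c]$.

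For the right-hand side I would chain together Lemma \ref{lemma_grothendieck_sp_seq}(iii) to drop the inner truncation and Proposition \ref{proposition_commutation_pushforward}(iii) to commute $R\alpha_*$ past $Ri^!$:
$$\tau_{\leq n+c} R\alpha_* \tau_{\leq n+c} Ri^! \T_r(n)_X \;\cong\; \tau_{\leq n+c} R\alpha_* Ri^! \T_r(n)_X \;\cong\; \tau_{\leq n+c} Ri^! R\alpha_* \T_r(n)_X.$$
The proof is then finished by identifying this last complex with $\tau_{\leq n+c} Ri^! \T_r^t(n)_X = \tau_{\leq n+c} Ri^! \tau_{\leq n} R\alpha_* \T_r(n)_X$. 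For this, consider the distinguished triangle $\T_r^t(n)_X \to R\alpha_* \T_r(n)_X \to \tau_{>n} R\alpha_* \T_r(n)_X$, apply $Ri^!$, and pass to the long exact sequence; the identification reduces to the vanishing
$$\tau_{\leq n+c} Ri^!_t \,\tau_{>n} R\alpha_* \T_r(n)_X \;=\; 0.$$

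The main obstacle is this last vanishing, and it is the step that requires the most care: via the hypercohomology spectral sequence it amounts to $R^p i^!_t R^s \alpha_* \T_r(n)_X = 0$ for all $p<c$ and $s>n$. My preferred route is to sidestep it by computing $\tau_{\leq n+c} Ri^!_t R\alpha_* \T_r(n)_X$ \emph{independently} using the Grothendieck spectral sequence for $Ri^!_t R\alpha_{X,*} \cong R\alpha_{Z,*} Ri^!_{\et}$ (Proposition \ref{proposition_commutation_pushforward}(iii)); Sato's \'etale semi-purity gives $R^s i^!_{\et} \T_r(n)_X = 0$ for $s<n+c$ and $R^{n+c} i^!_{\et} \T_r(n)_X = W_r\Omega_{Z,\log,\et}^{n-c}$, so that this spectral sequence degenerates in the range $q\leq n+c$ and collapses to $W_r\Omega_{Z,\log,t}^{n-c}[-n-c]$. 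Combining this with the long exact sequence above forces the required vanishing and simultaneously closes the proof.
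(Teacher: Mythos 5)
Up to the point where you start worrying about truncations, your argument is exactly the paper's: apply $\tau_{\leq n+c}R\alpha_*$ to Sato's {\bf T3}, identify the left-hand side as $W_r\Omega^{n-c}_{Z,\log,t}[-n-c]$, and invoke Lemma \ref{lemma_grothendieck_sp_seq}(iii) together with Proposition \ref{proposition_commutation_pushforward}(iii); your ``independent'' Grothendieck spectral sequence computation of $\tau_{\leq n+c}Ri^!_{t}R\alpha_*\T_r(n)_X\cong W_r\Omega^{n-c}_{Z,\log,t}[-n-c]$ is just an unrolled version of those two citations and is correct. You have, moreover, put your finger on the genuinely delicate point, which the paper's proof passes over in silence: the cited lemmas, applied at truncation level $n+c$ throughout, identify the right-hand side with $\tau_{\leq n+c}Ri^!\tau_{\leq n+c}R\alpha_*\T_r(n)_X$, whereas the proposition is about $\tau_{\leq n+c}Ri^!\tau_{\leq n}R\alpha_*\T_r(n)_X$, and the two differ by the contribution of $C:=\tau_{>n}R\alpha_*\T_r(n)_X$, a complex which is supported on $Y$ but nonzero in general (its cohomology sheaves involve $R^{s}\alpha_*$ of the nearby cycles and of $\nu^{n-1}_{Y,r,\et}$).

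Your proposed resolution of this point, however, does not close. Knowing $\tau_{\leq n+c}Ri^!R\alpha_*\T_r(n)_X$ and writing down the long exact sequence
$$R^{q-1}i^!C\to R^{q}i^!\T^t_r(n)_X\to R^{q}i^!R\alpha_*\T_r(n)_X\to R^{q}i^!C$$
does not ``force'' $\tau_{\leq n+c}Ri^!C=0$: an exact sequence never determines one of its three terms from the other two, so this step is a non sequitur. What left exactness of $i^!$ gives for free is only $R^{s}i^!C=0$ for $s\leq n$, which settles the range $q\leq n$; for $n<q\leq n+c$ one still needs $R^{q-1}i^!C=0$ and the vanishing of the map $R^{q}i^!R\alpha_*\T_r(n)_X\to R^{q}i^!C$, and via your own hypercohomology spectral sequence this amounts to a semi-purity statement $R^{p}i^!\bigl(R^{s}\alpha_*\T_r(n)_X\bigr)=0$ for $p<c$, $s>n$, for tame sheaves supported on $Y$ that are not locally constant and for which no purity result is available in the paper. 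So the gap you correctly isolated is still open at the end of your argument; an honest proof needs either such a semi-purity input or a direct argument that $\tau_{(n,n+c]}R\alpha_*\T_r(n)_X$ does not contribute after applying $Ri^!$ in degrees $\leq n+c$.
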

\begin{proof}
By property T3 of the $p$-adic \'etale Tate twists there is an isomorphism $W_r\Omega_{Z,\log,\et}^{n-c}[-n-c]\xto{\cong}\tau_{\leq n+c} R i^!\T_r(n)_X$ in $D^b(Z_{\et},\bb Z/p^r\bb Z)$. 
Applying $\tau_{\leq n+c}R\alpha_*$, we get an isomorphism $$W_r\Omega_{Z,\log,t}^{n-c}[-n-c]=\tau_{\leq n+c}R\alpha_*W_r\Omega_{Z,\log,\et}^{n-c}[-n-c]\xto{\cong}\tau_{\leq n+c}R\alpha_*\tau_{\leq n+c} R i^!\T_r(n)_X$$ in $D^b(Z_{t},\bb Z/p^r\bb Z)$. By Lemma \ref{lemma_grothendieck_sp_seq}(iii) and Proposition \ref{proposition_commutation_pushforward}(iii) we get the isomorphism of the proposition.
\end{proof}

\begin{remark} Let $i : Z \hookrightarrow X$ be a locally closed regular subscheme
          of characteristic $p$ and of codimension
          $c \,(\ge 1)$.
          We do not know if one should expect the Gysin morphism
$$
\begin{CD}
     W_r\Omega_{Z,\log,t}^{n-c}[-n-c] \to R i^!\T^t_r(n)_X
      \quad \hbox{ in } \; D^b(Z_{t},\bb Z/p^r\bb Z).
\end{CD}
$$
to be an isomorphism without truncation.
\end{remark}

\begin{proposition}(Product structure)\label{proposition_product_structure} There is a morphism
$$\T_r^t(n)_X\otimes^{\bb L}\T_r^t(m)_X\to \T_r^t(n+m)_X  $$
in $D^b((X/S)_{t},\bb Z/p^r\bb Z)$ that extends the natural isomorphism $\mu_{p^r}^{\otimes n}\otimes \mu_{p^r}^{\otimes m}\cong \mu_{p^r}^{\otimes n+m}$ on $X[\frac{1}{p}]$.
\end{proposition}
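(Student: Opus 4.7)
The plan is to push forward Sato's \'etale product structure (property T5) along the morphism of sites $\alpha\colon X_{\et}\to (X/S)_t$ and then truncate. Starting from T5 we have a morphism
\begin{equation*}
\T_r(n)_X\otimes^{\bb L}\T_r(m)_X\to \T_r(n+m)_X
\end{equation*}
in $D^-(X_{\et},\bb Z/p^r\bb Z)$. Since $\alpha^*$ is strong monoidal (being the inverse image functor of a morphism of sites), by adjunction we obtain a natural lax monoidal transformation $R\alpha_*A\otimes^{\bb L}R\alpha_*B\to R\alpha_*(A\otimes^{\bb L} B)$. Composing this with $R\alpha_*$ applied to the \'etale product gives
\begin{equation*}
R\alpha_*\T_r(n)_X\otimes^{\bb L}R\alpha_*\T_r(m)_X\to R\alpha_*\bigl(\T_r(n)_X\otimes^{\bb L}\T_r(m)_X\bigr)\to R\alpha_*\T_r(n+m)_X
\end{equation*}
in $D^-((X/S)_t,\bb Z/p^r\bb Z)$.

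Next, precomposing with the derived tensor product of the canonical truncation maps $\T_r^t(n)_X=\tau_{\leq n}R\alpha_*\T_r(n)_X\to R\alpha_*\T_r(n)_X$ (and similarly for $m$) produces a morphism
\begin{equation*}
\T_r^t(n)_X\otimes^{\bb L}\T_r^t(m)_X\to R\alpha_*\T_r(n+m)_X.
\end{equation*}
By acyclicity, each of $\T_r^t(n)_X$ and $\T_r^t(m)_X$ is concentrated in cohomological degrees $[0,n]$ and $[0,m]$ respectively, so their derived tensor product is concentrated in degrees $\leq n+m$ (using flat resolutions truncated in the appropriate range). By the universal property of the truncation functor $\tau_{\leq n+m}$, the displayed map factors uniquely through
\begin{equation*}
\tau_{\leq n+m}R\alpha_*\T_r(n+m)_X=\T_r^t(n+m)_X,
\end{equation*}
yielding the desired morphism.

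To verify the compatibility on $X[1/p]$, I would apply $j^*$ (with $j$ the inclusion of the generic fibre), use Proposition \ref{proposition_commutation_pushforward}(i)(iv) to commute $j^*$ past $R\alpha_*$ and past $\tau_{\leq n}$ in the appropriate sense (together with Lemma \ref{proposition_pushforward_vanishing_cycles}), and thereby reduce to the well-known fact that the \'etale product of property T5 extends $\mu_{p^r}^{\otimes n}\otimes\mu_{p^r}^{\otimes m}\cong\mu_{p^r}^{\otimes n+m}$.

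The main obstacle is of a technical bookkeeping nature rather than a deep one: constructing the lax monoidal natural transformation $R\alpha_*A\otimes^{\bb L}R\alpha_*B\to R\alpha_*(A\otimes^{\bb L}B)$ at the level of the derived category of abelian sheaves, and verifying that the subsequent truncation step is well behaved, i.e.\ that $\T_r^t(n)_X\otimes^{\bb L}\T_r^t(m)_X$ really has no cohomology above degree $n+m$. Both points are standard once one fixes representatives by $K$-flat resolutions bounded in the correct range, which is possible because we work with $\bb Z/p^r\bb Z$-modules and complexes that are cohomologically bounded above.
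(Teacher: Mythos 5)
Your construction is correct, but it takes a genuinely different route from the paper. The paper mimics Sato's axiomatic method internally to the tame site: starting from $t'\otimes t'$ it builds the composite $\T_r^t(n)_X\otimes^{\bb L}\T_r^t(m)_X\to \tau_{\leq n}Rj_*\mu_{p^r}^{\otimes n}\otimes^{\bb L}\tau_{\leq m}Rj_*\mu_{p^r}^{\otimes m}\to \tau_{\leq n+m}Rj_*\mu_{p^r}^{\otimes n+m}\xto{\sigma_{X,r}(n+m)} i_*\nu^{n+m-1}_{Y,r}[-n-m]$, checks that this composite vanishes (by reducing to top cohomology sheaves via Lemma \ref{lemma_hom_D} and invoking Sato's computation), and then lifts uniquely through the distinguished triangle of Proposition \ref{proposition_distinguished_triangle} using Lemma \ref{lemma_uniqueness_D}(i). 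You instead transport Sato's already-constructed \'etale product T5 along $\alpha$ via the lax monoidal structure of $R\alpha_*$ and then truncate, using that $\T_r^t(n)_X\otimes^{\bb L}\T_r^t(m)_X$ sits in degrees $\leq n+m$ (correct, by bounded-above flat resolutions) so that the map factors through $\tau_{\leq n+m}$ by adjunction. Your route is shorter, handles $n=0$ or $m=0$ uniformly, and makes compatibility with the \'etale product automatic, at the cost of having to set up the derived lax monoidal transformation carefully; the paper's route stays entirely within the tame triangle and, importantly, delivers the uniqueness of the factorization through $\T_r^t(n+m)_X$ via Lemma \ref{lemma_uniqueness_D}(i), which your argument does not address (though the proposition as stated only asserts existence, and the paper's uniqueness argument would in any case identify your product with theirs). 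One cosmetic caveat: since $\bb Z/p^r\bb Z$ has infinite Tor-dimension over itself, the source of the product naturally lives in $D^-$ rather than $D^b$, exactly as in Sato's T5; this affects neither argument.
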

\begin{proof}
If $n=0$ (or similarly $m=0$), then we get an isomorphism $\bb Z/p^r \bb Z_X\otimes^{\bb L}\T_r^t(m)_X\xto{\cong} \T_r^t(m)_X$. Assume that $m,n\geq 1$. Then there is a composite morphism
$$\T_r^t(n)_X\otimes^{\bb L}\T_r^t(m)_X\to \tau_{\leq n}Rj_*\mu_{p^r}^{\otimes n}\otimes^{\bb L}\tau_{\leq m}Rj_*\mu_{p^r}^{\otimes m}\to \tau_{\leq n+m}Rj_*\mu_{p^r}^{\otimes n+m} \to i_*\nu^{n+m-1}_{Y,r}[-n-m]  $$
The second map is induced by the map $\mu_{p^r}^{\otimes n}\otimes \mu_{p^r}^{\otimes m}\to \mu_{p^r}^{\otimes n+m}$ on the generic fiber. The composition is zero. Indeed, we can check this on the top cohomology sheaves (cf. Lemma \ref{lemma_hom_D}) and by Proposition \ref{proposition_BL} we may refer to the proof of \cite[Prop. 4.2.6]{Sa07}. The uniqueness assertion now follows \ref{lemma_uniqueness_D}(i).
\end{proof}

\subsection{Functoriality}
Let $Z$ be another scheme which is flat of finite type over $S$
    and for which the objects $\T_r(n)_Z$ $(n \ge 0, r \ge 1)$ are defined.
Let $f : Z \to X$ be a morphism of schemes and let
    $\psi : Z[1/p] \to X[1/p]$ be the induced morphism. 
\begin{proposition}
{(Contravariant functoriality).}
There is a unique morphism
$$
\begin{CD}
f^*\T^t_r(n)_X
       @>>> \T^t_r(n)_Z \quad \hbox { in } \; D^b((Z/S)_{\t},\bb Z/p^r\bb Z)
\end{CD}
$$
that extends the natural isomorphism
    $\psi^*\mu_{p^r}^{\otimes n} \simeq \mu_{p^r}^{\otimes n}$
     on $Z[1/p]$.
\end{proposition}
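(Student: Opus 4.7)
The plan is to deduce the result from property T6 for $p$-adic \'etale Tate twists by pushing forward along $R\alpha_*$ and truncating, following the same pattern used in Propositions \ref{proposition_semi_purity_Tate_twists} and \ref{proposition_product_structure}. Starting from the \'etale contravariant functoriality morphism $f^*_{\et}\T_r(n)_X \to \T_r(n)_Z$ supplied by T6 and applying $\tau_{\leq n}R\alpha_{Z,*}$, I obtain a morphism with target $\T_r^t(n)_Z$. To identify the source with $f^*_t\T_r^t(n)_X$, I would use the base change morphism $f^*_t R\alpha_{X,*} \to R\alpha_{Z,*}f^*_{\et}$ obtained by adjunction from the on-the-nose identity $\alpha_{X,*}(f_{\et})_* = (f_t)_*\alpha_{Z,*}$; this identity holds because the underlying categories of the tame and \'etale sites coincide and the $\alpha$-pushforward is the forgetful functor on sections. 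Combining this with the exactness of $f^*_t$ (Proposition \ref{proposition_exactness_functors}(i)), which gives $f^*_t\tau_{\leq n} = \tau_{\leq n}f^*_t$, produces the desired composite $f^*_t\T_r^t(n)_X \to \T_r^t(n)_Z$. That this composite extends the natural isomorphism on $Z[1/p]$ would follow from the analogous property in the \'etale case together with the identification $R\alpha_*\mu_{p^r}^{\otimes n} \cong \mu_{p^r}^{\otimes n}$ on the generic fibers (Theorem \ref{theorem_tame_coh_l_coef}) and the compatibility of the base-change map with $j^*$.

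For uniqueness, I would follow the strategy of the proof of Proposition \ref{proposition_product_structure}. Given two morphisms $\phi_1, \phi_2$ both extending the natural isomorphism on $Z[1/p]$, the composition of the difference $\phi := \phi_1 - \phi_2$ with the trivialization $t'_Z: \T_r^t(n)_Z \to \tau_{\leq n}Rj_*\mu_{p^r}^{\otimes n}$ from Proposition \ref{proposition_distinguished_triangle} vanishes, since both $t'_Z \circ \phi_i$ are determined by the common restriction on $Z_K$. The long exact sequence obtained by applying $\Hom_{D^b}(f^*_t\T_r^t(n)_X, -)$ to the distinguished triangle of Proposition \ref{proposition_distinguished_triangle} for $Z$ then implies that $\phi$ lies in the image of $\Hom_{D^b}(f^*_t\T_r^t(n)_X, i_*\nu^{n-1}_{Y',r,t}[-n-1])$, where $Y'$ denotes the special fiber of $Z$. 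Since $f^*_t$ is exact, $f^*_t\T_r^t(n)_X$ is concentrated in degrees $\leq n$, so Lemma \ref{lemma_hom_D} applied with $m = n$ and $q = n+1$ forces this $\Hom$-group to vanish, whence $\phi = 0$.

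The main obstacle is not a conceptual difficulty but rather the careful bookkeeping required to verify that the base-change morphism $f^*_t R\alpha_{X,*} \to R\alpha_{Z,*}f^*_{\et}$ is compatible with restriction to the generic fiber, so that the constructed map truly recovers $\psi^*\mu_{p^r}^{\otimes n} \cong \mu_{p^r}^{\otimes n}$ on $Z[1/p]$. Once this compatibility is in place, the remainder of the argument is formal and closely parallels the proofs of contravariant functoriality in the \'etale setting and of the previously established properties of tame Tate twists in this article.
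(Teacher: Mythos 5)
Your proof is correct in substance, but it takes a genuinely different route from the paper. The paper's proof is a one-liner: ``analogous to the proof of Proposition \ref{proposition_product_structure}.'' Unpacked, that means: form the composite
$f^*\T^t_r(n)_X \to f^*\tau_{\leq n}Rj_*\mu_{p^r}^{\otimes n} \to \tau_{\leq n}Rj_{Z,*}\mu_{p^r}^{\otimes n} \xto{\sigma_{Z,r}(n)} i_*\nu^{n-1}_{Y_Z,r,t}[-n]$ directly inside the tame topology using the defining distinguished triangle of Proposition \ref{proposition_distinguished_triangle} for $Z$, check on top cohomology sheaves (via Lemma \ref{lemma_hom_D} and the reduction to Sato's \'etale computation) that the composite vanishes, and then invoke Lemma \ref{lemma_uniqueness_D}(i), which delivers existence \emph{and} uniqueness of the factorization through $\T^t_r(n)_Z$ in a single stroke. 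You instead descend the \'etale morphism from T6 through the base-change transformation $f^*_t R\alpha_{X,*}\to R\alpha_{Z,*}f^*_{\et}$ and then argue uniqueness separately from the triangle plus Lemma \ref{lemma_hom_D}. Both are legitimate: your route reuses T6 and avoids re-verifying the vanishing of the composite with Kato's residue map, at the cost of having to set up and check compatibilities of the base-change map with $j^*$ (which you correctly identify as the real bookkeeping burden); the paper's route stays entirely within the tame triangle and gets uniqueness for free from Lemma \ref{lemma_uniqueness_D}(iii)/(i). Two small points: the exactness of $f^*_t$ is true (the underlying functor preserves finite limits and exactness of tame sheaf sequences is checked on stalks), but Proposition \ref{proposition_exactness_functors}(i) as stated only covers the change-of-topology functors $\epsilon^*,\alpha^*,\beta^*$, not pullback along a general $f:Z\to X$, so that citation should be adjusted; and in the uniqueness step the claim that $t'_Z\circ\phi_i$ is determined by its restriction to $Z_K$ deserves the explicit justification that $\Hom(f^*_t\T^t_r(n)_X,\tau_{>n}Rj_*\mu_{p^r}^{\otimes n}[-1])=0$, again by Lemma \ref{lemma_hom_D}.
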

\begin{proof}
The proof is analogous to the proof of Proposition \ref{proposition_product_structure}.
\end{proof}
We also expect $p$-adic tame Tate twists to have the following functoriality property, the proof for $p$-adic \'etale Tate twists is long and difficult \cite[Thm. 7.1.1]{Sa07} and we leave the adaptation for the moment.
\begin{proposition}
{(Covariant funtoriality).}
Assume that $f$ is proper, and put
    $c:=\dim(X)-\dim(Z)$. Then there is a unique morphism
$$
\begin{CD}
   Rf_*\T^t_r(n-c)_Z[-2c]
       @>>> \T^t_r(n)_X \quad \hbox { in } \; D^b((X/S)_{\t},\bb Z/p^r\bb Z)
\end{CD}
$$
that extends the trace morphism
   $R\psi_*\mu_{p^r}^{\otimes n-c}[-2c] \to \mu_{p^r}^{\otimes n}$
     on $X[1/p]$.
\end{proposition}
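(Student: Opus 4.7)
The plan is to transport Sato's \'etale construction across the tame--\'etale comparison, and then characterize the resulting morphism using the distinguished triangle of Proposition \ref{proposition_distinguished_triangle}. Sato's Theorem 7.1.1 provides a morphism $\rho : Rf_*^{\et}\T_r(n-c)_Z[-2c] \to \T_r(n)_X$ in $D^b(X_{\et},\bb Z/p^r\bb Z)$ extending the trace on the generic fiber. Since $\alpha_*$ and the pushforwards on both sites are right adjoints of exact functors and hence preserve injectives, and since the underived equality $\alpha_*f_*^{\et} = f_*^t\alpha_*$ holds on sheaves (both sides evaluated on an \'etale $V \to X$ return $F(V \times_X Z)$), Lemma \ref{lemma_grothendieck_sp_seq}(ii) yields a natural isomorphism $R\alpha_* \circ Rf_*^{\et} \cong Rf_*^t \circ R\alpha_*$. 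Applying $R\alpha_*$ to $\rho$ and precomposing with $Rf_*^t$ of the canonical map $\T^t_r(n-c)_Z \to R\alpha_*\T_r(n-c)_Z$ produces the composite
$$\widetilde\rho : Rf_*^t\T^t_r(n-c)_Z[-2c] \longrightarrow R\alpha_*\T_r(n)_X.$$

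The main obstacle is showing that $\widetilde\rho$ factors through the subobject $\T^t_r(n)_X = \tau_{\leq n}R\alpha_*\T_r(n)_X$. Because $\T^t_r(n-c)_Z$ is concentrated in $[0,n-c]$, the shifted source $Rf_*^t\T^t_r(n-c)_Z[-2c]$ generally has nonzero cohomology above degree $n$, so the factorization through $\tau_{\leq n}$ is not formal; equivalently, the composition of $\widetilde\rho$ with $R\alpha_*\T_r(n)_X \to \tau_{>n}R\alpha_*\T_r(n)_X$ must be shown to vanish. I would attempt this by analyzing the high-degree cohomology sheaves of $R\alpha_*\T_r(n)_X$ via Proposition \ref{proposition_distinguished_triangle}, combined with Kato's compatibility T4 for the \'etale Tate twists, which controls the boundary maps that quantify the difference between $R\alpha_*\T_r(n)_X$ and its truncation. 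An alternative route is to work directly with the triangle in Proposition \ref{proposition_distinguished_triangle}: construct a map $Rf_*^t\T^t_r(n-c)_Z[-2c] \to \tau_{\leq n}Rj_*\mu_{p^r}^{\otimes n}$ from the trace on $X_K$, show that its composition with Kato's residue $\sigma_{X,r}(n)$ vanishes, and lift to $\T^t_r(n)_X$ via Lemma \ref{lemma_uniqueness_D}(i). This second approach trades the truncation obstacle for an explicit compatibility of $Rf_*$ with Kato's residue maps, which is one of the more delicate points in Sato's original proof.

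For uniqueness, any two extensions of the trace differ by a morphism $\delta : Rf_*^t\T^t_r(n-c)_Z[-2c] \to \T^t_r(n)_X$ whose restriction to $X_K$ vanishes. By Lemma \ref{lemma_uniqueness_D}(i) applied to the triangle of Proposition \ref{proposition_distinguished_triangle}, $\delta$ factors through $i_*\nu^{n-1}_{Y,r,t}[-n-1]$, and one reduces uniqueness to showing
$$\Hom_{D^b((X/S)_t)}(Rf_*^t\T^t_r(n-c)_Z[-2c],\, i_*\nu^{n-1}_{Y,r,t}[-n-1]) = 0,$$
which can be approached via the adjunction $Rf^! \dashv Rf_*$, semi-purity (Proposition \ref{proposition_semi_purity_Tate_twists}), and cohomological dimension bounds, following the scheme of Sato's uniqueness argument in \cite[\S 7]{Sa07}. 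The hard part will be the factorization through the truncation outlined in the second paragraph: it essentially mirrors the intricate compatibility estimates that make Sato's Theorem 7.1.1 long, and likely requires either a full cohomology-sheaf analysis via the distinguished triangle or a complete transcription of Sato's cycle-class machinery to the tame site.
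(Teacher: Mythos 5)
You should first be aware that the paper does not actually prove this proposition: immediately before the statement it says that the proof for $p$-adic \'etale Tate twists is long and difficult (\cite[Thm.\ 7.1.1]{Sa07}) and that the adaptation to the tame site is left for the moment. So there is no proof in the paper to compare yours against; the statement is asserted as an expectation.

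Your sketch correctly locates the two genuine difficulties, but it closes neither. First, the factorization of $R\alpha_*\rho$ through $\tau_{\leq n}R\alpha_*\T_r(n)_X$: since $Rf_*^t\T^t_r(n-c)_Z[-2c]$ has cohomology up to degree $n+c$ plus the cohomological dimension of $f$, neither Lemma \ref{lemma_hom_D} nor Lemma \ref{lemma_uniqueness_D}(i) applies as stated --- both require the source to be concentrated in degrees $\leq n$, which is precisely the hypothesis that fails here. This is the same reason Sato's own proof of T7 cannot proceed by the soft argument used for T5 and T6 and instead runs through his cycle-class and trace machinery. Your alternative route via the triangle of Proposition \ref{proposition_distinguished_triangle} hits the identical wall: the lift through Lemma \ref{lemma_uniqueness_D}(i) needs $\Hom(Rf_*^t\T^t_r(n-c)_Z[-2c],\, i_*\nu^{n-1}_{Y,r,t}[-n-1])=0$, which is not a degree count for this source. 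Second, the uniqueness reduction requires that same Hom-vanishing, and the tools you invoke for it do not suffice as the paper stands: semi-purity in the tame setting is only the truncated statement of Proposition \ref{proposition_semi_purity_Tate_twists}, and the paper explicitly does not know full (untruncated) purity, which is what a duality and cohomological-dimension argument in Sato's style would lean on. In short, the outline has the right shape and you are candid that the hard part remains, but as written the proposal establishes neither existence nor uniqueness; it is a plan rather than a proof --- which, to be fair, is also all the paper offers for this statement.
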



\subsection{Some open problems}
We list some open problems for $p$-adic tame Tate twists.
\subsubsection*{The Gersten conjecture}
In this section let $X$ be a smooth scheme over $S$ of dimension $d$. 
\begin{conjecture}\label{conjecture_Gersten_tame_Tate_twists}
The complex of sheaves
$$0\to R^n\epsilon_*\cal T^t_r(m) \to \bigoplus_{x\in X^{(0)}}i_{*,x}H^{q}(k(x),\cal T^t_r(m))\to \bigoplus_{x\in X^{(1)}}i_{*,x}H^{q-1}(k(x),\cal T^t_r(m-1))\to $$
$$...\to \bigoplus_{x\in X^{(d)}}i_{*,x}H^{q-d}(x,\cal T^t_r(m-d))\to 0 $$
is exact in the Nisnevich topology.
\end{conjecture}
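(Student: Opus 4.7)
The natural strategy is a two-step reduction through the defining triangle of $\T^t_r(m)_X$, followed by a Bloch--Ogus style coniveau argument. First, I would split the problem using
\begin{equation*}
i_*\nu^{m-1}_{Y,r,t}[-m-1]\to \T^t_r(m)_X\to \tau_{\leq m}Rj_*\mu_{p^r}^{\otimes m}\to i_*\nu^{m-1}_{Y,r,t}[-m].
\end{equation*}
On the generic fibre $X_K$ the coefficients $\mu_{p^r}^{\otimes m}$ are invertible, so by Theorem \ref{theorem_tame_coh_l_coef} tame and \'etale cohomology agree and the classical Bloch--Ogus--Gabber Gersten resolution for $\mu_{p^r}^{\otimes m}$ on the smooth $X_K$ supplies the generic part. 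On the special fibre $Y$, which is smooth over $k$ since $X/S$ is assumed smooth, the required resolution is exactly Conjecture \ref{conjecture_Gersten_logdRW}; note that for $X$ smooth, $\nu^{m-1}_{Y,r,t}=W_r\Omega^{m-1}_{Y,\log,t}$.

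The second step is to patch these into a Gersten complex for $\T^t_r(m)_X$ itself. For this I would run the coniveau spectral sequence for tame cohomology, whose $E_1$-page takes the form
\[
E_1^{c,q-c}=\bigoplus_{x\in X^{(c)}}H^{q-c}_x(X^h_x,\T^t_r(m)_X)\Longrightarrow R^q\beta_*\T^t_r(m)_X,
\]
and use (semi-)purity, Proposition \ref{proposition_semi_purity_Tate_twists}, together with the Gysin identification to match each $E_1^{c,q-c}$ with $\bigoplus_{x\in X^{(c)}} H^{q-c}(k(x),\T^t_r(m-c)_{k(x)})$. The $E_1$-row in question is then the displayed complex, and exactness in positive coniveau is the Gersten claim proper: one must show the vanishing of $H^j_x(X^h_x,\T^t_r(m)_X)$ for $j<c$ at each codimension-$c$ point $x$. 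Following the pattern of Proposition \ref{proposition_Gersten_curve}, I would try to deduce this from Gabber's geometric presentation lemma (reducing Nisnevich-locally to an \'etale map to $\bb A^1$) combined with strict $\bb A^1$-invariance for tame cohomology with coefficients in $\T^t_r(m)$, plus the compatibility of Gysin/boundary maps with those of the two constituents of the distinguished triangle.

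The main obstacle is twofold. First, the matching of the $E_1$-terms with the stated ones requires \emph{full} purity $Ri^!\T^t_r(m)_X\simeq W_r\Omega^{m-c}_{Z,\log,t}[-m-c]$, whereas Proposition \ref{proposition_semi_purity_Tate_twists} only yields \emph{semi}-purity; removing the truncation is an open problem, tied to the work in progress by Koubaa. Second, the vanishing of $E_1^{c,q-c}$ for $q<c$ via the Gabber argument demands strict $\bb A^1$-invariance for $\T^t_r(m)$ in dimension $\geq 2$, while Proposition \ref{proposition_strict_A1invariance} handles only the line over a field and Theorem \ref{theorem_A1_invariance_tame_coh} cannot be invoked since $\T^t_r(m)$ is not locally constant. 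Consequently a complete proof will require both full tame purity for $\T^t_r(m)$ and a tame strict $\bb A^1$-invariance statement going beyond Proposition \ref{proposition_strict_A1invariance}; in the meantime the approach sketched here, combined with Proposition \ref{proposition_Gersten_curve}, should at least recover the curve case over $S$ unconditionally.
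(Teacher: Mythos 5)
The statement you are addressing is labelled a \emph{conjecture} in the paper, and the paper offers no proof of it: it only records that the analogous statement for $p$-adic \emph{\'etale} Tate twists is proved elsewhere (\cite{LuedersGerstenTatetwists}), and that the tame conjecture holds unconditionally in the range $q\leq m$ as a consequence of the Beilinson--Lichtenbaum isomorphism of Proposition \ref{proposition_BL} together with \cite[Thm.\ 1.1]{Geisser2004}. So there is no argument in the paper to compare yours against, and your own write-up is, by its own admission, a strategy rather than a proof. The two ingredients you flag as missing --- full (untruncated) purity $Ri^!\T^t_r(m)_X\simeq W_r\Omega^{m-c}_{Z,\log,t}[-m-c]$ in place of the semi-purity of Proposition \ref{proposition_semi_purity_Tate_twists}, and a strict $\bb A^1$-invariance statement for tame cohomology with $\T^t_r(m)$-coefficients beyond the one-dimensional case of Proposition \ref{proposition_strict_A1invariance} --- are exactly the open problems the paper itself isolates (Conjecture \ref{conjecture_purity} and the Question following Conjecture \ref{conjecture_Gersten_tame_Tate_twists}, where the author notes that $\bb A^1$-invariance would imply the Gersten conjecture ``by standard techniques''). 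Your coniveau-spectral-sequence outline is therefore a reasonable roadmap, consistent with the paper's stated expectations, but it does not establish the statement.

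Two further cautions. First, you omit the one unconditional partial result available: the exactness in the Beilinson--Lichtenbaum range $q\leq m$, which requires neither full purity nor $\bb A^1$-invariance and follows by transporting the known Gersten resolution for motivic cohomology through Proposition \ref{proposition_BL}; any honest account of what is currently provable should include this. Second, your closing claim that the approach ``should at least recover the curve case over $S$ unconditionally'' by combining with Proposition \ref{proposition_Gersten_curve} is too optimistic: that proposition concerns smooth curves over a field of characteristic $p$, i.e.\ the equicharacteristic special fibre, whereas a relative curve over the mixed-characteristic base $S$ is a two-dimensional arithmetic scheme. Patching the generic-fibre Bloch--Ogus resolution with the special-fibre resolution through the boundary map $\sigma_{X,r}(m)$ still requires the vanishing of local cohomology at closed points of $X$ itself and the compatibility of the Gysin and Kato boundary maps (property T4 transported to the tame site), none of which is supplied by Proposition \ref{proposition_Gersten_curve} alone.
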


This is the Gersten conjecture for $p$-adic tame Tate twists and is proved for $p$-adic \'etale Tate twists in \cite{LuedersGerstenTatetwists}. By the Beilinson-Lichtenbaum conjecture \ref{proposition_BL} and \cite[Thm. 1.1]{Geisser2004}, Conjecture \ref{conjecture_Gersten_tame_Tate_twists} holds for $n\leq m$.

\begin{question} 
Does the projection map $\pi:\bb A^1_X\to X$ induces a sequence of isomorphisms
$$H^m_t(X/S,\T_r^t(n))\xto{\cong}H^m_t(\bb A^1_X/S,\pi^*\T_r^t(n))\xto{\cong}H^m_t(\bb A^1_X/S,\T_r^t(n))?$$
\end{question}
A positive answer to this question would, by standard techniques, imply the Gersten conjecture.

\subsubsection*{The cohomology in top degree}

We fix the following notation. Let $A$ be a discrete valuation ring with local parameter $\pi$, perfect residue field $k$ of characteristic $p>0$ and fraction field $K$ of characteristic zero. Let $S=\Spec(A)$. Let $X=\Spec(B)$ be a smooth $S$-scheme. Let $I=(\pi)$ define the special fiber and $B^h$ be the henselisation of $B$ along $I$. Let $i:\Spec B/I\to \Spec B^h$ be the inclusion.

\begin{conjecture}\label{theorem_top_cohomology}
Let the notation be as above. Then $R\epsilon_* i^*\T^t_r(n)\in D((\Spec B/I)_{\rm Nis},\bb Z/p^r)$ is concentrated in degree $[0,n+1]$ and there is an isomorphism
$$H^{n+1}(\Spec B^h,\T^t_r(n))\cong H^{1}(\Spec B/I,W_r\Omega^n_{\log,t}).$$
\end{conjecture}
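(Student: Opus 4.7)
The strategy is to reduce the computation on the henselisation $\Spec B^h$ to one on the special fiber $Y := \Spec B/I$ via rigidity, and then to analyse the resulting complex using the defining triangle of $\T^t_r(n)_X$ together with a tame analogue of the Bloch--Kato--Hyodo computation of nearby cycles. First, I would invoke Conjecture \ref{conjecture_affine_anal_pbs}, extended from sheaves to bounded complexes, to obtain natural isomorphisms
\[
H^{s}((\Spec B^h)_{t}, \T^t_r(n)) \xrightarrow{\cong} H^{s}((\Spec B/I)_{t}, i^*\T^t_r(n))
\]
together with an analogous identification after applying $R\epsilon_*$. Applying the exact functor $i^*$ to the triangle of Proposition \ref{proposition_distinguished_triangle}, and using that $X$ is smooth over $S$ so that $\nu^{n-1}_{Y,r,t} = W_r\Omega^{n-1}_{Y,\log,t}$, one obtains on $Y$ the distinguished triangle
\[
W_r\Omega^{n-1}_{Y,\log,t}[-n-1] \to i^*\T^t_r(n)_X \to i^*\tau_{\leq n}Rj_*\mu_{p^r}^{\otimes n} \xrightarrow{\sigma} W_r\Omega^{n-1}_{Y,\log,t}[-n].
\]

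The core input is a tame version of the Bloch--Kato isomorphism: in the \'etale topology, $i^*R^qj_*\mu_{p^r}^{\otimes n}$ carries a filtration $U^{\bullet}$ whose graded pieces are controlled by $W_r\Omega^q_{Y,\log,\et}$, $W_r\Omega^{q-1}_{Y,\log,\et}$, and wild ramification terms built from $\Omega^{q-1}/d$ and $\Omega^{q}/d$. The expected tame analogue, which I would aim to establish, is that the wild pieces vanish after tame sheafification, leaving a short exact sequence
\[
0 \to W_r\Omega^{q-1}_{Y,\log,t} \to i^*R^qj_*\mu_{p^r}^{\otimes n} \to W_r\Omega^q_{Y,\log,t} \to 0
\]
for $q \le n$. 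Granted this, the hypercohomology spectral sequence for $i^*\tau_{\leq n}Rj_*\mu_{p^r}^{\otimes n}$, combined with the tame Beilinson--Lichtenbaum isomorphism (Proposition \ref{proposition_BL_dRW}) used to control the higher $R^s\epsilon_*W_r\Omega^{\bullet}_{Y,\log,t}$, should show that $R\epsilon_* i^*\tau_{\leq n}Rj_*\mu_{p^r}^{\otimes n}$ lies in cohomological degrees $\leq n+1$, and that its top cohomology sheaf is controlled by $R^1\epsilon_*W_r\Omega^n_{Y,\log,t}$.

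Plugging this back into the long exact sequence coming from the triangle above, and using the surjectivity of $\sigma$ on the top cohomology sheaves established in the proof of Proposition \ref{proposition_distinguished_triangle}, one obtains both the claimed concentration of $R\epsilon_* i^*\T^t_r(n)$ in $[0,n+1]$ and the target identification
\[
H^{n+1}(\Spec B^h, \T^t_r(n)) \cong H^1((\Spec B/I)_t, W_r\Omega^n_{\log,t}).
\]
The main obstacle will be the tame Bloch--Kato computation of nearby cycles, i.e.\ proving that the wild ramification subsheaves $U^s(R^qj_*\mu_{p^r}^{\otimes n})$ collapse after tame sheafification; this lies at the heart of the passage from the \'etale to the tame theory and appears to require the still-open purity Conjecture \ref{conjecture_purity}(ii) for tame logarithmic de Rham--Witt sheaves beyond dimension one. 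A secondary obstacle is Conjecture \ref{conjecture_affine_anal_pbs} itself, which substitutes for Gabber's rigidity theorem in the tame setting.
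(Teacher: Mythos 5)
This statement is labelled a \emph{Conjecture} in the paper, and the paper gives no proof of it: the only guidance offered is the remark following it, namely that the \'etale analogue is known by \cite{LuedersGerstenTatetwists} and that the restriction isomorphism $H^{s}(\Spec B^h,\T^t_r(n))\cong H^{s}(\Spec B/I,i^*\T^t_r(n))$ of Conjecture \ref{conjecture_affine_anal_pbs} ``would be the first step.'' Your opening move therefore matches exactly what the paper envisages, and your subsequent outline --- pull back the defining triangle of Proposition \ref{proposition_distinguished_triangle} along the exact functor $i^*$, analyse $i^*\tau_{\leq n}Rj_*\mu_{p^r}^{\otimes n}$ via the Bloch--Kato filtration on nearby cycles, and feed the result back into the long exact sequence --- is a plausible roadmap consistent with how the \'etale case is handled. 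But it is a roadmap, not a proof, and you say so yourself: the assertion that the wild subquotients $U^s$ of $i^*R^qj_*\mu_{p^r}^{\otimes n}$ die after tame sheafification, leaving only the two logarithmic de Rham--Witt graded pieces, is established nowhere in the paper or its references and is precisely the hard content of passing from the \'etale to the tame theory. Since both that step and Conjecture \ref{conjecture_affine_anal_pbs} remain open, the statement stays a conjecture under your argument.

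One further gap worth naming concretely: to get the concentration of $R\epsilon_*i^*\T^t_r(n)$ in degrees $[0,n+1]$ you invoke Proposition \ref{proposition_BL_dRW} to ``control the higher $R^s\epsilon_*W_r\Omega^{\bullet}_{Y,\log,t}$,'' but that proposition only identifies the truncation $\tau_{\leq q}$ of $R\beta_*W_r\Omega^q_{\log,t}[-q]$ with the motivic complex; it says nothing about $R^s\beta_*W_r\Omega^q_{\log,t}$ for $s\geq 2$, which is what the degree bound actually requires. The vanishing you need in those degrees is a consequence of the Gersten Conjecture \ref{conjecture_Gersten_logdRW} together with the fact that residue fields have tame $p$-cohomological dimension at most one (as in the proof of Proposition \ref{proposition_purity_curves}), not of Beilinson--Lichtenbaum; in the paper this is only available for curves (Proposition \ref{proposition_Gersten_curve}) or conditionally on strict $\bb A^1$-invariance. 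So even granting your tame Bloch--Kato input and the rigidity conjecture, the argument as written still rests on a third unproved conjecture that you do not flag.
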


\begin{remark} Conjecture \ref{theorem_top_cohomology} holds in the \'etale topology by \cite{LuedersGerstenTatetwists}.
By Conjecture \ref{conjecture_affine_anal_pbs}, the restriction map
$$H^{s}(\Spec B^h,\T^t_r(n))\xto\cong H^{s}(\Spec B/I,i^*\T^t_r(n))$$
is an isomorphism for all $s$ which would be the first step in proving Conjecture \ref{theorem_top_cohomology}.
\end{remark}

\section{The cycle class map}\label{section_cycle_class}
Let $S$ be the spectrum of a field or a discrete valuation ring and let $X$ be a scheme of finite type over $X$. We denote by 
$$\bb Z(n)_X/p^r$$
the sheafified (in the Zariski topology) version mod $p^r$ of Bloch's cycle complex \cite{Bl86}, studied by Levine in mixed characteristic in \cite{Le01}.

As an immediate application of the Beilinson-Lichtenbaum conjectures, proved for log deRham-Witt sheaves in Proposition \ref{proposition_BL_dRW} and for $p$-adic tame Tate twist in Proposition \ref{proposition_BL}, we obtain cycle class maps with $\bb Z/p^r$-coefficients to tame cohomology if $X$ is smooth over $S$.
In characteristic $p$ the composition
$$\bb Z(n)_X/p^r\xto{\cong}\tau_{\leq n}R\beta_*W_r\Omega^n_{X,\log,t}[-n]\to R\beta_*W_r\Omega^n_{X,\log,t}[-n]$$
induces a cycle class map
$$\CH^n(X,2n-m)/p^r\cong H^m(X,\bb Z(n)_X/p^r)\to H^m((X/S)_t,W_r\Omega^n_{X,\log,t}).$$
In mixed characteristic the composition
$$\bb Z(n)_X/p^r\xto{\cong}\tau_{\leq n}R\beta_*\T_r^t(n)_X\to R\beta_*\T_r^t(n)_X$$
induces a cycle class map
$$\CH^n(X,2n-m)/p^r\cong H^m(X,\bb Z(n)_X/p^r)\to H^m((X/S)_t,\T_r^t(n)_X).$$

\begin{remark}
We hope that a future construction of a morphism from Chow groups with modulus to compactly supported cohomology of $p$-adic tame Tate twists may be useful in studying higher dimensional tame local class field theory (for this see also \cite{gupta2022tame}).
\end{remark}



\bibliographystyle{acm}
\bibliography{Bibliografie}

\end{document}